\newtheorem{thm}{Theorem}[section]
\newtheorem{lemma}[thm]{Lemma}
\newtheorem{cor}[thm]{Corollary}
\newtheorem{prop}[thm]{Proposition}
\newtheorem{otherth}{\bf Theorem}
\newtheorem{otherl}{\bf Lemma}
\def\Bn{\mathbb{B}_n}
\def\Cn{\mathbb{C}^n}
\def\C{\mathbb{C}}
\def\a{\alpha}
\def\b{\beta}
\def\g{\gamma}
\def\e{\varepsilon}
\def\l{\lambda}
\def\p{\varphi}
\def\p{\varphi}
\numberwithin{equation}{section}
\begin{document}

\title[Carleson Measures and Toeplitz operators]
{Carleson Measures and Toeplitz operators for weighted Bergman spaces on the unit ball}

\author[J. Pau]
{Jordi Pau}
\address{
Jordi Pau\\
Departament de Matem\`atica Aplicada i Analisi,
Universitat de Barcelona,
08007 Barcelona,
Spain}
\email{jordi.pau@ub.edu}

\author[R. Zhao]
{Ruhan Zhao}
\address{
Ruhan Zhao\\
Department of Mathematics,
SUNY Brockport,
Brockport, NY 14420,
USA}
\email{rzhao@brockport.edu}


\keywords{Carleson measures, weighted Bergman spaces, Toeplitz operators,
integration operators}

\thanks{This work was completed while the second named author visited the University of Barcelona in 2013.
He thanks the support given by the IMUB and the SGR grant $2009$SGR $420$ (Generalitat de
Catalunya) from which the first author is also partially supported. The first author is also
 supported by DGICYT grant MTM$2011$-$27932$-$C02$-$01$
(MCyT/MEC)}

\begin{abstract}
Some new characterizations on Carleson measures
for weighted Bergman spaces on the unit ball
involving product of functions are obtained.
For these we characterize bounded and compact
Toeplitz operators between weighted Bergman spaces.
The above results are applied to characterize bounded and compact
extended Ces\`aro operators and pointwise multiplication operators.
The results are new even in the case of the unit disk.

\end{abstract}


\maketitle

\section{Introduction}
\label{intro}
Let $\Cn$ denote the Euclidean space of complex dimension $n$.
For any two points $z=(z_ 1,\dots,z_ n)$ and $w=(w_ 1,\dots,w_ n)$ in $\Cn$ we write
$\langle z,w\rangle =z_ 1\bar{w}_ 1+\dots +z_ n \bar{w}_ n,$
 and
 $|z|=\sqrt{\langle z,z\rangle}=\sqrt{|z_ 1|^2+\dots +|z_ n|^2}.$
Let $\Bn=\{z\in \C^n:|z|<1\}$ be the unit ball in $\C^n$.
Let $H(\Bn)$ be the space of all holomorphic functions on the
unit ball $\Bn$.
Let $dv$ be the normalized volume measure on $\Bn$ such that
$v(\Bn)=1$.
For $0<p<\infty$ and $-1<\a<\infty$, let
$L^{p,\a}:=L^{p}(\Bn,\,dv_{\a})$ denote the weighted Lebesgue spaces
which contain measurable functions $f$ on $\Bn$
such that
$$
\|f\|_{p,\a}=\left (\int_{\Bn}|f(z)|^p\,dv_{\a}(z)\right )^{1/p}<\infty,
$$
where $dv_{\a}(z)=c_{\a}(1-|z|^2)^{\a}\,dv(z)$, and $c_{\a}$ is
the normalized constant such that $v_{\a}(\Bn)=1$.
We also denote by $A^p_{\a}=L^{p}(\Bn,\,dv_{\a})\cap H(\Bn)$,
the weighted Bergman space on $\Bn$, with the same norm as above.
If $\a=0$, we simply write them as $L^p(\Bn,\,dv)$ and $A^p$ respectively
and $\|f\|_p$ for the norm of $f$ in these spaces.

Let $\mu$ be a positive Borel measure on $\Bn$. For $\lambda>0$ and $\a>-1$
we say $\mu$ is a \emph{$(\lambda,\a)$-Bergman Carleson measure} if
for any two positive numbers $p$ and $q$ with $q/p=\lambda$
there is a positive constant $C>0$ such that
$$
\int_{\Bn} |f(z)|^q\,d\mu(z)\le C\|f\|_{p,\a}^q
$$
for any $f\in A^p_{\a}$. We also denote by
$$
\|\mu\|_{\l,\a}=\sup_{f\in A^p_{\a}, \|f\|_{p,\a}\le 1}\int_{\Bn} |f(z)|^q\,d\mu(z).
$$

The concept of Carleson measures was first introduced by L. Carleson in order
to study interpolating sequences and the corona problem \cite{Car0,Car1}
on the algebra $H^\infty$ of all bounded analytic functions on the unit disk.
It quickly became a powerful tool for the study of function spaces and
operators acting on them. The Bergman Carleson
measures were first studied by Hastings \cite{H},
and further pursued by Oleinik \cite{Ol}, Luecking \cite{l0,l1},
Cima-Wogen \cite{CW}, and many others.

In this paper we will give new characterizations for
$(\l,\g)$-Bergman Carleson measures and vanishing
$(\l,\g)$-Bergman Carleson measures (defined in Section 4) on the unit ball $\Bn$
by using the products of functions in weighted Bergman spaces.
For proving these results, we have to characterize bounded and compact
Toeplitz operators between weighted Bergman spaces, which is
of independent interest.
Our results will be applied to study boundedness and compactness of
extended Ces\`aro operators and pointwise multiplication operators
from weighted Bergman spaces to a general family of function spaces.

\begin{thm}\label{carleson}
Let $\mu$ be a positive Borel measure on $\Bn$.
For any integer $k\ge 1$ and $i=1,2,...,k$,
let $0<p_i,q_i<\infty$ and $-1<\a_i<\infty$.
Let
\begin{equation}\label{Eqlg}
\l=\sum_{i=1}^k\frac{q_i}{p_i}; \qquad
\g=\frac1{\l}\sum_{i=1}^k\frac{\a _iq_i}{p_i}.
\end{equation}
Then $\mu$ is a $(\l,\g)$-Bergman Carleson measure
if and only if there is a constant $C>0$ such that
for any $f_i\in A^{p_i}_{\a_i}$, $i=1,2,..., k$,
\begin{equation}\label{eq1}
\int_{\Bn}\prod_{i=1}^k|f_i(z)|^{q_i}\,d\mu(z)
\le C\prod_{i=1}^k\|f_i\|_{p_i,\a_i}^{q_i}.
\end{equation}
\end{thm}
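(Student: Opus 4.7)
The plan is to prove the two implications separately. The key observation is that the two identities in \eqref{Eqlg}---namely $\sum_{i=1}^k q_i/p_i = \lambda$ and $\sum_{i=1}^k \alpha_i q_i/p_i = \lambda \gamma$---are precisely what make a H\"older-type splitting work in the forward direction and a normalized-kernel test calculation work in the converse. In both directions I will rely on the standard equivalent descriptions of a $(\lambda,\gamma)$-Bergman Carleson measure: the Bergman ball condition $\mu(B_r(a)) \lesssim (1-|a|^2)^{\lambda(n+1+\gamma)}$, and the Berezin-type condition $\sup_{a\in\Bn}\int_{\Bn}(1-|a|^2)^{t-\lambda(n+1+\gamma)}|1-\langle z,a\rangle|^{-t}\,d\mu(z) < \infty$ for $t$ sufficiently large.

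For the forward implication, given $f_i \in A^{p_i}_{\alpha_i}$, I write
\[
|f_i(z)|^{q_i} = \bigl[|f_i(z)|^{p_i}(1-|z|^2)^{\alpha_i}\bigr]^{q_i/p_i}\,(1-|z|^2)^{-\alpha_i q_i/p_i},
\]
and the constraint $\sum \alpha_i q_i/p_i = \lambda\gamma$ collects the residual weights into the single factor $(1-|z|^2)^{-\lambda\gamma}\,d\mu$. H\"older's inequality with exponents $s_i = \lambda p_i/q_i$ (which satisfy $\sum 1/s_i = 1$ by $\lambda = \sum q_i/p_i$) then yields
\[
\int_{\Bn}\prod_{i=1}^{k}|f_i|^{q_i}\,d\mu \le \prod_{i=1}^{k}\Bigl(\int_{\Bn}|f_i|^{\lambda p_i}\,d\mu_i\Bigr)^{q_i/(\lambda p_i)},
\]
where $d\mu_i(z) = (1-|z|^2)^{\lambda(\alpha_i-\gamma)}\,d\mu(z)$. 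The Bergman ball characterization applied to $\mu$ gives $\mu_i(B_r(a)) \asymp (1-|a|^2)^{\lambda(\alpha_i-\gamma)}\mu(B_r(a)) \lesssim (1-|a|^2)^{\lambda(n+1+\alpha_i)}$, so each $\mu_i$ is a $(\lambda,\alpha_i)$-Bergman Carleson measure. Invoking the definition then bounds each inner integral by $C\|f_i\|_{p_i,\alpha_i}^{\lambda p_i}$, and multiplying the factors recovers \eqref{eq1}.

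For the converse, I test \eqref{eq1} on kernel-type families $f_{i,a}(z) = (1-|a|^2)^{c-(n+1+\alpha_i)/p_i}(1-\langle z,a\rangle)^{-c}$ indexed by $a \in \Bn$, with one common parameter $c$ chosen large enough that the standard Forelli--Rudin integral estimate gives $\|f_{i,a}\|_{p_i,\alpha_i} \lesssim 1$ uniformly in $a$. Using \eqref{Eqlg}, the product becomes
\[
\prod_{i=1}^{k}|f_{i,a}(z)|^{q_i} = \frac{(1-|a|^2)^{cQ-\lambda(n+1+\gamma)}}{|1-\langle z,a\rangle|^{cQ}},\qquad Q := \sum_{i=1}^{k} q_i,
\]
and \eqref{eq1} reduces to exactly the Berezin-type condition with $t = cQ$ from the first paragraph. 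For $c$ sufficiently large this is equivalent to $\mu$ being a $(\lambda,\gamma)$-Bergman Carleson measure.

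The main obstacle is bookkeeping rather than conceptual depth: one must verify the algebraic identities that come out of \eqref{Eqlg} and invoke the correct characterization of Bergman Carleson measures with the precise exponents at hand, in particular the equivalence between the integral definition and the Bergman ball / Berezin-transform descriptions used above. One minor degenerate point is that when some $q_i/p_i$ equals $\lambda$ (forcing $k=1$), the corresponding H\"older exponent $s_i$ equals $1$; but in that case the theorem is tautological.
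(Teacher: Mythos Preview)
Your forward implication is essentially correct and in fact cleaner than the paper's iterative argument in Proposition~\ref{Car-P1}. One minor point: the Bergman-ball characterization $\mu(D(a,r))\lesssim (1-|a|^2)^{\lambda(n+1+\gamma)}$ that you invoke is Theorem~A, which is only valid for $\lambda\ge 1$. For $0<\lambda<1$ you need Theorem~B instead, but the computation goes through: one checks that $\widehat{(\mu_i)}_r\in L^{1/(1-\lambda),\alpha_i}$ reduces exactly to $\widehat{\mu}_r\in L^{1/(1-\lambda),\gamma}$. So this direction is fine after that small patch.

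The converse, however, has a genuine gap when $0<\lambda<1$. Testing \eqref{eq1} on the normalized kernels $f_{i,a}$ gives you
\[
\sup_{a\in\Bn}\int_{\Bn}\frac{(1-|a|^2)^{t}}{|1-\langle z,a\rangle|^{t+\lambda(n+1+\gamma)}}\,d\mu(z)<\infty,
\]
and you assert this is equivalent to $\mu$ being a $(\lambda,\gamma)$-Bergman Carleson measure. That equivalence is Theorem~A, and it is \emph{false} for $\lambda<1$: in that range the Carleson property is the strictly stronger $\ell^{1/(1-\lambda)}$ condition of Theorem~B, not a supremum condition. A single-parameter family of test functions can only produce $L^\infty$-type information about $\mu(D(a,r))$, never an $L^{1/(1-\lambda)}$ bound. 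So the kernel test simply does not close the loop when $\lambda<1$.

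The paper handles this case by a substantially different and much heavier argument (Proposition~\ref{PNT1}): induction on $k$, passing to the measure $d\mu_k=|f_k|^{q_k}\,d\mu$, and then invoking the key Lemma~\ref{ML}, whose proof in turn rests on the Toeplitz-operator characterization of Theorem~\ref{toeplitz1} (itself proved via Rademacher functions and Khinchine's inequality). This is exactly the ``missing idea'' your proposal lacks: extracting $\ell^{1/(1-\lambda)}$ information from \eqref{eq1} requires varying the test functions over a full sequence space and using duality/atomic-decomposition techniques, not just pointwise kernels.
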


A similar result for Hardy spaces on the unit disk
was given by the second author in \cite{zha3}.
Due to lack of Riesz factorization theorem for weighted Bergman spaces,
the proof of the above theorem will be quite different and involved.
For the proof of one implication in the case $0<\l<1$,
a description of bounded Toeplitz operators between different Bergman spaces is needed,
a result that can be of independent interest that we are going to state next.
Given $\b>-1$ and a positive Borel measure $\mu$ on $\Bn$,
define the Toeplitz operator $T_{\mu}^{\b}$ as follows:
$$
T_{\mu}^{\b}f(z)=\int_{\Bn}\frac{f(w)}{(1-\langle z, w \rangle )^{n+1+\b}}\,d\mu(w),\qquad z\in \Bn.
$$

\begin{thm}\label{toeplitz1} Let $0<p_1, p_2<\infty$, $-1<\a_1,\a_2<\infty$.
Suppose that
\begin{equation*}
n+1+\beta> n \max \big (1,\frac{1}{p_ i}\big )+\frac{1+\alpha_ i}{p_ i},\qquad i=1,2.
\end{equation*}
Let
$$
\l=1+\frac1{p_1}-\frac1{p_2},\quad
\g=\frac{1}{\l}\left(\b+\frac{\a_1}{p_1}-\frac{\a_2}{p_2}\right),
$$
Let $\mu$ be a positive Borel measure on $\Bn$. Then the following statements are equivalent:
\begin{itemize}
\item[(i)] $T_{\mu}^{\b}$ is bounded from $A^{p_1}_{\a_1}$ to $A^{p_2}_{\a_2}$.
\item[(ii)] The measure $\mu$ is a $(\l,\g)$-Bergman Carleson measure.
\end{itemize}
Moreover, one has
$$\|T_{\mu}^{\b}\|_{A^{p_1}_{\a_1} \rightarrow A^{p_2}_{\a_2}} \asymp \|\mu\|_{\l,\g}.$$
\end{thm}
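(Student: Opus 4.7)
The plan is to exploit the adjoint-like identity
\[
\int_{\Bn} T_\mu^\beta f(z)\,\overline{g(z)}\,dv_\beta(z) = \int_{\Bn} f(w)\,\overline{g(w)}\,d\mu(w),
\]
which follows from Fubini's theorem together with the fact that $(1-\langle z,w\rangle)^{-(n+1+\beta)}$ is, up to a constant, the reproducing kernel of $A^2_\beta$. This identity converts the boundedness of $T_\mu^\beta$ into a bilinear-form estimate for $(f,g)\mapsto\int f\bar g\,d\mu$ under the $dv_\beta$-pairing, which can then be analyzed by duality on one side and by H\"older plus the Carleson hypothesis on the other.

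For (ii)$\Rightarrow$(i), I would first identify the dual of $A^{p_2}_{\alpha_2}$ under the $dv_\beta$-pairing as $A^{p_2'}_\delta$ with $\delta=(p_2\beta-\alpha_2)/(p_2-1)$; the hypothesis $n+1+\beta>n+(1+\alpha_2)/p_2$ is precisely what forces $\delta>-1$, making this a genuine Bergman space. Reducing boundedness to the bilinear estimate $|\int f\bar g\,d\mu|\le C\|f\|_{p_1,\alpha_1}\|g\|_{p_2',\delta}$, I would split the weight as $\gamma = \alpha_1/(p_1\lambda)+\delta/(p_2'\lambda)$ and apply H\"older with conjugate exponents $r=p_1\lambda$ and $r'=p_2'\lambda$ (whose conjugacy follows from $\lambda=1/p_1+1/p_2'$) to obtain
\[
\|fg\|_{1/\lambda,\gamma}\le C\,\|f\|_{p_1,\alpha_1}\|g\|_{p_2',\delta}.
\]
The Carleson hypothesis applied with $q=1$ and $p=1/\lambda$ to the holomorphic function $fg$ then gives $\int|fg|\,d\mu\le\|\mu\|_{\lambda,\gamma}\|fg\|_{1/\lambda,\gamma}$, closing this direction.

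For (i)$\Rightarrow$(ii), I would test against the standard family of normalized kernels
\[
f_a(z)=\frac{(1-|a|^2)^t}{(1-\langle z,a\rangle)^{(n+1+\alpha_1)/p_1+t}},\qquad a\in\Bn,
\]
with $t$ sufficiently large so that $\|f_a\|_{p_1,\alpha_1}\asymp 1$ uniformly in $a$ (by Forelli--Rudin). On a pseudohyperbolic ball $D(a,r)$ of fixed small radius, the integrand defining $T_\mu^\beta f_a$ admits a pointwise lower bound proportional to $(1-|a|^2)^{-(n+1+\beta)-(n+1+\alpha_1)/p_1}\mu(D(a,r))$, and integrating the $p_2$-th power against $dv_{\alpha_2}$ over $D(a,r)$ yields the geometric bound
\[
\mu(D(a,r))\lesssim (1-|a|^2)^{(n+1+\beta)+(n+1+\alpha_1)/p_1-(n+1+\alpha_2)/p_2}=(1-|a|^2)^{(n+1+\gamma)\lambda},
\]
equivalent to the $(\lambda,\gamma)$-Carleson condition when $\lambda\ge 1$.

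The main obstacle is the quasi-Banach range $0<p_2\le 1$, where the clean duality $(A^{p_2}_{\alpha_2})^*\cong A^{p_2'}_\delta$ is unavailable and the H\"older step above breaks down (the would-be exponent $r=p_1\lambda$ drops below $1$); in that regime I would replace duality by Bloch-type duality when $p_2=1$ or, for $p_2<1$, by an atomic-decomposition argument combined with the $p_2$-subadditivity $\|\sum h_k\|_{p_2,\alpha_2}^{p_2}\le\sum\|h_k\|_{p_2,\alpha_2}^{p_2}$ and pointwise Forelli--Rudin estimates. A second subtlety appears in the necessity direction when $\lambda<1$, since a single test kernel only produces the geometric bound while $(\lambda,\gamma)$-Carleson is strictly stronger there; I would recover the requisite $L^s$-averaging characterization by testing $T_\mu^\beta$ against random sign superpositions $\sum\varepsilon_k c_k f_{a_k}$ over a lattice in $\Bn$ and invoking Khintchine's inequality on the output.
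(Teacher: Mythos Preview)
Your outline matches the paper's proof almost step for step: duality plus H\"older/Carleson for (ii)$\Rightarrow$(i) when $p_2>1$, lattice decomposition with $p_2$-subadditivity and pointwise kernel estimates when $0<p_2<1$, the kernel test for (i)$\Rightarrow$(ii) when $\lambda\ge 1$, and Khintchine over random sign superpositions of lattice atoms when $\lambda<1$.

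The one point that does not go through as written is the case $p_2=1$ in (ii)$\Rightarrow$(i). Bloch-type duality reduces the problem to bounding $\bigl|\int_{\Bn} f\bar g\,d\mu\bigr|$ by $\|f\|_{p_1,\alpha_1}$ times the Bloch norm of $g$, but then you can no longer feed $fg$ into the Carleson hypothesis as in the $p_2>1$ case: the H\"older step that split the weight $\gamma$ between $f$ and $g$ relied on $g$ lying in a genuine weighted Bergman space $A^{p_2'}_\delta$ with the \emph{correct} exponent $p_2'\lambda$ conjugate to $p_1\lambda$, whereas a Bloch function only satisfies $|g(z)|\lesssim\log\frac{e}{1-|z|^2}$, and this logarithm does not absorb into a polynomial weight to recover $\|f\|_{p_1,\alpha_1}$ on the nose. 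The paper sidesteps duality altogether at $p_2=1$: Fubini plus the Forelli--Rudin estimate give directly
\[
\|T_\mu^\beta f\|_{1,\alpha_2}\lesssim\int_{\Bn}|f(w)|\,(1-|w|^2)^{\alpha_2-\beta}\,d\mu(w),
\]
and one checks via Theorems~\ref{TA} and~\ref{TB} that $d\nu(w)=(1-|w|^2)^{\alpha_2-\beta}\,d\mu(w)$ is a $(1/p_1,\alpha_1)$-Carleson measure whenever $\mu$ is $(\lambda,\gamma)$-Carleson, with comparable norms. This is both simpler and avoids the logarithmic loss.
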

\mbox{}
\\
\emph{Remark:}
In Theorem \ref{toeplitz1}, the condition
\begin{equation}\label{B-C1}
n+1+\beta> n \max \big (1,\frac{1}{p_ 1}\big )+\frac{1+\alpha_ 1}{p_ 1}
\end{equation}
 is used to prove the implication (i) implies (ii), while the condition
 \begin{equation}\label{B-C2}
n+1+\beta> n \max \big (1,\frac{1}{p_ 2}\big )+\frac{1+\alpha_ 2}{p_ 2}
\end{equation}
is needed to prove the other implication (ii) implies (i). Moreover, when $p_ 1\ge 1$, then
\eqref{B-C1} reduces to
$(1+\beta) p_ 1 >1+\alpha_ 1,$
and by Theorem 2.11 of Zhu's book \cite{ZhuBn}, this is equivalent to the fact that $P_{\beta}$ is a bounded projection from $L^{p_ 1}(\Bn,dv_{\alpha _ 1})$ onto $A^{p_ 1}_{\alpha _ 1}$. Here, the projection $P_{\beta}$ is defined as
$$P_{\beta} f(z)=\int_{\Bn} \frac{ f(w)\,dv_{\beta}(w)}{(1-\langle z,w \rangle )^{n+1+\beta}}.$$
In a similar way, when $p_ 2\ge 1$, the condition \eqref{B-C2} is equivalent to the fact that $P_{\beta}$ is a bounded projection from $L^{p_ 2}(\Bn,dv_{\alpha _ 2})$ onto $A^{p_ 2}_{\alpha _ 2}$. \\

The paper is organized as follows. In Section 2 we recall some notations
and preliminary results which will be used later.
Section 3 is devoted to the proofs of our main results,
Theorem~\ref{carleson} and Theorem~\ref{toeplitz1}.
In Section 4 we give similar characterizations for vanishing
$(\lambda,\gamma)$-Bergman Carleson measures.
In Section 5 we apply Theorem~\ref{carleson} to
characterize bounded extended Ces\`aro operators
and pointwise multiplication operators from
weighted Bergman spaces into a general family of function spaces.

In the following, the notation $A\lesssim B$ means
that there is a positive constant $C$ such that $A\leq CB$,
and the notation $A\asymp B$ means that both $A\lesssim B$ and $B\lesssim A$ hold.

\section{Preliminaries}
\label{b-carleson}

In this section we introduce some notations and recall some well known results
that will be used throughout the paper.

For any $a\in \Bn$ with $a\neq 0$, we denote by
$\p_a(z)$ the M\"obius transformation on $\Bn$
that interchanges the points $0$ and $a$.
It is known that $\p_a$ satisfies the following
properties: $\p_a\circ\p_a(z)=z$, and
\begin{equation}\label{eq-pa}
1-|\p_a(z)|^2=\frac{(1-|a|^2)(1-|z|^2)}{|1-\langle z,a\rangle|^2}, \qquad a,z\in \Bn.
\end{equation}
For $z,w\in\Bn$, the \emph{pseudo-hyperbolic distance} between $z$ and $w$
is defined by
$$
\rho(z,w)=|\p_z(w)|,
$$
and the \emph{hyperbolic distance} on $\Bn$ between $z$ and $w$
induced by the Bergman metric is given by
$$
\beta(z,w)=\tanh\,\rho(z,w)=\frac12\log\frac{1+|\p_z(w)|}{1-|\p_z(w)|}.
$$
For $z\in\Bn$ and $r>0$, the \emph{Bergman metric ball} at $z$ is given by
$$
D(z,r)=\big \{w\in\Bn:\,\beta(z,w)<r \big \}.
$$
It is known that, for a fixed $r>0$, the weighted volume
$$
v_{\a}(D(z,r))\asymp (1-|z|^2)^{n+1+\a}.
$$
We refer to \cite{ZhuBn} for the above facts.

We cite two results for Bergman Carleson measures which justify
the fact that a Bergman Carleson measure depends only on
$\a$ and the ratio $\lambda=q/p$.
The first result was obtained by several authors and can be found,
for example, in \cite[Theorem 50]{zz} and the references there.

\begin{otherth}\label{TA}
For a positive Borel measure $\mu$ on $\Bn$,
$0<p\le q<\infty$, and $-1<\a<\infty$,
the following statements are equivalent:
\begin{itemize}
\item[(i)] There is a constant $C_1>0$ such that for any $f\in A^p_{\a}$
$$
\int_{\Bn}|f(z)|^q\,d\mu(z)\le C_1\|f\|_{p,\a}^q.
$$
\item[(ii)] There is a constant $C_2>0$ such that,
for any real number $r$ with $0<r<1$ and any $z\in \Bn$,
$$
\mu(D(z,r))\le C_2(1-|z|^2)^{(n+1+\a)q/p}.
$$
\item[(iii)] There is a constant $C_3>0$ such that,
for some (every) $t>0$,
$$
\sup_{a\in\Bn}\int_{\Bn}\frac{(1-|a|^2)^t}{|1-\langle z, a \rangle |^{[{(n+1+\a)q/p}]+t}}\,d\mu(z)\le C_3.
$$
\end{itemize}
Furthermore, the constants $C_1$, $C_2$ and $C_3$ are all comparable
to $\|\mu\|_{\l,\a}$ with $\l=q/p$.
\end{otherth}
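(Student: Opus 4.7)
The plan is to prove (i)$\Rightarrow$(ii), (ii)$\Leftrightarrow$(iii), and (ii)$\Rightarrow$(i), so that all three conditions are equivalent with (ii) as the geometric pivot. Write $\l=q/p$ and $N=\l(n+1+\a)$ for brevity.

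For (i)$\Rightarrow$(ii), the natural test family is
$$f_a(z)=\frac{(1-|a|^2)^s}{(1-\langle z,a\rangle)^{(n+1+\a)/p+s}},\qquad a\in\Bn,$$
with any $s>0$, so that the Forelli--Rudin integral estimates \cite[Theorem 1.12]{ZhuBn} give $\|f_a\|_{p,\a}\asymp 1$ uniformly in $a$. Since $|1-\langle z,a\rangle|\asymp 1-|a|^2$ on the Bergman ball $D(a,r)$, one has $|f_a(z)|^q\gtrsim (1-|a|^2)^{-q(n+1+\a)/p}$ there. Inserting $f_a$ into (i) and restricting the left-hand integral to $D(a,r)$ yields (ii) with $C_2\lesssim C_1$.

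For (ii)$\Leftrightarrow$(iii), choose an $r$-lattice $\{a_k\}$ in the Bergman metric so that $\{D(a_k,r)\}$ covers $\Bn$ with bounded multiplicity and $\{D(a_k,r/5)\}$ is pairwise disjoint. Since $|1-\langle z,a\rangle|\asymp|1-\langle a_k,a\rangle|$ on $D(a_k,r)$ and (by (ii)) $\mu(D(a_k,r))\lesssim (1-|a_k|^2)^N$, one obtains
$$\int_{\Bn}\frac{(1-|a|^2)^t\,d\mu(z)}{|1-\langle z,a\rangle|^{N+t}}\lesssim \sum_k\frac{(1-|a|^2)^t(1-|a_k|^2)^{N}}{|1-\langle a_k,a\rangle|^{N+t}}.$$
Using $(1-|a_k|^2)^{n+1}\asymp v(D(a_k,r))$, a standard discretization dominates this sum by
$$\int_{\Bn}\frac{(1-|w|^2)^{N-(n+1)}(1-|a|^2)^t}{|1-\langle w,a\rangle|^{N+t}}\,dv(w).$$
Since $\l\ge 1$ and $\a>-1$ force $N-(n+1)>-1$, choosing $t>0$ Forelli--Rudin bounds this integral uniformly in $a$, yielding (iii). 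The converse (iii)$\Rightarrow$(ii) follows by restricting the integral in (iii) to $D(a,r)$ and using $|1-\langle z,a\rangle|\asymp 1-|a|^2$ there.

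For (ii)$\Rightarrow$(i), with the same lattice split
$$\int_{\Bn}|f|^q\,d\mu\le\sum_k\mu(D(a_k,r))\sup_{D(a_k,r)}|f|^q.$$
The sub-mean value property for $|f|^p$ on a slightly enlarged ball $D(a_k,R)$ together with $v_\a(D(a_k,R))\asymp (1-|a_k|^2)^{n+1+\a}$ gives
$$\sup_{D(a_k,r)}|f|^q\lesssim \frac{1}{(1-|a_k|^2)^{(n+1+\a)q/p}}\Bigl(\int_{D(a_k,R)}|f|^p\,dv_\a\Bigr)^{q/p}.$$
Combining with (ii) the powers of $1-|a_k|^2$ cancel exactly; since $q/p\ge 1$, the elementary inequality $\sum_k x_k^{q/p}\le (\sum_k x_k)^{q/p}$ together with the bounded overlap of $\{D(a_k,R)\}$ delivers (i) with $C_1\lesssim C_2$. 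The main technical obstacles are bookkeeping the exponents so that the Forelli--Rudin integrals converge (which pins down the admissible ranges of $s$ and $t$) and the essential use of $p\le q$ in the final power-sum step: if $q<p$ the $\ell^{q/p}$-summation runs the wrong way and this argument breaks, which is precisely why the theorem is restricted to $p\le q$.
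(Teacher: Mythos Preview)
The paper does not actually prove this statement: Theorem~A is quoted as a known result from the literature (specifically \cite[Theorem 50]{zz}) and is used as a tool, so there is no ``paper's own proof'' to compare against. That said, your argument is correct and is essentially the standard proof one finds in the cited references: test functions of the form $(1-|a|^2)^s/(1-\langle z,a\rangle)^{(n+1+\a)/p+s}$ for (i)$\Rightarrow$(ii), discretization over a Bergman-metric lattice plus the Forelli--Rudin estimate for (ii)$\Leftrightarrow$(iii), and the sub-mean-value inequality together with $\ell^{q/p}\hookrightarrow\ell^1$ for (ii)$\Rightarrow$(i). Your bookkeeping of exponents is accurate --- in particular, the observation that $\l\ge 1$ forces $N-(n+1)\ge \a>-1$ so the Forelli--Rudin integral converges, and that the $\ell^{q/p}$ step genuinely requires $p\le q$. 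Nothing is missing.
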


\textbf{Remark.}
Let $\lambda=q/p$. Then the above result states that
a positive Borel measure $\mu$ on $\Bn$
is a $(\lambda,\a)$-Bergman Carleson measure if and only if
$$
\sup_{s\in\Bn}\int_{\Bn}\frac{(1-|a|^2)^t}
{|1-\langle z, a \rangle |^{(n+1+\a)\lambda+t}}\,d\mu(z)<\infty.
$$
 for some (every) $t>0$.

For the case $0<q<p<\infty$, we need a well-known
result on decomposition of the unit ball $\Bn$.
A sequence $\{a_k\}$ of points in $\Bn$ is called
a \emph{separated sequence} (in the Bergman metric)
if there exists a positive constant $\delta>0$ such that
$\beta(z_i,z_j)>\delta$ for any $i\neq j$.
The following result is Theorem 2.23 in \cite{ZhuBn}.

\begin{otherl}\label{covering}
There exists a positive integer $N$ such that for any $0<r<1$ we can
find a sequence $\{a_k\}$ in $\Bn$ with the following properties:
\begin{itemize}
\item[(i)] $\Bn=\cup_{k}D(a_k,r)$.
\item[(ii)] The sets $D(a_k,r/4)$ are mutually disjoint.
\item[(iii)] Each point $z\in\Bn$ belongs to at most $N$ of the sets $D(a_k,4r)$.
\end{itemize}
\end{otherl}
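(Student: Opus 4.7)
My plan is to take a maximal $(r/2)$-separated set in the Bergman metric $\beta$ and verify the three properties. The key device for obtaining a single constant $N$ that works uniformly in $r\in(0,1)$ is to measure the packing with the M\"obius-invariant volume $d\tau(z)=(1-|z|^2)^{-(n+1)}\,dv(z)$, under which every Bergman ball $D(a,r)$ has mass depending only on $r$.

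\emph{Construction.} Applying Zorn's lemma, pick $\{a_k\}\subset\Bn$ maximal with respect to $\beta(a_i,a_j)\ge r/2$ for $i\ne j$. Property (ii) is immediate, since any point of $D(a_i,r/4)\cap D(a_j,r/4)$ would force $\beta(a_i,a_j)<r/2$ by the triangle inequality for $\beta$. Property (i) follows from maximality: no $z\in\Bn$ can be added to the sequence, so $\beta(z,a_k)<r/2$ for some $k$, and hence $z\in D(a_k,r)$.

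\emph{Packing estimate for (iii).} Combining \eqref{eq-pa} with the well-known Jacobian identity $|\det\p_a'(w)|^2=\bigl((1-|a|^2)/|1-\langle w,a\rangle|^2\bigr)^{n+1}$ shows that $d\tau$ is invariant under each $\p_a$, and a polar computation using the substitution $|w|=\tanh t$ then gives $\tau(D(a,r))=\tau(D(0,r))=C_n\sinh^{2n}(r)$. Now fix $z\in\Bn$ and set $K=\{k:z\in D(a_k,4r)\}$. The triangle inequality gives $D(a_k,r/4)\subset D(z,17r/4)$ for every $k\in K$, and the disjointness in (ii) yields
\begin{equation*}
|K|\,\sinh^{2n}(r/4)\;\le\;\sinh^{2n}(17r/4).
\end{equation*}

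\emph{Uniform bound---the main obstacle.} What remains is to bound this ratio by a constant independent of $r$; this is precisely the step where the hypothesis $0<r<1$ is essential and where a naive Euclidean-volume argument would produce an $r$-dependent constant. On the bounded interval $(0,17/4]$ the function $t\mapsto\sinh(t)/t$ is increasing, so for $r\in(0,1)$ one has $\sinh(17r/4)\le\sinh(17/4)\cdot r$, while $\sinh(r/4)\ge r/4$; therefore $\sinh(17r/4)/\sinh(r/4)\le 4\sinh(17/4)$. Consequently one may fix $N:=\bigl(4\sinh(17/4)\bigr)^{2n}$ at the outset, depending only on the dimension $n$, and this same $N$ serves for every choice of $r\in(0,1)$, as required.
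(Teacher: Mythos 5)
Your proof is correct. The paper itself gives no proof of this lemma --- it is quoted as Theorem 2.23 of Zhu's book \cite{ZhuBn} --- and your argument (a maximal $(r/2)$-separated set, with the packing bound for (iii) obtained from the M\"obius-invariant measure $d\tau(z)=(1-|z|^2)^{-(n+1)}\,dv(z)$ and the uniform bound on $\sinh^{2n}(17r/4)/\sinh^{2n}(r/4)$ for $0<r<1$) is essentially the standard proof found in that reference. The only cosmetic fixes are to take $N=\lceil (4\sinh(17/4))^{2n}\rceil$ so that $N$ is actually an integer, and to note that the maximal separated family is automatically countable, since the balls $D(a_k,r/4)$ are disjoint nonempty open sets and $\Bn$ is separable.
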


Any sequence $\{a_k\}$ satisfying the conditions of the above lemma is called
a \emph{lattice} (or an $r$-$lattice$ if one wants to stress the dependence on $r$)
in the Bergman metric. Obviously any $r$-lattice is separated.
For convenience, we will denote by $D_k=D(a_k,r)$ and $\tilde{D}_k=D(a_k,4r)$.
Then Lemma~\ref{covering} says that $\Bn=\cup_{k=1}^{\infty}D_k$
and there is an positive integer $N$ such that every point $z$ in $\Bn$
belongs to at most $N$ of sets $\tilde{D}_k$.

The following result is essentially due to D. Luecking (\cite{l2} and \cite{l3}),
for the case $\a=0$ (note that the discrete form (iii) is actually given in
Luecking's proof).
For $-1<\a<\infty$, the result can be similarly proved as in \cite{l3}.
The condition in part (iv) first appeared in \cite{CKY} (see also \cite[Theorem 54]{zz}),
where it was used for the embedding of harmonic Bergman spaces into Lebesgue spaces.

\begin{otherth}\label{TB}
 For a positive Borel measure $\mu$ on $\Bn$,
$0<q<p<\infty$ and $-1<\a<\infty$,
the following statements are equivalent:
\begin{itemize}
\item[(i)] There is a constant $C_1>0$ such that for any $f\in A^p_{\a}$
$$
\int_{\Bn}|f(z)|^q\,d\mu(z)\le C_1\|f\|_{p,\a}^q.
$$
\item[(ii)] The function
$$
\widehat{\mu}_ r(z):=\frac{\mu(D(z,r))}{(1-|z|^2)^{n+1+\a}}
$$
is in $L^{p/(p-q),\a}$ for any (some)
fixed $r\in(0,1)$.
\item[(iii)] For any $r$-lattice $\{a_k\}$ and $D_k$ as in Lemma~\ref{covering},
the sequence
$$
\{\mu_ k\}:=\left\{\frac{\mu(D_k)}{(1-|a_k|^2)^{(n+1+\a)\frac{q}{p}}}\right \}
$$
belongs to $\ell^{p/(p-q)}$ for any (some)
fixed $r\in(0,1)$.
\item[(iv)] For any $s>0$, the Berezin-type transform $B_{s,\a} (\mu)$ belongs to $L^{p/(p-q),\a}$.
\end{itemize}
Furthermore, with $\lambda=q/p$, one has
$$\|\widehat{\mu}_ r\|_{\frac{p}{p-q},\a} \asymp \|\{\mu_ k\}\|_{\ell^{p/(p-q)}}\asymp \|B_{s,\a} (\mu)\|_{\frac{p}{p-q},\a} \asymp \|\mu\|_{\lambda,\alpha}.$$
\end{otherth}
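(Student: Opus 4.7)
The plan is to verify the easy equivalences (ii) $\Leftrightarrow$ (iii) $\Leftrightarrow$ (iv) via pointwise comparisons and the covering Lemma~\ref{covering}, then close the circle through (iii) $\Rightarrow$ (i) (subharmonicity plus H\"older) and (i) $\Rightarrow$ (iii) (a Luecking-type randomization argument). Tracking constants along the way will yield the stated norm equivalences.

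For (ii) $\Leftrightarrow$ (iii), for $z \in D_ k$ one has $1-|z|^2 \asymp 1-|a_ k|^2$ and $\mu(D(z,r))\asymp \mu(D(a_ k,r'))$ for a comparable radius $r'$, while $v_{\alpha}(D_ k)\asymp (1-|a_ k|^2)^{n+1+\alpha}$; a direct calculation gives
\begin{equation*}
\int_{D_ k}\widehat{\mu}_ r(z)^{p/(p-q)}\,dv_{\alpha}(z)\asymp \mu_ k^{p/(p-q)}.
\end{equation*}
Summing over $k$ with the bounded overlap of the $\tilde{D}_ k$ yields $\|\widehat{\mu}_ r\|_{p/(p-q),\alpha}\asymp \|\{\mu_ k\}\|_{\ell^{p/(p-q)}}$. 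For (ii) $\Leftrightarrow$ (iv), restricting the defining integral of $B_{s,\alpha}(\mu)(z)$ to $D(z,r)$, where $|1-\langle z,w\rangle|\asymp 1-|z|^2$, gives the pointwise bound $B_{s,\alpha}(\mu)(z)\gtrsim \widehat{\mu}_ r(z)$; in the reverse direction one decomposes $\mu$ over the lattice to estimate $B_{s,\alpha}(\mu)(z)\lesssim \sum_ k \mu(D_ k)(1-|a_ k|^2)^s/|1-\langle z,a_ k\rangle|^{n+1+\alpha+s}$, and the required $L^{p/(p-q),\alpha}$ bound follows from a Forelli--Rudin type integral estimate.

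For (iii) $\Rightarrow$ (i), I would bound
\begin{equation*}
\int_{\Bn}|f|^q\,d\mu\le \sum_ k \mu(D_ k)\,\sup_{z\in D_ k}|f(z)|^q,
\end{equation*}
use the standard subharmonic estimate $\sup_{D_ k}|f|^p\lesssim (1-|a_ k|^2)^{-(n+1+\alpha)}\int_{\tilde{D}_ k}|f|^p\,dv_{\alpha}$, raise to power $q/p$, and apply H\"older with conjugate exponents $p/q$ and $p/(p-q)$. This splits the sum into $\|\{\mu_ k\}\|_{\ell^{p/(p-q)}}$ and a remainder controlled by $\bigl(\sum_ k \int_{\tilde{D}_ k}|f|^p\,dv_{\alpha}\bigr)^{q/p}\lesssim \|f\|_{p,\alpha}^q$ via bounded overlap.

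The main obstacle is (i) $\Rightarrow$ (iii), which requires Luecking's randomization method. I would test (i) on
\begin{equation*}
f_{\epsilon}(z)=\sum_ k \epsilon_ k\,c_ k\,\frac{(1-|a_ k|^2)^{b}}{(1-\langle z,a_ k\rangle)^{c}},
\end{equation*}
where $\{\epsilon_ k\}$ is a Rademacher sequence and $b,c$ are calibrated via an atomic-decomposition-type bound so that $\|f_{\epsilon}\|_{p,\alpha}\lesssim \|\{c_ k\}\|_{\ell^p}$ uniformly in $\epsilon$. Averaging the inequality from (i) over $\epsilon$ and invoking Khinchine's inequality produces
\begin{equation*}
\int_{\Bn}\Big(\sum_ k |c_ k|^2\,\frac{(1-|a_ k|^2)^{2b}}{|1-\langle z,a_ k\rangle|^{2c}}\Big)^{q/2}\,d\mu(z)\lesssim \Big(\sum_ k|c_ k|^p\Big)^{q/p}.
\end{equation*}
Restricting the $z$-integral to each $D_ j$, where $|1-\langle z,a_ k\rangle|\asymp |1-\langle a_ j,a_ k\rangle|$, turns this into a discrete inequality for sequences; optimizing $\{c_ k\}$ through the $\ell^{p/(p-q)}$--$\ell^{p/q}$ duality pairing against $\{\mu_ k\}$ and controlling the off-diagonal kernel matrix then forces $\{\mu_ k\}\in \ell^{p/(p-q)}$. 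The delicate point is calibrating the exponents $b,c$ so that both the $A^{p}_{\alpha}$-norm estimate on $f_{\epsilon}$ is uniform in $\epsilon$ and the kernel matrix acts boundedly on the relevant sequence space; the case $p<2$ additionally requires modifying the argument (e.g., via duality or an interpolation-type step) since Khinchine's two-sided control degenerates there.
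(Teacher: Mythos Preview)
The paper does not give its own proof of Theorem~B: it is quoted as a known result essentially due to Luecking \cite{l2,l3} (with part (iv) attributed to \cite{CKY}), so there is no in-paper argument to compare against. Your outline is precisely the Luecking scheme, and the ingredients you list for (ii)$\Leftrightarrow$(iii)$\Leftrightarrow$(iv) and for (iii)$\Rightarrow$(i) are the correct ones.

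Two corrections on your sketch of (i)$\Rightarrow$(iii). First, the concern about $p<2$ is unfounded: Khinchine's inequality $\mathbb{E}\bigl|\sum_k \epsilon_k a_k\bigr|^q \asymp \bigl(\sum_k|a_k|^2\bigr)^{q/2}$ holds for every $0<q<\infty$, and the atomic-decomposition estimate $\|f_{\epsilon}\|_{p,\alpha}\lesssim \|\{c_k\}\|_{\ell^p}$ (Theorem~2.30 of \cite{ZhuBn}) is valid for every $0<p<\infty$; nothing degenerates and no separate treatment is needed. Second, no off-diagonal kernel control is required. After Khinchine, restrict the $z$-integral to $D_j$ and retain only the single term $k=j$ in the square function (legitimate since $t\mapsto t^{q/2}$ is increasing); using $|1-\langle z,a_j\rangle|\asymp 1-|a_j|^2$ on $D_j$ and the finite overlap of the $D_j$, summation over $j$ yields
\[
\sum_j |c_j|^q\,\frac{\mu(D_j)}{(1-|a_j|^2)^{(n+1+\alpha)q/p}}\ \lesssim\ \|\{c_k\}\|_{\ell^p}^q,
\]
and $\ell^{p/q}$--$\ell^{p/(p-q)}$ duality gives (iii) directly. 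This diagonal mechanism is exactly what the paper itself employs later, in the proof of Theorem~\ref{toeplitz1} (Case~2 of (i)$\Rightarrow$(ii)); the detour through a bounded ``off-diagonal kernel matrix'' is unnecessary.
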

\mbox{}
\\
Here, for a positive measure $\nu$, the Berezin-type transform $B_{s,\alpha}(\nu)$ is
$$
B_{s,\alpha}(\nu)(z)=\int_{\Bn} \frac{(1-|z|^2)^s}{|1-\langle z,w \rangle |^{n+1+s+\alpha}} \,d\nu(w).
$$
As a consequence of the previous stated result, for $0<\l<1$,
a positive Borel measure $\mu$ on $\Bn$ is a
$(\lambda,\a)$-Bergman Carleson measure if and only if
$$
\mu(D(z,r))(1-|z|^2)^{-n-1-\a}\in L^{1/(1-\lambda),\a},
$$
 or
$$
\{\mu(D_k)(1-|a_k|^2)^{-(n+1+\a)\lambda}\}\in \ell^{1/(1-\lambda)}
$$
for any (some) fixed $r\in(0,1)$.
\\

The following integral estimate (see \cite[Theorem 1.12]{ZhuBn}) has become
indispensable in this area of analysis, and will be used
several times in this paper.

\begin{otherl}\label{Ict}
Suppose $z\in\Bn$, $c>0$  and $t>-1$. The integral
$$
I_{c,t}(z)=\int_{\Bn}\frac{(1-|w|^2)^t}{|1-\langle z,w\rangle |^{n+1+t+c}}\,dv(w)
$$
is comparable to $(1-|z|^2)^{-c}$.
\end{otherl}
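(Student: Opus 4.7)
The plan is to reduce the $n$-dimensional integral $I_{c,t}(z)$ to a one-variable problem by combining unitary invariance with slice integration, and then to invoke the classical Forelli--Rudin disk estimate; the lower bound is handled separately by a direct localization argument on a Bergman ball.

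First, since $dv$ is invariant under the unitary group and $|1-\langle Uz,Uw\rangle|=|1-\langle z,w\rangle|$ for every unitary $U$, the integral $I_{c,t}(z)$ depends only on $|z|$, and I may fix $z=re_1$ with $r=|z|$ so that $\langle z,w\rangle=r\bar w_1$. Splitting $w=(w_1,w')$ with $w_1\in\D$ and $w'\in\C^{n-1}$, and integrating out $w'$ first over the ball $\{w':|w'|^2<1-|w_1|^2\}$, the rescaling $w'=\sqrt{1-|w_1|^2}\,y$ in real dimension $2n-2$ gives
$$\int_{|w'|^2<1-|\zeta|^2}(1-|\zeta|^2-|w'|^2)^t\,dV(w')=C_n(1-|\zeta|^2)^{n-1+t},$$
a finite constant times $(1-|\zeta|^2)^{n-1+t}$ since $t>-1$. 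Up to positive constants this reduces $I_{c,t}(re_1)$ to the disk integral
$$J(r)=\int_{\D}\frac{(1-|\zeta|^2)^{n-1+t}}{|1-r\bar\zeta|^{n+1+t+c}}\,dA(\zeta).$$

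Second, I would evaluate $J(r)$ in polar coordinates $\zeta=\rho e^{i\theta}$. The classical angular estimate
$$\int_0^{2\pi}\frac{d\theta}{|1-se^{i\theta}|^{\alpha}}\asymp(1-s)^{1-\alpha},\qquad s\in[0,1),\ \alpha>1,$$
proved by splitting $[0,\pi]$ according to whether $|\theta|\le 1-s$ and using $|1-se^{i\theta}|\asymp(1-s)+|\theta|$, applies with $s=r\rho$ and $\alpha=n+1+t+c>1$ (since $n\ge 1$, $t>-1$, $c>0$). What remains is the radial Forelli--Rudin integral
$$\int_0^1(1-\rho^2)^{n-1+t}(1-r\rho)^{-(n+t+c)}\rho\,d\rho,$$
with exponents $a=n-1+t>-1$ and $b=n+t+c$ satisfying $b-a-1=c>0$. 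An elementary comparison (substituting $u=1-\rho$ and bounding the integrand above and below by $u^a(1-r+u)^{-b}$) shows it is comparable to $(1-r)^{-c}\asymp(1-r^2)^{-c}$, as required.

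Third, for the lower bound the reduction is unnecessary: restricting the integration to the Bergman ball $D(z,1)$, on which the uniform comparisons $|1-\langle z,w\rangle|\asymp 1-|z|^2$ and $1-|w|^2\asymp 1-|z|^2$ hold, together with $v(D(z,1))\asymp(1-|z|^2)^{n+1}$ as recalled in Section~\ref{b-carleson}, the integrand is at least a constant multiple of $(1-|z|^2)^{-(n+1+c)}$ there, yielding
$$I_{c,t}(z)\gtrsim(1-|z|^2)^{n+1}\cdot(1-|z|^2)^{-(n+1+c)}=(1-|z|^2)^{-c}.$$
The main obstacle is bookkeeping rather than depth: one has to track that the parameters stay in the admissible ranges for the classical one-variable Forelli--Rudin and angular estimates to apply, and one has to keep the normalization constants in the slice-integration reduction under control.
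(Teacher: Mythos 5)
Your proof is correct, but note that the paper itself offers no proof of this lemma: it is stated as a known result (Lemma D) with a citation to Theorem 1.12 of Zhu's book \cite{ZhuBn}, so the comparison must be with the argument given there. Zhu's proof is genuinely different in method: it writes $|1-\langle z,w\rangle|^{-(n+1+t+c)}=\big|(1-\langle z,w\rangle)^{-(n+1+t+c)/2}\big|^2$, expands the holomorphic factor in a power series in $\langle z,w\rangle$, integrates term by term using the orthogonality of the monomials $\langle z,w\rangle^k$ with respect to $(1-|w|^2)^t\,dv(w)$, and then reads off the growth $(1-|z|^2)^{-c}$ from Stirling asymptotics of the resulting quotients of Gamma functions. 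Your route---unitary invariance to reduce to $z=re_1$, slice integration in $w'$ (valid since $t>-1$) to land on a one-variable integral, then the classical angular estimate (applicable since $\alpha=n+1+t+c>1$) followed by the radial comparison (applicable since $a=n-1+t>-1$ and $b-a-1=c>0$)---is the standard real-variable alternative, and your parameter checks are all in the admissible ranges. It is more elementary in that it avoids power series, orthogonality computations and Stirling's formula, at the price of more bookkeeping; Zhu's series method has the advantage of yielding, with no extra work, the companion cases $c=0$ (logarithmic growth) and $c<0$ (boundedness) of his Theorem 1.12, which your radial step, requiring $b-a-1>0$, would need to treat separately. Finally, your third step, the lower bound via localization on the Bergman ball $D(z,1)$, is correct but redundant: the slice reduction and the one-variable estimates you invoke are already two-sided.
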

We also need a well known variant of the previous lemma.

\begin{otherl}\label{l2}
Let $\{z_ k\}$ be a separated sequence in $\Bn$, and let $n<t<s$.
Then
$$
\sum_{k=1}^{\infty}\frac{(1-|z_ k|^2)^t}{|1-\langle z,z_ k \rangle |^s}\le C\,
(1-|z|^2)^{t-s},\qquad z\in \Bn.
$$
\end{otherl}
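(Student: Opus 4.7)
The plan is to reduce the discrete sum to the continuous integral estimate of Lemma~\ref{Ict} by exploiting the separation hypothesis. First I would fix a radius $\delta>0$ small enough (depending on the separation constant of $\{z_k\}$) so that the Bergman metric balls $D(z_k,\delta)$ are pairwise disjoint. This is a standard consequence of the definition of a separated sequence together with the triangle inequality for $\beta$.

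Next, I would invoke the familiar comparability estimates on a Bergman ball: for every $w\in D(z_k,\delta)$ one has $1-|w|^2 \asymp 1-|z_k|^2$ and $|1-\langle z,w\rangle| \asymp |1-\langle z,z_k\rangle|$ uniformly in $z\in\Bn$, and moreover $v(D(z_k,\delta))\asymp (1-|z_k|^2)^{n+1}$. Combining these comparabilities yields, for each $k$,
$$
\frac{(1-|z_k|^2)^t}{|1-\langle z,z_k\rangle|^s}
\asymp \int_{D(z_k,\delta)} \frac{(1-|w|^2)^{t-n-1}}{|1-\langle z,w\rangle|^s}\,dv(w).
$$

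Summing over $k$ and using the disjointness of the balls $D(z_k,\delta)$, the total sum is dominated by a constant multiple of
$$
\int_{\Bn} \frac{(1-|w|^2)^{t-n-1}}{|1-\langle z,w\rangle|^s}\,dv(w).
$$
Finally I would apply Lemma~\ref{Ict} with the parameter $t$ there replaced by $t-n-1$ and with $c=s-t$, so that the exponent in the denominator becomes $n+1+(t-n-1)+(s-t)=s$, matching our integral. The hypothesis $n<t<s$ is exactly what makes this legitimate: it guarantees $t-n-1>-1$ and $c=s-t>0$, which are the assumptions of Lemma~\ref{Ict}. The conclusion then gives the bound $(1-|z|^2)^{-(s-t)}=(1-|z|^2)^{t-s}$, as desired.

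I do not anticipate a serious obstacle here; the argument is essentially a bookkeeping calculation, and the only point requiring care is to verify that the exponents line up with the admissible range of Lemma~\ref{Ict}, which is precisely why the hypothesis $n<t<s$ is imposed.
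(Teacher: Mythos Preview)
Your proposal is correct and matches the paper's own justification essentially verbatim: the paper remarks that Lemma~\ref{l2} follows from Lemma~\ref{Ict} after using the separation to obtain pairwise disjoint Bergman balls $D(z_k,r)$, which is precisely the reduction you carry out. Your verification that the hypothesis $n<t<s$ exactly matches the admissible range $t-n-1>-1$, $c=s-t>0$ for Lemma~\ref{Ict} is the only point of care, and you handle it correctly.
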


Lemma~\ref{l2} can be deduced from Lemma \ref{Ict} after noticing that,
if a sequence $\{z_ k\}$ is separated, then there is a constant
$r>0$ such that the Bergman metric balls $D(z_ k,r)$ are pairwise
disjoints. With all these preparations now we are ready to prove the main results.

\section{Proofs}
We need first the following lemma.
\begin{lemma}\label{lem1} Let $-1<\a<\infty$.
For $i=1,2,\dots, k$, let $0<p_i, q_i<\infty$, and let $f_i\in A^{p_i/q_i}_{\a_i}$.
Let
$$
\l=\sum_{i=1}^k\frac{q_i}{p_i}; \qquad
\g=\frac1{\l}\sum_{i=1}^k\frac{\a _iq_i}{p_i}.
$$
Then $\prod_{i=1}^kf_i\in A^{1/\l}_\g,$
and
$$
\left\|\prod_{i=1}^kf_i\right\|_{1/\l,\g}\lesssim \prod_{i=1}^k\|f_i\|_{p_i/q_i,\a_i}.
$$
\end{lemma}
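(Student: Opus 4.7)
The plan is to prove this by a direct application of the generalized Hölder inequality, together with a careful splitting of the weight $(1-|z|^2)^\gamma$. Holomorphicity of $\prod_{i=1}^k f_i$ is automatic since each factor is holomorphic on $\Bn$, so only the norm estimate needs work (and the estimate will imply integrability for free).

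First I would unravel the norm definition:
$$
\left\|\prod_{i=1}^k f_i\right\|_{1/\lambda,\gamma}^{1/\lambda}
= c_\gamma \int_{\Bn} \prod_{i=1}^k |f_i(z)|^{1/\lambda}\, (1-|z|^2)^\gamma\, dv(z),
$$
and pick the Hölder exponents $r_i = \lambda p_i/q_i$. Since $\lambda = \sum_{i=1}^k q_i/p_i$, we get $\sum_{i=1}^k 1/r_i = 1$, so these are admissible. Note also that $r_i/\lambda = p_i/q_i$, which is exactly the exponent showing up on the right-hand side of the target estimate.

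Next I would distribute the weight. Setting $\gamma_i = \alpha_i q_i/(\lambda p_i)$, the identity $\sum_{i=1}^k \gamma_i = \gamma$ is precisely the definition of $\gamma$ given in the hypothesis, so I can write $(1-|z|^2)^\gamma = \prod_{i=1}^k (1-|z|^2)^{\gamma_i}$. The choice of $\gamma_i$ is forced by the requirement $r_i\gamma_i = \alpha_i$, which is what makes the Hölder factors reproduce the weights of the spaces $A^{p_i/q_i}_{\alpha_i}$.

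Finally I would apply Hölder's inequality to the factorization
$$
\prod_{i=1}^k |f_i(z)|^{1/\lambda} (1-|z|^2)^\gamma = \prod_{i=1}^k \Bigl(|f_i(z)|^{1/\lambda} (1-|z|^2)^{\gamma_i}\Bigr),
$$
obtaining
$$
\int_{\Bn} \prod_{i=1}^k |f_i|^{1/\lambda} (1-|z|^2)^\gamma\, dv
\le \prod_{i=1}^k \left(\int_{\Bn} |f_i|^{p_i/q_i} (1-|z|^2)^{\alpha_i}\, dv\right)^{1/r_i}.
$$
Absorbing the normalization constants $c_\gamma, c_{\alpha_i}$ into the implicit constant and raising to the power $\lambda$ yields $\prod_i \|f_i\|_{p_i/q_i,\alpha_i}^{\lambda/r_i \cdot p_i/q_i} = \prod_i \|f_i\|_{p_i/q_i,\alpha_i}$, which is the desired bound. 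There is no real obstacle here; the only thing to watch is the bookkeeping that $\sum 1/r_i = 1$ and $\sum \gamma_i = \gamma$ both hold simultaneously for the forced choices, which is exactly the content of the definitions of $\lambda$ and $\gamma$ in \eqref{Eqlg}.
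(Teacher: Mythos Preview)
Your proposal is correct and is essentially identical to the paper's own proof: both apply the generalized H\"older inequality with exponents $r_i=\lambda p_i/q_i$ and split the weight as $(1-|z|^2)^\gamma=\prod_i (1-|z|^2)^{\alpha_i q_i/(\lambda p_i)}$, then absorb the normalization constants. There is nothing to add.
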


\begin{proof}
Let $f_i\in A^{p_i/q_i}_{\a_i}$ ($i=1,2,\dots, k$). Since $p_i\l/q_i>1$ for any $i=1,2,\dots,k$,
we can apply H\"older's inequality to obtain
\begin{eqnarray*}
\left\|\prod_{i=1}^k f_i\right\|_{1/\l,\g}
&=&\left(c_{\gamma}\int_{\Bn}\prod_{i=1}^k |f_i(z)|^{1/\l}(1-|z|^2)^{\g}\,dv(z)\right)^{\l}
\\
&\lesssim&\prod_{i=1}^k \left(\int_{\Bn}|f_ i(z)|^{\frac 1{\l}\frac{p_i\l}{q_i}}(1-|z|^2)^{\frac{q_i\a_i}{p_i\l}\frac{p_i\l}{q_i}}
\,dv(z)\right)^{\frac{q_i}{p_i}}
\\
&=& \prod_{i=1}^k \left(\int_{\Bn}|f_i(z)|^{\frac{p_i}{q_i}}(1-|z|^2)^{\a_i}\,dv(z)\right)^{\frac{q_i}{p_i}}
\\
&\lesssim &\prod_{i=1}^k\|f_i\|_{p_i/q_i,\a_i}.
\end{eqnarray*}
The result is proved.
\end{proof}

\begin{prop}\label{Car-P1}
Let $\mu$ be a positive Borel measure on $\Bn$.
For any integer $k\ge 1$ and $i=1,2,\dots,k$,
let $0<p_i,q_i<\infty$ and $-1<\a_i<\infty$, and let $\l$ and $\g$ be as in \eqref{Eqlg}. If
$\mu$ is a $(\l,\g)$-Bergman Carleson measure then \eqref{eq1} holds.
\end{prop}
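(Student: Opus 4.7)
The plan is to pass from the integral $\int\prod|f_i|^{q_i}\,d\mu$ to a sum over an $r$-lattice via Lemma~\ref{covering}, control each lattice term by the submean-value inequality for holomorphic functions, and then close by Hölder's inequality together with the geometric characterizations of Carleson measures given in Theorems~\ref{TA} and \ref{TB}.

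First I would fix an $r$-lattice $\{a_j\}$ with $D_j=D(a_j,r)$ and $\tilde D_j=D(a_j,4r)$ as in Lemma~\ref{covering}, and start from
$$\int_{\Bn}\prod_{i=1}^k|f_i(z)|^{q_i}\,d\mu(z) \;\le\; \sum_j \mu(D_j)\sup_{z\in D_j}\prod_{i=1}^k|f_i(z)|^{q_i}.$$
Since each $|f_i|^{p_i}$ is subharmonic, the standard sub-mean-value inequality together with $(1-|z|^2)\asymp(1-|a_j|^2)$ on $D_j$ gives
$$\sup_{z\in D_j}|f_i(z)|^{q_i} \;\lesssim\; (1-|a_j|^2)^{-(n+1+\alpha_i)q_i/p_i}\left(\int_{\tilde D_j}|f_i|^{p_i}\,dv_{\alpha_i}\right)^{q_i/p_i}.$$
Multiplying over $i$, the exponents of $(1-|a_j|^2)$ collapse by \eqref{Eqlg}, since $\sum_{i=1}^k(n+1+\alpha_i)q_i/p_i=\lambda(n+1+\gamma)$. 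Setting
$$c_j = \frac{\mu(D_j)}{(1-|a_j|^2)^{\lambda(n+1+\gamma)}},\qquad b_{j,i}=\int_{\tilde D_j}|f_i|^{p_i}\,dv_{\alpha_i},$$
I arrive at
$$\int_{\Bn}\prod_{i=1}^k|f_i|^{q_i}\,d\mu \;\lesssim\; \sum_j c_j\prod_{i=1}^k b_{j,i}^{q_i/p_i}.$$

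To finish, I split into the two cases $\lambda\ge 1$ and $\lambda<1$. When $\lambda\ge 1$, Theorem~\ref{TA} asserts that $\{c_j\}\in\ell^\infty$ with norm comparable to $\|\mu\|_{\lambda,\gamma}$, so I pull this bound out and apply Hölder's inequality to $\sum_j\prod_i b_{j,i}^{q_i/p_i}$ with conjugate exponents $\lambda p_i/q_i$ (whose reciprocals $q_i/(\lambda p_i)$ sum to one because $\sum q_i/p_i=\lambda$), followed by the $\ell^1\hookrightarrow\ell^\lambda$ embedding $\sum_j b_{j,i}^\lambda\le(\sum_j b_{j,i})^\lambda$. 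When $0<\lambda<1$, Theorem~\ref{TB} gives $\{c_j\}\in\ell^{1/(1-\lambda)}$ with norm comparable to $\|\mu\|_{\lambda,\gamma}$, and I apply Hölder directly to $\sum_j c_j\prod_i b_{j,i}^{q_i/p_i}$ with exponents $1/(1-\lambda)$ on the $c_j$ and $p_i/q_i$ on each $b_{j,i}^{q_i/p_i}$; their reciprocals $(1-\lambda)+\sum q_i/p_i=1$ sum to one. In either case the bounded-overlap property $\sum_j b_{j,i}\le N\|f_i\|_{p_i,\alpha_i}^{p_i}$ from Lemma~\ref{covering}(iii) delivers the desired factor $\prod_i\|f_i\|_{p_i,\alpha_i}^{q_i}$.

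The main subtlety is purely algebraic: it lies in verifying, via \eqref{Eqlg}, that the $(1-|a_j|^2)$ exponents telescope to exactly the Carleson exponent $\lambda(n+1+\gamma)$, so that Theorems~\ref{TA} and \ref{TB} apply, and that the Hölder exponents selected in each case satisfy $1/r_0+\sum 1/r_i=1$. Both checks are immediate from the definitions of $\lambda$ and $\gamma$, so after this bookkeeping the proof is really just a two-level Hölder inequality applied to the lattice sum.
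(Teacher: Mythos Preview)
Your argument is correct, but it proceeds quite differently from the paper's proof. The paper does not discretize at all: it first observes via Lemma~\ref{lem1} (H\"older on the integral) that for $h_i\in A^{p_i/q_i}_{\alpha_i}$ the product $\prod h_i$ lies in $A^{1/\lambda}_{\gamma}$, applies the $(\lambda,\gamma)$-Carleson hypothesis directly to this product, and then runs an iterative ``peel-off'' argument---fixing $h_2,\dots,h_k$ and reading the resulting inequality as saying that $|h_2\cdots h_k|\,d\mu$ is a $(q_1/p_1,\alpha_1)$-Carleson measure, which lets one replace $|h_1|$ by $|f_1|^{q_1}$, and so on. Your route instead passes through the lattice $\{a_j\}$, the submean-value estimate, and the geometric characterizations in Theorems~\ref{TA} and~\ref{TB}, finishing with a single multi-factor H\"older step on the discrete sum. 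The paper's approach is lighter in that it never invokes Theorems~\ref{TA} or~\ref{TB} for this direction and uses only the \emph{definition} of a Carleson measure together with the fact that the notion depends only on the ratio $q/p$; your approach, by contrast, makes the interplay between the exponent $\lambda(n+1+\gamma)$ and the Carleson condition completely explicit and avoids any iteration, which may be preferable in settings where one does not have the iterative trick available.
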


\begin{proof} If $k=1$ then the result is just the definition.
Let us now assume $k\ge2$.
Let $h_i\in A^{p_i/q_i,\a_i}$, $i=1,2,\dots,k$. By Lemma \ref{lem1},
 $\prod_{i=1}^k h_i\in A^{1/\l}_\g,$
and
$$
\left\|\prod_{i=1}^k h_i\right\|_{1/\l,\g}\lesssim \prod_{i=1}^k\|h_i\|_{p_i/q_i,\a_i}.
$$
Since $\mu$ is a $(\l,\g)$-Bergman Carleson measure,
\begin{equation}\label{eq2}
\int_{\Bn} \left|\prod_{i=1}^k h_i(z)\right|\,d\mu(z)
\le C\left\|\prod_{i=1}^k h_i\right\|_{1/\l,\g}
\le C\prod_{i=1}^k\|h_i\|_{p_i/q_i,\a_i}.
\end{equation}
Let
$$
d\mu_1=\left(\prod_{i=2}^k|h_i|\,d\mu\right)\bigg/\left(\prod_{i=2}^k\|h_i\|_{p_i/q_i,\a_i}\right).
$$
Then \eqref{eq2} is equivalent to
$$
\int_{\Bn}|h_1(z)|\,d\mu_1(z)\le C\|h_1\|_{p_1/q_1,\a_1}.
$$
Thus $\mu_1$ is a $(q_1/p_1,\a_1)$-Bergman Carleson measure.
Thus for any $f_1\in A^{p_1}_{\a_1}$,
$$
\int_{\Bn}|f_1(z)|^{q_1}\,d\mu_1(z)\le C\|f_1\|_{p_1,\a_1}^{q_1},
$$
which is the same as
\begin{equation}\label{eq3}
\int_{\Bn}|f_1(z)|^{q_1}\prod_{i=2}^k|h_i(z)|\,d\mu(z)
\le C\|f_1\|_{p_1,\a_1}^{q_1}\prod_{i=2}^k\|h_i\|_{p_i/q_i,\a_i}.
\end{equation}
Let
$$
d\mu_2=\left(|f_1|^{q_1}\prod_{i=3}^k|h_i|\,d\mu\right)
\bigg/\left(\|f_1\|_{p_1,\a_1}^{q_1}\prod_{i=3}^k\|h_i\|_{p_i/q_i,\a_i}\right).
$$
Then (\ref{eq3}) is the same as
$$
\int_{\Bn}|h_2(z)|\,d\mu_2(z)\le C\|h_2\|_{p_2/q_2,\a_2}.
$$
Thus $\mu_2$ is a $(q_2/p_2,\a_2)$-Bergman Carleson measure.
Thus for any $f_2\in A^{p_2}_{\a_2}$,
$$
\int_{\Bn}|f_2(z)|^{q_2}\,d\mu_2(z)\le C\|f_2\|_{p_2,\a_2}^{q_2},
$$
or
$$
\int_{\Bn}|f_1(z)|^{q_1}|f_2(z)|^{q_2}\prod_{i=3}^k|h_i(z)|\,d\mu(z)
\le C\|f_1\|_{p_1,\a_1}^{q_1}\|f_2\|_{p_2,\a_2}^{q_2}\prod_{i=3}^k\|h_i\|_{p_i/q_i,\a_i}.
$$
Continuing this process we will eventually get (\ref{eq1}).
\end{proof}

\subsection{Proof of Theorem \ref{toeplitz1}}
\subsubsection{\textbf{(i) implies (ii)}}
We divide this part into two cases: $\lambda\ge1$ and $0<\lambda<1$.\\

\textbf{Case 1: $\lambda\ge 1$.}
Fix $a\in \Bn$ and let $f_a(z)=(1-\langle z,a \rangle )^{-(n+1+\beta)}$.
Under the condition $(n+1+\beta)p_1>n+1+\alpha_1$,
it is easy to check using Lemma \ref{Ict} that $f_a\in A^{p_1}_{\alpha_1}$ with
$$
\|f_a\|_{p_1,\alpha_1}^{p_1}\lesssim (1-|a|^2)^{(n+1+\alpha_1)-(n+1+\beta)p_1}.
$$
Since
\begin{displaymath}
\begin{split}
T_{\mu}^{\b}f_a(z)&=\int_{\Bn}\! \frac{f_a(w)}{(1-\langle z, w\rangle )^{n+1+\beta}}\,d\mu(w)
\\
&=\int_{\Bn} \!\frac{d\mu(w)}{(1-\langle z, w \rangle )^{n+1+\beta} \cdot (1-\langle w, a \rangle)^{n+1+\beta}},
\end{split}
\end{displaymath}
we get
$$
T_{\mu}^{\beta}f_z(z)=\int_{\Bn}\frac{d\mu(w)}{|1- \langle z,w \rangle |^{2(n+1+\beta)}}
\ge C\frac{\mu(D(z,r))}{(1-|z|^2)^{2(n+1+\beta)}}.
$$
On the other hand, by the pointwise estimate for functions in Bergman spaces
(see \cite[Theorem 2.1]{ZhuBn}) together with the boundedness of the Toeplitz operator $T_{\mu}^{\b}$, we get
\begin{eqnarray*}
T_{\mu}^{\beta}f_z(z)
&=&|T_{\mu}^{\beta}f_z(z)|
\le \|T_{\mu}^{\beta}f_z\|_{p_2,\alpha_2}(1-|z|^2)^{-(n+1+\alpha_2)/p_2}
\\
&\le&\|T_{\mu}^{\beta}\|\cdot \|f_z\|_{p_1,\alpha_1}(1-|z|^2)^{-(n+1+\alpha_2)/p_2}
\\
&\lesssim& \|T_{\mu}^{\beta}\|\,(1-|z|^2)^{(n+1+\alpha_1)/p_1-(n+1+\alpha_2)/p_2-(n+1+\beta)}.
\end{eqnarray*}
Hence
\begin{eqnarray*}
\mu(D(z,r))
&\lesssim&\|T_{\mu}^{\beta}\|\,(1-|z|^2)^{(n+1+\beta)+(n+1+\alpha_1)/p_1-(n+1+\alpha_2)/p_2}\\
&=&\|T_{\mu}^{\beta}\|\,(1-|z|^2)^{(n+1+\g)\lambda}.
\end{eqnarray*}
By Theorem A, this means that  $\mu$ is a $(\l,\g)$-Bergman Carleson measure with
$$
\|\mu\|_{\l,\gamma} \lesssim \|T_{\mu}^{\beta}\|.
$$

\textbf{Case 2: $0<\lambda<1$.}
Notice that the condition $0<\lambda<1$ is equivalent to $0<p_2<p_1<\infty$.
Let $r_k(t)$ be a sequence of Rademacher functions
(see \cite[Appendix A]{Du}) and $\{a_ k\}$ be any  $r$-lattice on $\Bn$.
Since $$n+1+\beta>n \max(1,1/p_ 1)+(1+\alpha_1)/p_1,$$ we know from Theorem 2.30 in \cite{ZhuBn} that,
for any sequence of real numbers $\{\lambda_k\}\in \ell^{p_1}$, the function
$$
f_t(z)=\sum_{k=1}^{\infty}\lambda_k r_k(t)
\frac{(1-|a_k|^2)^{n+1+\beta-(n+1+\alpha_1)/p_1}}{(1-\langle z, a_k \rangle)^{n+1+\beta}}
$$
is in $A^{p_1}_{\alpha_1}$ with $\|f_t\|_{p_1,\alpha_1}\lesssim\big \|\{\lambda_k\} \big \|_{\ell^{p_1}}$ for almost every $t$ in $(0,1)$.
Denote by
$$
f_k(z)=\frac{(1-|a_k|^2)^{n+1+\beta-(n+1+\alpha_1)/p_1}}{(1-\langle z, a_k \rangle )^{n+1+\beta}}.
$$
Since $T_{\mu}^{\b}$ is bounded from $A^{p_1}_{\a_1}$ to $A^{p_2}_{\a_2}$ then, for almost every $t$ in $(0,1)$,
we get that
\begin{eqnarray*}
\|T_{\mu}^{\beta}f_t\|_{p_2,\alpha_2}^{p_2}
&=&\int_{\Bn}\left|\sum_{k=1}^{\infty}\lambda_k \,r_k(t)\,T_{\mu}^{\beta}f_k(z)\right|^{p_2}\,dv_{\alpha_2}(z)
\\
&\lesssim & \|T_{\mu}^{\beta}\|^{p_ 2} \cdot \|f_t\|_{p_1,\alpha_1}^{p_2}
\lesssim \|T_{\mu}^{\beta}\|^{p_ 2} \left(\sum_{k=1}^{\infty}|\lambda_k|^{p_1}\right)^{p_2/p_1}.
\end{eqnarray*}
Integrating both sides with respect to $t$ from $0$ to $1$,
and using Fubini's Theorem and Khinchine's inequality (see \cite[Theorem A.2]{Du}),
we get
\begin{equation}\label{lp}
\int_{\Bn}\left(\sum_{k=1}^{\infty}|\lambda_k|^2
|T_{\mu}^{\beta}f_k(z)|^2\right)^{p_2/2}\!dv_{\alpha_2}(z)
\lesssim \|T_{\mu}^{\beta}\|^{p_ 2} \cdot \big \|\{\lambda_k\} \big \|_{\ell^{p_1}}^{p_2}.
\end{equation}
Let $\{D_k\}$
be the associated sets to the lattice $\{a_ k\}$ in Lemma~\ref{covering}.
Then
\begin{eqnarray}\label{t-mu0}
&~&\sum_{k=1}^{\infty}|\lambda_k|^{p_2}
\int_{\tilde D_k}|T_{\mu}^{\beta}f_k(z)|^{p_2}\,dv_{\alpha_2}(z)
\\
&~&\qquad =\int_{\Bn}\left(\sum_{k=1}^{\infty}|\lambda_k|^{p_2}
|T_{\mu}^{\beta}f_k(z)|^{p_2}\chi_{\tilde D_k}(z)\right)^{\frac{2}{p_2}\cdot\frac{p_2}{2}}
\!\!\!dv_{\alpha_2}(z). \notag
\end{eqnarray}
If $p_2\le 1$, then $2/p_2\le 1$, and from (\ref{t-mu0}) we have
\begin{eqnarray*}
&~&\sum_{k=1}^{\infty}|\lambda_k|^{p_2}
\int_{\tilde D_k}|T_{\mu}^{\beta}f_k(z)|^{p_2}\,dv_{\alpha_2}(z)
\\
&~&\qquad \le \int_{\Bn}\left(\sum_{k=1}^{\infty}|\lambda_k|^{2}
|T_{\mu}^{\beta}f_k(z)|^{2}\chi_{\tilde D_k}(z)\right)^{p_2/2}
\!\!dv_{\alpha_2}(z)
\\
&~&\qquad \le \int_{\Bn}\left(\sum_{k=1}^{\infty}|\lambda_k|^{2}
|T_{\mu}^{\beta}f_k(z)|^{2}\right)^{p_2/2}
\!\! dv_{\alpha_2}(z).
\end{eqnarray*}
If $0<p_2<1$, then $2/p_2>1$. Thus, from (\ref{t-mu0}) and  H\"older's inequality we get
\begin{eqnarray*}
&~&\sum_{k=1}^{\infty}|\lambda_k|^{p_2}
\int_{\tilde D_k}|T_{\mu}^{\beta}f_k(z)|^{p_2}\,dv_{\alpha_2}(z)
\\
&~&\qquad \le
\int_{\Bn}\left(\sum_{k=1}^{\infty}|\lambda_k|^2
|T_{\mu}^{\beta}f_k(z)|^2 \chi_{\tilde D_k(z)}\right)^{p_2/2}
\left(\sum_ k \chi_{\tilde D_k}(z)\right)^{1-p_2/2}
\!\!\!\!\!dv_{\alpha_2}(z)
\\
&~&\qquad \le N^{1-p_2/2}
\int_{\Bn}\left(\sum_{k=1}^{\infty}|\lambda_k|^2
|T_{\mu}^{\beta}f_k(z)|^2\right)^{p_2/2}\!\!\!dv_{\alpha_2}(z),
\end{eqnarray*}
since any point $z$ belongs to at most $N$ of the sets $\tilde{D}_ k$.
Combining the above two inequalities, and applying (\ref{lp}) we obtain
\begin{eqnarray*}
&~&\sum_{k=1}^{\infty}|\lambda_k|^{p_2}
\int_{\tilde D_k}|T_{\mu}^{\beta}f_k(z)|^{p_2}\,dv_{\alpha_2}(z)
\\
&~&\qquad\le\min\{1,N^{1-p_2/2}\}
\int_{\Bn}\left(\sum_{k=1}^{\infty}|\lambda_k|^2
|T_{\mu}^{\beta}f_k(z)|^2\right)^{p_2/2}\!\!\!dv_{\alpha_2}(z)
\\
&~&\qquad\lesssim \|T_{\mu}^{\beta}\|^{p_ 2}\cdot \big \|\{\lambda_k\}\big \|_{\ell^{p_1}}^{p_2}.
\end{eqnarray*}
Since, by subharmonicity (see \cite[Lemma 2.24]{ZhuBn}), we have
$$
|T_{\mu}^{\beta}f_k(a_k)|^{p_2}
\lesssim \frac{1}{(1-|a_k|^2)^{n+1+\alpha_2}}
\int_{\tilde D_k}|T_{\mu}^{\beta}f_k(z)|^{p_2}\,dv_{\alpha_2}(z),
$$
we get
\begin{equation}\label{t-mu1}
\sum_{k=1}^{\infty}|\lambda_k|^{p_2}(1-|a_k|^2)^{n+1+\alpha_2}
|T_{\mu}^{\beta}f_k(a_k)|^{p_2}
\lesssim  \|T_{\mu}^{\beta}\|^{p_ 2}\cdot \big \|\{\lambda_k\} \big \|_{\ell^{p_1}}^{p_2}.
\end{equation}
Now, notice that
$$
T_{\mu}^{\beta}f_k(a_k)=
(1-|a_k|^2)^{n+1+\beta-(n+1+\alpha_1)/p_1}\int_{\Bn}\frac{d\mu(w)}{|1-\langle w,a_ k \rangle |^{2(n+1+\beta)}}.
$$
Therefore
$$
\frac{\mu(D_k)}{(1-|a_k|^2)^{n+1+\beta+(n+1+\alpha_1)/p_1}}\lesssim \,T_{\mu}^{\beta}f_k(a_k),
$$
and putting this into (\ref{t-mu1}) we get
$$
\sum_{k=1}^{\infty}|\lambda_k|^{p_2}\left(\frac{\mu(D_k)}{(1-|a_k|^2)^s}\right)^{p_2}
\lesssim \|T_{\mu}^{\beta}\|^{p_ 2}\cdot \big \|\{\lambda_k\}\big \|_{\ell^{p_1}}^{p_2},
$$
with
\begin{equation}\label{EqS}
s=n+1+\beta+\frac{(n+1+\alpha_1)}{p_1}-\frac{(n+1+\alpha_2)}{p_2}=(n+1+\gamma)\lambda.
\end{equation}
Since the conjugate exponent of $(p_ 1/p_ 2)$ is $(p_1/p_2)'=p_1/(p_1-p_2)$,
by duality, we know that
$$
\{\nu_ k\}:=\left\{\left(\frac{\mu(D_k)}{(1-|a_k|^2)^s}\right)^{p_2}\right\}\in \ell^{p_1/(p_1-p_2)}
$$
with
$$
\big \|\{\nu_ k\} \big \|_{\ell^{p_1/(p_1-p_2)}} \lesssim \|T_{\mu}^{\beta}\|^{p_ 2},
$$
or,
$$
\{\mu_ k\}:=\left\{\frac{\mu(D_k)}{(1-|a_k|^2)^{(n+1+\gamma)\lambda}}\right\}\in \ell^{p_1p_2/(p_1-p_2)}=\ell^{1/(1-\lambda)}
$$
with
$$\big \|\{\mu_ k\} \big \|_{\ell^{1/(1-\l)}}=\big \|\{\nu_ k\} \big \|_{\ell^{p_1/(p_1-p_2)}}^{1/p_ 2} \lesssim \|T_{\mu}^{\beta}\|.$$
By Theorem B, this means that
$\mu$ is an $(\lambda,\gamma)$-Bergman Carleson measure with
$$\|\mu\|_{\l,\gamma} \lesssim \|T_{\mu}^{\beta}\|.$$

\subsubsection{\textbf{(ii) implies (i)}}
Now suppose (ii) holds, i.e., $\mu$ is a $(\l,\g)$-Bergman Carleson measure, we prove (i).
We divide the proof into three cases.\\

\textbf{Case 1: $p_2>1$.}
For this case, let $p'_2$ and $\a'_2$ be two numbers satisfying
\begin{equation}\label{Eq-DN}
\frac{1}{p_2}+\frac{1}{p'_2}=1;\qquad
\frac{\a_2}{p_2}+\frac{\a'_2}{p'_2}=\b.
\end{equation}
Then
$$
\a'_2=\left(\b-\frac{\a_2}{p_2}\right)p'_2=\frac{\b p_2-\a_2}{p_2-1}>-1
$$
since $\b>(1+\a_2)/p_2-1$. By a duality result due to Luecking (see \cite{l1-b} or \cite[Theorem 2.12]{ZhuBn}), we know that
$(A^{p_2}_{\a_2})^*=A^{p'_2}_{\a'_2}$ under the integral pairing
$$
\langle f, g\rangle_{\b}=\int_{\Bn} f(z)\,\overline{g(z)}\,dv_{\b}(z).
$$

Let $f\in A^{p_1}_{\a_1}$ and $h\in A^{p'_2}_{\a'_2}$.
An easy computation using Fubini's Theorem and the reproducing formula for Bergman spaces shows
$$
\langle h, T_{\mu}^{\beta}f\rangle_{\b}=\int_{\Bn} h(z)\,\overline{f(z)}\,d\mu(z).
$$
The conditions for $\l$ and $\g$ in the Theorem are equivalent to
$$
\l=\frac1{p_1}+\frac1{p'_2}, \qquad
\g=\frac1{\l}\left(\frac{\a_1}{p_1}+\frac{\a'_2}{p'_2}\right).
$$
Thus by Proposition \ref{Car-P1},
\begin{displaymath}
\begin{split}
|\langle h, T_{\mu}^{\beta}f\rangle_{\b}|
&\le \int_{\Bn} |h(z)|\,|f(z)|\,d\mu(z)
\lesssim \|\mu\|_{\l,\g}\cdot \|f\|_{p_1,\a_1}\cdot \|h\|_{p'_2,\a'_2}.
\end{split}
\end{displaymath}
Hence $T_{\mu}^{\beta}$ is bounded from $A^{p_1}_{\a_1}$ to $A^{p_2}_{\a_2}$ with $\|T_{\mu}^{\beta}\|\lesssim \|\mu\|_{\l,\g}$.\\

\textbf{Case 2: $p_2=1$.}
Let $f\in A^{p_1}_{\alpha_1}$.
For this case,
since $\beta>(1+\alpha_2)/1-1=\alpha_2$,
by Fubini's Theorem and Lemma \ref{Ict} we have
\begin{eqnarray}\label{eq8}
\|T_{\mu}^{\beta}f\|_{1,\alpha_2}
&\le&\int_{\Bn}\left(\int_{\Bn}\frac{|f(w)|}{|1-\langle z, w \rangle |^{n+1+\beta}}\,d\mu(w)\right)\,dv_{\alpha_2}(z)
\\
&=&\int_{\Bn}|f(w)|\left(\int_{\Bn}\frac{(1-|z|^2)^{\alpha_2}}{|1-\langle z, w\rangle|^{n+1+\beta}}\,dv(z)\right)\,d\mu(w) \notag
\\
&\lesssim&\int_{\Bn}|f(w)|(1-|w|^2)^{\alpha_2-\beta}\,d\mu(w). \notag
\end{eqnarray}
Let $\nu$ be the measure defined by $d \nu(w)=(1-|w|^2)^{\alpha_2-\beta}\,d\mu(w)$.
Since $\mu$ is a $(\lambda,\gamma)$-Bergman Carleson measure, using Theorems A and B it is easy to see that $\nu$ is a $(1/p_1,\alpha_1)$-Bergman Carleson measure, and moreover, $\|\nu\|_{1/p_ 1,\a _ 1}\lesssim \|\mu\|_{\l,\g}$. Thus, for any $f\in A^{p_1}_{\alpha_1}$, we have
$$
\int_{\Bn}|f(w)|\,d \nu(w)
\lesssim \|\nu\|_{1/p_ 1,\a _ 1}\cdot \|f\|_{p_1,\alpha_1}\lesssim \|\mu\|_{\l,\g}\cdot \|f\|_{p_1,\alpha_1}.
$$
Thus, by (\ref{eq8}), it follows that
$$
\|T_{\mu}^{\beta}f\|_{1,\alpha_2}\lesssim \|\mu\|_{\l,\g}\cdot \|f\|_{p_1,\alpha_1},
$$
and so $T_{\mu}^{\beta}$ is bounded from $A^{p_1}_{\a_1}$ to $A^{p_2}_{\a_2}$ with $\|T_{\mu}^{\beta}\|\lesssim \|\mu\|_{\l,\g}$.\\

\textbf{Case 3: $0<p_2<1$.}
Let $\{a_k\}$ be an $r$-lattice of $\Bn$ in the Bergman metric
and $\{D_k\}$ be the corresponding sets as in Lemma~\ref{covering}.
Then we know that $\Bn=\cup_{k=1}^{\infty}D_k$
and there is a positive integer $N$ such that
each point in $\Bn$ belongs to at most $N$ of
sets $\tilde D_k$.
Then
\begin{displaymath}
\begin{split}
|T_{\mu}^{\beta}f(z)| &
\lesssim\sum_{k=1}^{\infty}\int_{D_k}\frac{|f(w)|}{|1-\langle z,w\rangle |^{n+1+\beta}}\,d\mu(w)
\\
&
\lesssim \sum_{k=1}^{\infty}\frac1{|1-\langle z, a_k \rangle|^{n+1+\beta}}\int_{D_k}|f(w)|\,d\mu(w).
\end{split}
\end{displaymath}
Now, for $w\in D_k$, one has
$$
|f(w)|^{p_1}
\lesssim \frac1{(1-|a_k|^2)^{n+1+\alpha_1}}\int_{\tilde{D}_k}|f(z)|^{p_1}\,dv_{\alpha_1}(z).
$$
From this we get
$$
\int_{D_k}|f(w)|\,d\mu(w)
\lesssim  \frac1{(1-|a_k|^2)^{(n+1+\alpha_1)/p_1}}
\left(\int_{\tilde{D}_k}|f(z)|^{p_1}\,dv_{\alpha_1}(z)\right)^{1/p_1}\mu(D_k).
$$
Since $0<p_2<1$, this implies
$$
|T_{\mu}^{\beta}f(z)|^{p_2}
\lesssim \sum_{k=1}^{\infty}\frac1{|1-\langle z, a_k\rangle |^{(n+1+\beta)p_2}}
\frac{\mu(D_k)^{p_2}}{(1-|a_k|^2)^{(n+1+\alpha_1)\frac{p_2}{p_1}}}
\left(\int_{\tilde{D}_k} \!\!|f(z)|^{p_1}\,dv_{\alpha_1}(z)\right)^{\frac{p_2}{p_1}}.
$$
Therefore, since $(n+1+\beta)p_2>n+1+\alpha_2$, we can apply Lemma \ref{Ict} to obtain
\begin{eqnarray}\label{t-mu2}
\|T_{\mu}^{\beta}f\|_{p_2,\alpha_2}^{p_2}
&\lesssim &\sum_{k=1}^{\infty}
\frac{\mu(D_k)^{p_2}}{(1-|a_k|^2)^{(n+1+\alpha_1)p_2/p_1}}
\left(\int_{\tilde D_k}  \! \! |f(z)|^{p_1}\,dv_{\alpha_1}(z)\right)^{p_2/p_1} \notag
\\
&~&\quad\times
\int_{\Bn}\frac{(1-|z|^2)^{\alpha_2}\,dv(z)}{|1-\langle z, a_k \rangle |^{(n+1+\beta)p_2}}
\\
&\lesssim &\sum_{k=1}^{\infty}
\frac{\mu(D_k)^{p_2}}{(1-|a_k|^2)^{(n+1+\alpha_1)p_2/p_1}}
\left(\int_{\tilde D_k}  \!\!  |f(z)|^{p_1}\,dv_{\alpha_1}(z)\right)^{p_2/p_1} \notag
\\
&~&\quad\times
(1-|a_k|^2)^{n+1+\alpha_2-(n+1+\beta)p_2}. \notag
\end{eqnarray}
First assume that $\lambda\ge 1$.
Since $\mu$ is a $(\lambda,\gamma)$-Bergman Carleson measure,
by Theorem A we get
$$
\mu(D_ k)\lesssim \|\mu\|_{\l,\g}\,(1-|a_ k|^2)^{(n+1+\gamma)\l}.
$$
Bearing in mind \eqref{EqS}, this together with \eqref{t-mu2} and the fact that $p_ 2\ge p_ 1$ (due to the assumption $\l \ge 1$), yields
\begin{eqnarray*}
\|T_{\mu}^{\beta}f\|_{p_2,\alpha_2}^{p_2}
&\lesssim & \|\mu\|_{\l,\g}^{p_ 2}\,\sum_{k=1}^{\infty}
\left(\int_{\tilde D_k}|f(z)|^{p_1}\,dv_{\alpha_1}(z)\right)^{p_2/p_1}
\\
&\lesssim & \|\mu\|_{\l,\g}^{p_ 2}\,\left(\sum_{k=1}^{\infty}
\int_{\tilde D_k}|f(z)|^{p_1}\,dv_{\alpha_1}(z)\right)^{p_2/p_1} \lesssim  \|\mu\|_{\l,\g}^{p_ 2}\cdot\|f\|_{p_1,\alpha_1}^{p_2}.
\end{eqnarray*}
Hence $T_{\mu}^{\beta}$ is bounded from $A^{p_1}_{\alpha_1}$ to $A^{p_2}_{\alpha_2}$ with $\|T_{\mu}^{\beta}\|\lesssim \|\mu\|_{\l,\g}$.
\\
\\
Next assume that $0<\lambda<1$. Then $p_ 1>p_ 2$,
and using H\"older's inequality in \eqref{t-mu2} we get
\begin{eqnarray*}
\|T_{\mu}^{\beta}f\|_{p_2,\alpha_2}^{p_2}
&\lesssim &\sum_{k=1}^{\infty}
\frac{\mu(D_k)^{p_2}}{(1-|a_k|^2)^{(n+1+\gamma)\l p_2}}
\left(\int_{\tilde D_k}|f(z)|^{p_1}\,dv_{\alpha_1}(z)\right)^{p_2/p_1}
\\
&\le&\left\{\sum_{k=1}^{\infty}
\left[\frac{\mu(D_k)^{p_2}}{(1-|a_k|^2)^{(n+1+\gamma)\l p_2}}\right]^{p_1/(p_1-p_2)}
\right\}^{1-p_2/p_1}
\\
&~&\quad\times \left (\sum_{k=1}^{\infty}
\int_{\tilde D_k}|f(z)|^{p_1}\,dv_{\alpha_1}(z)\right)^{p_ 2/p_ 1}.
\end{eqnarray*}
Since $\mu$ is a $(\lambda,\gamma)$-Bergman Carleson measure,
by Theorem B we get that
\begin{displaymath}
\begin{split}
\sum_{k=1}^{\infty}
\left[\frac{\mu(D_k)^{p_2}}{(1-|a_k|^2)^{(n+1+\gamma)\l p_2}}\right]^{p_1/(p_1-p_2)}
&=\sum_{k=1}^{\infty}
\left[\frac{\mu(D_k)}{(1-|a_k|^2)^{(n+1+\gamma)\lambda}}\right]^{1/(1-\lambda)}\\
\\
&\lesssim  \|\mu\|_{\l,\g}^{1/(1-\lambda)}=\|\mu\|_{\l,\g}^{p_ 1 p_ 2/(p_ 1-p_ 2)},
\end{split}
\end{displaymath}
and so
$$
\|T_{\mu}^{\beta}f\|_{p_2,\alpha_2}^{p_2}
\lesssim  \|\mu\|_{\l,\g}^{p_ 2}\,\left (\sum_{k=1}^{\infty}
\int_{\tilde D_k}|f(z)|^{p_1}\,dv_{\alpha_1}(z)\right)^{p_2/p_1}
\lesssim  \|\mu\|_{\l,\g}^{p_ 2}\cdot \|f\|_{p_1,\alpha_1}^{p_2}.
$$
Hence $T_{\mu}^{\beta}$ is bounded from $A^{p_1}_{\alpha_1}$ to $A^{p_2}_{\alpha_2}$ with $\|T_{\mu}^{\beta}\|\lesssim \|\mu\|_{\l,\g}$.
The proof is complete.

\subsection{A key lemma}
Now, we are going to use the result just proved on Toeplitz operators to obtain the following technical result that will be the key for the proof of the remaining part in Theorem \ref{carleson}.
\begin{lemma}\label{ML}
Let $\mu$ be a positive Borel measure on the
unit ball $\Bn$. For $s,r>0$ and $\alpha _ 1>-1$, let
$$
S_{\mu,\alpha _ 1}^r f(z)
=(1-|z|^2)^s\int_{\Bn} \frac{|f(w)|^r\,d\mu(w)}{|1-\langle z,w \rangle |^{n+1+s+\alpha_ 1}}.
$$
For $q>1$, $p>0$ and $\a_ 2>-1$, let
\begin{equation}\label{gl}
\lambda=1+\frac{r}{p}-\frac{1}{q},\quad \textrm{and}\quad \gamma
=\frac{1}{\lambda}\Big(\alpha_ 1+\frac{\alpha_ 2 r}{p}-\frac{\alpha_ 1}{q} \Big ).
\end{equation}
The following conditions are equivalent:
\begin{enumerate}
\item [(a)] $\mu$ is a $(\lambda,\gamma)$-Bergman Carleson measure.
\item[(b)] There is a positive constant $K$ such that
$\|S_{\mu,\a _ 1}^r f \|_{q,\a _ 1}\le K \|f\|_{p,\a_ 2}^r$ for $f\in A^p_{\a_ 2}$.
\end{enumerate}
Moreover, one has $\|\mu\|_{\l,\g} \asymp  K$.
\end{lemma}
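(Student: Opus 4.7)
The plan is to imitate the proof of Theorem~\ref{toeplitz1}. Writing $\beta=s+\alpha_1$ and making the identification $(p_1,p_2,\alpha_1^{\mathrm{Thm}},\alpha_2^{\mathrm{Thm}})=(p/r,q,\alpha_2,sq+\alpha_1)$, a direct computation shows that the numbers $\lambda,\gamma$ of \eqref{gl} coincide with those of Theorem~\ref{toeplitz1}, so $S^r_{\mu,\alpha_1}$ plays the role of the positive-kernel analogue of $T^\beta_\mu$ applied to the non-holomorphic function $|f|^r$. The main technical twist is that the non-holomorphicity of $|f|^r$ blocks the use of the reproducing formula via duality; the substitute will be a lattice decomposition combined with subharmonicity of $|f|^p$.

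For (b) $\Rightarrow$ (a) when $\lambda\ge1$, I would test on $f_a(z)=(1-\langle z,a\rangle)^{-t}$ with $tp>n+1+\alpha_2$. On the Bergman ball $D(a,r)$ both $(1-|z|^2)^s$ and $|1-\langle z,w\rangle|^{n+1+s+\alpha_1}$ (for $w\in D(a,r)$) are comparable to their values at $a$, so
$$S^r_{\mu,\alpha_1}f_a(z)\gtrsim\mu(D(a,r))\,(1-|a|^2)^{-(n+1+\alpha_1+tr)},\qquad z\in D(a,r).$$
Integrating in $dv_{\alpha_1}$ over $D(a,r)$ and comparing with $\|f_a\|_{p,\alpha_2}^r\asymp(1-|a|^2)^{(n+1+\alpha_2)r/p-tr}$ through (b), the $tr$ cancels and one obtains $\mu(D(a,r))\lesssim K(1-|a|^2)^{(n+1+\gamma)\lambda}$, giving (a) by Theorem~A. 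When $\lambda<1$ (equivalently $p>rq$), I would mirror the Rademacher argument from Case~2 of the proof of Theorem~\ref{toeplitz1}: for an $r$-lattice $\{a_k\}$ and atoms $\psi_k(z)=(1-|a_k|^2)^{n+1+\beta-(n+1+\alpha_2)r/p}(1-\langle z,a_k\rangle)^{-(n+1+\beta)}$, form $f_t(z)=\sum_k\lambda_kr_k(t)\psi_k(z)$, apply (b), raise to the $q$-th power, integrate in $t$, and use Khinchine (together with Jensen, valid because $qr/p<1$) and subharmonicity on $\tilde D_k$; the $\ell^{p/r}$--$\ell^{(p/r)'}$ duality on sequences then yields $\{\mu(D_k)/(1-|a_k|^2)^{(n+1+\gamma)\lambda}\}\in\ell^{1/(1-\lambda)}$, giving (a) by Theorem~B.

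For (a) $\Rightarrow$ (b), I would use $L^q$--$L^{q'}$ duality and Fubini to obtain
$$\|S^r_{\mu,\alpha_1}f\|_{q,\alpha_1}\asymp\sup_{g\ge 0,\,\|g\|_{q',\alpha_1}\le 1}\int_{\Bn}|f(w)|^r\,G(w)\,d\mu(w),$$
where $G(w)=\int g(z)(1-|z|^2)^{s+\alpha_1}|1-\langle z,w\rangle|^{-(n+1+s+\alpha_1)}\,dv(z)$. Fix an $r$-lattice $\{a_k\}$ and split $\int=\sum_k\int_{D_k}$. On each $D_k$, $G(w)\asymp G(a_k)$ by standard kernel estimates, and subharmonicity of $|f|^p$ gives $|f(w)|^r\lesssim\big((1-|a_k|^2)^{-(n+1+\alpha_2)}\int_{\tilde D_k}|f|^p\,dv_{\alpha_2}\big)^{r/p}$. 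Setting
$$\mu_k=\frac{\mu(D_k)}{(1-|a_k|^2)^{(n+1+\gamma)\lambda}},\quad X_k=\int_{\tilde D_k}|f|^p\,dv_{\alpha_2},\quad B_k=G(a_k)\,(1-|a_k|^2)^{(n+1+\alpha_1)/q'},$$
the identity $(n+1+\gamma)\lambda=(n+1+\alpha_2)r/p+(n+1+\alpha_1)/q'$ collapses the weights to $\int|f|^rG\,d\mu\lesssim\sum_k\mu_k\,X_k^{r/p}\,B_k$. Bounded overlap of $\{\tilde D_k\}$ gives $\sum_kX_k\lesssim\|f\|_{p,\alpha_2}^p$, and a Schur-test argument shows that $g\mapsto G$ is bounded on $L^{q'}(dv_{\alpha_1})$ (the Forelli--Rudin condition $(1+s+\alpha_1)q'>1+\alpha_1$ is automatic since $s>0$), yielding $\sum_kB_k^{q'}\lesssim\|g\|_{q',\alpha_1}^{q'}\le 1$. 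Closing by H\"older: if $\lambda\ge 1$, Theorem~A puts $\{\mu_k\}\in\ell^\infty$ and the three-factor H\"older at exponents $(q,q',\infty)$ applies (via $\sum X_k^{rq/p}\le(\sum X_k)^{rq/p}$, valid because $rq/p\ge 1$); if $\lambda<1$, Theorem~B puts $\{\mu_k\}\in\ell^{1/(1-\lambda)}$ and the H\"older at $(p/r,q',1/(1-\lambda))$ closes, since these exponents sum reciprocally to $1$ precisely when $\lambda=r/p+1/q'$.

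The main obstacle is the non-holomorphicity of the dual function $G$, which blocks a direct application of Proposition~\ref{Car-P1} to the product $|f|^rG$; the lattice decomposition together with the slow variation of the positive Bergman kernel on metric balls is the workaround, and this is precisely the mechanism that couples the case split $\lambda\ge 1$ versus $\lambda<1$ to the two characterisations (Theorems~A and~B) of $(\lambda,\gamma)$-Bergman Carleson measures.
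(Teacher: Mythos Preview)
Your (a)$\Rightarrow$(b) argument is correct and takes a genuinely different route from the paper. The paper stays on the primal side: it discretises $S^r_{\mu,\alpha_1}f(z)$ via a lattice, raises the resulting sum to the power $q$, and controls that $q$-th power by H\"older together with the separated-sequence estimate of Lemma~\ref{l2}, before integrating in $z$ via Lemma~\ref{Ict}. You instead dualise in $L^q(dv_{\alpha_1})$; the Fubini step then replaces the kernel gymnastics by a single positive-projection bound on $g\mapsto G$, and the three-term H\"older on sequences closes cleanly. Both approaches land on the same discretised sum $\sum_k\mu_k\,X_k^{r/p}\,B_k$, but yours avoids the $\varepsilon$-weight trick and Lemma~\ref{l2}. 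Your (b)$\Rightarrow$(a) argument for $\lambda\ge1$ matches the paper's.

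The gap is in (b)$\Rightarrow$(a) for $\lambda<1$. The Rademacher--Khinchine machinery from Case~2 of Theorem~\ref{toeplitz1} rests on the \emph{linearity} of $T^\beta_\mu$: one has $T^\beta_\mu f_t=\sum_k\lambda_kr_k(t)\,T^\beta_\mu\psi_k$, and Khinchine is applied to $\int_0^1|T^\beta_\mu f_t(z)|^{p_2}\,dt$. Here $S^r_{\mu,\alpha_1}f_t(z)$ depends on $f_t$ only through $|f_t(w)|^r$ inside an integral, so after integrating $[S^r_{\mu,\alpha_1}f_t(z)]^q$ in $t$ there is no Rademacher sum to which Khinchine applies. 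Your parenthetical ``Jensen, valid because $qr/p<1$'' does not address this; the relevant convexity would be for the outer exponent $q>1$, not for $qr/p$. There are also slips: your atoms are normalised for $A^{p/r}_{\alpha_2}$ rather than $A^{p}_{\alpha_2}$, and the closing sequence duality should be $\ell^{p/(rq)}$--$\ell^{p/(p-rq)}$, not $\ell^{p/r}$.

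The paper sidesteps the nonlinearity by a completely different mechanism: it bootstraps through Theorem~\ref{toeplitz1}. For $r=1$, condition (b) dominates the boundedness of a genuine Toeplitz operator $T^{s+\alpha_1}_\mu:A^p_{\alpha_2}\to A^q_{sq+\alpha_1}$, and Theorem~\ref{toeplitz1} gives (a) directly. For $r>1$ one H\"older-splits $T^\beta_\mu f$ into a factor controlled by (b) and a Berezin-transform factor $B_{s,\gamma}(\mu)$, obtaining a self-improving inequality $\|B_{s,\gamma}(\mu)\|_{\frac{1}{1-\lambda},\gamma}\lesssim K^{1/r}\,\|B_{s,\gamma}(\mu)\|_{\frac{1}{1-\lambda},\gamma}^{1/r'}$ (first for compactly supported $\mu$, then by a limit). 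For $r<1$ one passes to $d\widetilde\mu=(1-|z|^2)^A\,d\mu$ to reduce to an exponent $m>1$. Your direct Rademacher idea \emph{can} be repaired---use Jensen for $q>1$ to get $\int_0^1[Sf_t(z)]^q\,dt\ge\big[\int_0^1 Sf_t(z)\,dt\big]^q$, then apply Khinchine to $\int_0^1|f_t(w)|^r\,dt$ \emph{inside} the $\mu$-integral, then localise---but that missing Jensen step is exactly the obstruction, and the subsequent exponents need to be recomputed accordingly.
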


\begin{proof}
Suppose first that $\mu$ is a $(\lambda,\gamma)$-Bergman Carleson measure. Consider a lattice $\{a_ j\}$ and its associated sets $\{D_ j\}$. Since $|1-\langle z,w \rangle |$ is comparable with $|1-\langle z,a_ j\rangle  |$ for $w$ in $D_ j$, we have
\begin{displaymath}
\begin{split}
|S_{\mu,\alpha_ 1}^r f(z)|
&\lesssim (1-|z|^2)^s \sum_{j=1}^{\infty} \int_{D_j} \frac{|f(w)|^r\,d\mu(w)}{|1-\langle z,w \rangle |^{n+1+s+\alpha_ 1}}
\\
&\asymp (1-|z|^2)^s \sum_{j=1}^{\infty}\frac{1}{|1-\langle z,a_j \rangle |^{n+1+s+\alpha_ 1}}\int_{D_ j} |f(w)|^r\,d\mu(w).
\end{split}
\end{displaymath}
Using the notation
$$
|\widehat{f}(a_ j)|
:=\left (\frac{1}{(1-|a_j|^2)^{n+1+\alpha_ 2}}\int_{\tilde{D}_ j} |f(\zeta)|^p\,dv_{\alpha_ 2}(\zeta) \right )^{1/p}
$$
we have
\begin{displaymath}
|f(w)|^r \lesssim |\widehat{f}(a_ j)|^r \quad w\in D_ j.
\end{displaymath}
This gives
\begin{displaymath}
|S_{\mu,\alpha_ 1}^r f(z)|^q
\lesssim (1-|z|^2)^{sq} \left ( \sum_{j=1}^{\infty} \frac{|\widehat{f}(a_ j)|^r\, \mu(D_ j)}{|1-\langle z,a_ j \rangle|^{n+1+s+\alpha_ 1}}\right )^q.
\end{displaymath}
Now, pick $\varepsilon>0$ so that $\alpha_ 1-\varepsilon \max(q,q')>-1$,
with $q'$ being the conjugate exponent of $q$, that is, $1/q+1/q'=1$. By H\"{o}lder's inequality with exponent $q>1$ we get
\begin{displaymath}
\begin{split}
\left ( \sum_{j=1}^{\infty}\frac{|\widehat{f}(a_ j)|^r\, \mu(D_ j)}{|1-\langle z,a_ j \rangle|^{n+1+s+\alpha_ 1}}\right )^q
&
\le \left ( \sum_{j=1}^{\infty} \frac{(1-|a_ j|^2)^{n+1+\alpha_ 1-\varepsilon q'}}{|1-\langle z, a_ j \rangle|^{n+1+s+\alpha_ 1}}\right )^{q-1}
\\
&
\,\,\,\times \left ( \sum_{j=1}^{\infty} \frac{|\widehat{f}(a_ j)|^{rq}\, \mu(D_ j)^q\,(1-|a_ j|^2)^{(n+1+\alpha_ 1)(1-q)+\varepsilon q}}
{|1-\langle z,a_ j \rangle |^{n+1+s+\alpha_ 1}}\right ).
\end{split}
\end{displaymath}
Since the sequence $\{a_ j\}$ is separated and $n+1+\alpha_ 1-\varepsilon q'>n$, using Lemma~\ref{l2} we have
$$
\sum_{j=1}^{\infty} \frac{(1-|a_ j|^2)^{n+1+\alpha_ 1-\varepsilon q'}}{|1-\langle z,a_ j \rangle |^{n+1+s+\alpha_ 1}}
\lesssim (1-|z|^2)^{-s-\varepsilon q'},
$$
and therefore
\begin{displaymath}
|S_{\mu,\alpha_ 1}^r f(z)|^q \lesssim (1-|z|^2)^{s-\varepsilon q}
\left( \sum_{j=1}^{\infty} \frac{|\widehat{f}(a_ j)|^{rq}\, \mu(D_ j)^q\,(1-|a_ j|^2)^{(n+1+\alpha_ 1)(1-q)+\varepsilon q}}
{|1-\langle z,a_j \rangle|^{n+1+s+\alpha_ 1}}\right).
\end{displaymath}
This together with the typical integral estimate in Lemma \ref{Ict} gives
\begin{displaymath}
\begin{split}
\|S_{\mu,\alpha_ 1}^r f\|_{q,\alpha_ 1}^q
&\lesssim \sum_{j=1}^{\infty} |\widehat{f}(a_ j)|^{rq}\, \mu(D_ j)^q\,(1-|a_ j|^2)^{(n+1+\alpha_ 1)(1-q)+\varepsilon q}
\\
& \,\, \,\,\times
\int_{\Bn} \frac{(1-|z|^2)^{s+\alpha_ 1-\varepsilon q}}{|1-\langle z,a_ j \rangle |^{n+1+s+\alpha_ 1}} dv(z)
\\
& \lesssim \sum_{j=1}^{\infty} |\widehat{f}(a_ j)|^{rq}\, \mu(D_ j)^q\,(1-|a_ j|^2)^{(n+1+\alpha_ 1)(1-q)} .
\end{split}
\end{displaymath}
\par If $\l \ge 1$, then $\mu(D_ j) \lesssim \|\mu\|_{\l,\g} (1-|a_ j|^2)^{(n+1+\g)\l}$
due to Theorem \ref{TA}. Moreover, the condition $\l \ge 1$ also implies $p/rq\le 1$, and therefore we have
\begin{displaymath}
\begin{split}
\|S_{\mu,\alpha_ 1}^r f\|_{q,\alpha_ 1}^q& \lesssim \|\mu\|_{\l,\g}^q \,
\sum_{j=1}^{\infty} |\widehat{f}(a_ j)|^{rq}\,(1-|a_ j|^2)^{(n+1+\alpha_ 2)rq/p}
\\
& \le \|\mu\|_{\l,\g}^q \,\left (\sum_{j=1}^{\infty} |\widehat{f}(a_j)|^{p}(1-|a_ j|^2)^{n+1+\alpha_ 2}\right )^{\frac{rq}{p}}.
\end{split}
\end{displaymath}

If $0<\l <1$ we use H\"{o}lder's inequality with exponent $p/rq>1$. Observe that the conjugate exponent of $p/(rq)$ is
$$
\frac{p/(rq)}{p/(rq)-1}=\frac{p}{p-rq}=\frac{1}{q(1-\lambda)}.
$$
We obtain, after an application of Theorem \ref{TB},
\begin{displaymath}
\begin{split}
\|S_{\mu,\alpha_ 1}^r f\|_{q,\alpha_ 1}^q
&\lesssim
\left (\sum_{j=1}^{\infty} |\widehat{f}(a_ j)|^{p}(1-|a_ j|^2)^{n+1+\alpha_ 2}\right )^{\frac{rq}{p}}
\left (\sum_{j=1}^{\infty} \left (\frac{\mu(D_ j)}{(1-|a_ j|^2)^{(n+1+\g)\l}} \right )^{\frac{1}{1-\l}}\right )^{q(1-\l)}
\\
& \lesssim \|\mu\|_{\l,\g}^q \,\left (\sum_{j=1}^{\infty}|\widehat{f}(a_ j)|^{p}(1-|a_ j|^2)^{n+1+\alpha_ 2}\right )^{\frac{rq}{p}}.
\end{split}
\end{displaymath}
Finally, in both cases, we obtain the inequality in part (b) after noticing that
$$\sum_{j=1}^{\infty} |\widehat{f}(a_ j)|^{p}(1-|a_ j|^2)^{n+1+\alpha_ 2}\lesssim \|f\|^p_{p,\a_ 2}.$$
\par Conversely, assume that (b) holds. We want to show that $\mu$ is a $(\l,\g)$-Bergman Carleson measure. We split the proof in two cases.

If $\l\ge 1$, for each $a\in \Bn$, consider the functions
$$f_ a(z)=(1-\langle z,a \rangle )^{-\sigma}$$
with $\sigma$ big enough, that is, with $p\sigma>n+1+\a_ 2$. By Lemma \ref{Ict} we have
$$ \|f_ a\|_{p,\a_ 2}^p =\int_{\Bn} \frac{dv_{\a _ 2}(z)}{|1-\langle z,a \rangle |^{p\sigma}} \lesssim (1-|a|^2)^{n+1+\a_ 2 -p\sigma}.$$
Also, for any $\tau>0$ we get
\begin{displaymath}
\begin{split}
\frac{(1-|z|^2)^s |f_ a(a)|^r}{|1-\langle z,a\rangle |^{n+1+s+\a _ 1}} \mu(D(a,\tau)) &\lesssim (1-|z|^2)^s\int_{D(a,\tau)} \frac{|f_ a(w)|^r\,d\mu(w)}{|1-\langle z,w\rangle |^{n+1+s+\a _ 1}}
\\
& \le S_{\mu,\a _ 1}^r f_ a(z).
\end{split}
\end{displaymath}
Moreover, since $v_{\a _ 1}(D(a,\tau))\asymp (1-|a|^2)^{n+1+\a_ 1}$, we have
\begin{displaymath}
\begin{split}
(1-|a|^2)^{(n+1+\a_ 1)(1-q)} & \lesssim \int_{D(a,\tau)} \left (\frac{(1-|z|^2)^s }{|1-\langle z,a\rangle |^{n+1+s+\a _ 1}}\right )^q\,dv_{\a _ 1}(z)
\\
&\le \int_{\Bn} \left (\frac{(1-|z|^2)^s }{|1-\langle z,a\rangle |^{n+1+s+\a _ 1}}\right )^q\,dv_{\a _ 1}(z).
\end{split}
\end{displaymath}
Hence,
\begin{displaymath}
\begin{split}
(1-|a|^2)^{(n+1+\a_ 1)(1-q)}\,&|f_ a(a)|^{rq}\,\mu(D(a,\tau))^q \lesssim \int_{\Bn}S_{\mu,\a _ 1}^r f_ a(z) ^q \,dv_{\a_ 1} (z)
\\
& \le K^q \,\|f_ a\|^{rq}_{p,\a _ 2} \lesssim K^q \Big ((1-|a|^2)^{n+1+\a_ 2 -p\sigma}\Big )^{rq/p}.
\end{split}
\end{displaymath}
This gives
\begin{displaymath}
\begin{split}
\mu(D(a,\tau)) &\lesssim K \, (1-|a|^2)^{(n+1+\a_ 1)\frac{(q-1)}{q}}\,(1-|a|^2)^{(n+1+\a_ 2)r/p}
\\
&=K\,(1-|a|^2)^{(n+1+\gamma)\lambda}.
\end{split}
\end{displaymath}
By Theorem \ref{TA} it follows that $\mu$ is a $(\l,\g)$-Bergman Carleson measure with $\|\mu\|_{\l,\g}\lesssim K$.\\

For  $0<\l<1$,
we split the proof in several cases.\\

\emph{Case $r=1$}: In that case, the condition easily implies that
the Toeplitz operator
$T_{\mu}^{\beta}:A^{p}_{\alpha_ 2}\rightarrow A^{q}_{\sigma}$ is bounded,
with
$$
\beta=s+\alpha_ 1 \quad  \textrm{and} \quad \sigma=\alpha_ 1 +sq.
$$
Therefore, part (a) is an immediate consequence of Theorem \ref{toeplitz1}. \\

\emph{Case $r>1$}: We want to show that $\mu$ is a $(\l,\g)$-Bergman Carleson measure,
or equivalently, that $B_{s,\gamma}(\mu)$ belongs to $L^{1/(1-\l),\g}$.
By Theorem \ref{TB} and the result on Toeplitz operators (Theorem \ref{toeplitz1}), we have
\begin{equation}\label{EqKL-2}
\|B_{s,\gamma}(\mu)\|_{\frac{1}{1-\lambda},\gamma}\le C \|T^{\beta}_{\mu}\|_{A^{p}_{\alpha_ 2}\rightarrow A^{t}_{\sigma}}
\end{equation}
with
\begin{displaymath}
\begin{array}{ccl}
\beta&=&\displaystyle{s+\frac{\alpha_ 1}{r}+\frac{\gamma (r-1)}{r} }\\
\\
t&=&\displaystyle{\frac{1}{1-\lambda+\frac{1}{p}}}
\end{array}
\end{displaymath}
and $\sigma$ is determined by the relation
$$
\gamma=\frac{1}{\lambda} \left (\beta+\frac{\alpha_ 2}{p}-\frac{\sigma}{t}\right ).
$$
Assume first that $\mu$ has compact support on $\Bn$.
By H\"{o}lder's inequality,
\begin{displaymath}
|T_{\mu}^{\beta} f(z)|^{t}
\le \left (\int_{\Bn} \frac{|f(w)|^{r}\, d\mu(w)}{|1-\langle z,w\rangle |^{n+1+s+\alpha_ 1}}\right )^{\frac{t}{r}}
\left (\int_{\Bn} \frac{d\mu(w)}{|1-\langle z,w \rangle |^{n+1+s+\gamma}} \right )^{\frac{t}{r'}},
\end{displaymath}
where $r'=\frac{r}{r-1}$ is the conjugate exponent of $r$. This yields
\begin{displaymath}
\|T_{\mu}^{\beta} f \|_{t,\sigma}^{t}
\le \int_{\Bn} \left (\int_{\Bn}\frac{(1-|z|^2)^s|f(w)|^{r} d\mu(w)}{|1-\langle z,w \rangle|^{n+1+s+\alpha_ 1}}\right )^{\frac{t}{r}}
\left (B_{s,\gamma} \mu (z) \right )^{\frac{t}{r'}} dv_{\sigma-st}(z).
\end{displaymath}
Now, since $\frac{r'}{(1-\l)t}>1$ (because $(1-\l)t<1$) we can apply H\"{o}lder's inequality again to obtain
\begin{displaymath}
\|T_{\mu}^{\beta} f \|_{t,\sigma}^{t} \le \big \|B_{s,\gamma}(\mu)\big \|_{\frac{1}{1-\lambda},\gamma}^{\frac{t}{r'}}\left [
\int_{\Bn} \!\!\left (\int_{\Bn} \!\frac{(1-|z|^2)^s|f(w)|^{r} d\mu(w)}{|1-\langle z,w \rangle |^{n+1+s+\alpha_ 1}}\right )^{q}
dv_{\gamma+(\sigma-st-\gamma)\eta}(z)\right ]^{1/\eta}.
\end{displaymath}
Observe that
$$\eta:=\left (\frac{r'}{(1-\l)\,t}\right )'=\frac{\frac{r'}{(1-\l)\,t}}{\frac{r'}{(1-\l)\,t}-1}=\frac{r'}{r'-(1-\l)\,t}
=\frac{r}{r-(1-\l)\,t\,(r-1)}$$
and therefore
\begin{displaymath}
\begin{split}
\frac{t \eta}{r} &=\frac{t}{r-(1-\l)\,t\,(r-1)}=\frac{1}{\frac{r}{t}-(1-\l)(r-1)}
\\
&=\frac{1}{r(1-\l +\frac{1}{p})-(1-\l)(r-1)}=\frac{1}{\frac{r}{p}+1-\l}=q.
\end{split}
\end{displaymath}
After some long and tedious but elementary computations it is possible to check that
\begin{equation}\label{D-Id}
\gamma+(\sigma-st-\gamma)\eta=\alpha_ 1.
\end{equation}
Indeed, since $1-\eta=\frac{-(1-\l)t(r-1)}{r-(1-\l)t(r-1)}$, the identity \eqref{D-Id} is equivalent to
$$ -\gamma (1-\l) t(r-1)+\sigma r -str=\a _ 1 [r-(1-\l)t(r-1)].$$
Using that $\sigma=\beta t +\a _ 2 t/p-\g \l t$ and the expression of $\beta$, after some simplifications we see that the previous identity is equivalent to
$$-\g \l t +\a_ 2 \,t\,\frac{r}{p}=\a_ 1 r -\a_ 1 tr (1-\l)-\a_ 1 \l t.$$
Now, using the expressions of $\l,\g$ given in \eqref{gl}, we must check that
$$\frac{\a _ 1 t}{p}=\a _ 1  -\a_ 1 t  (1-\l).$$
This is obvious if $\a_ 1=0$. If $\a_ 1\neq 0$ this is equivalent to
$$t=\frac{1}{1-\l +\frac{1}{p}},$$
and this is our choice of $t$. Hence \eqref{D-Id} holds.\\

Then, by our condition (b) we obtain
\begin{displaymath}
\begin{split}
\|T_{\mu}^{\beta} f\|_{t,\sigma}^{t} & \le \big \|B_{s,\gamma}(\mu)\big \|_{\frac{1}{1-\lambda},\gamma}^{\frac{t}{r'}}\cdot
  \big \|S_{\mu,\a_ 1}^rf \big \|_{q,\alpha_ 1}^{\frac{q }{\eta}}
\\
&\le K^{q/\eta}\cdot \big \|B_{s,\gamma}(\mu)\big \|_{\frac{1}{1-\lambda},\gamma}^{\frac{t}{r'}}\cdot  \|f\|_{p,\alpha_ 2}^{t}.
\end{split}
\end{displaymath}
This together with \eqref{EqKL-2} gives
$$\|B_{s,\gamma}(\mu)\|_{\frac{1}{1-\lambda},\gamma}\lesssim \|T_{\mu}^{\beta}\|\lesssim K^{q/\eta t}\cdot \big \|B_{s,\gamma}(\mu) \big \|_{\frac{1}{1-\lambda},\gamma}^{\frac{1}{r'}},$$
and since $q/\eta t=1/r$ this implies
$$\|B_{s,\gamma}(\mu)\|_{\frac{1}{1-\lambda},\gamma}\lesssim K$$
proving the result when $\mu$ has compact support on $\Bn$. The result for arbitrary $\mu$ follows from this by an easy limit argument.
\\

\emph{Case $r<1$}: Fix a number $m>1$ and consider the measure $\widetilde{\mu}$ given by
$$
d\widetilde{\mu}(z)=(1-|z|^2)^A\,d\mu(z)
$$
with
$$
A=(m-r)\frac{(n+1+\a_ 2)}{p}.
$$
Let
\begin{displaymath}
\begin{array}{ccl}
\gamma^*&=&\gamma +\frac{A}{\l}\\
\\
\beta&=&\displaystyle{s+\frac{\alpha_ 1}{m}+\frac{\gamma ^* (m-1)}{m} }\\
\\
t&=&\displaystyle{\frac{1}{1-\lambda+\frac{1}{p}}}
\end{array}
\end{displaymath}
and let $\sigma$  be determined by the relation
$$\gamma^*=\frac{1}{\lambda} \left (\beta+\frac{\alpha_ 2}{p}-\frac{\sigma}{t}\right ).$$
Again, assume first that $\mu$ has compact support on $\Bn$.
Obviously, then the measure $\widetilde{\mu}$ has also compact support.
By Theorem \ref{toeplitz1} applied to the Toeplitz operator $T^{\beta}_{\widetilde{\mu}}:A^p_{\a_ 2}\rightarrow A^t_{\sigma}$ we have
 \begin{equation}\label{EqKL-3}
\|B_{s,\gamma ^*}(\widetilde{\mu})\|_{\frac{1}{1-\lambda},\gamma ^*}\le C \|T^{\beta}_{\widetilde{\mu}}\|_{A^{p}_{\alpha_ 2}\rightarrow A^{t}_{\sigma}}.
\end{equation}
Arguing as in the previous case, we get
\begin{equation*}
\|T_{\widetilde{\mu}}^{\beta} f\|_{t,\sigma}^{t}  \le \big \|B_{s,\gamma ^*}(\widetilde{\mu})\big \|_{\frac{1}{1-\lambda},\gamma^*}^{\frac{t}{m'}}\cdot  \big \|S_{\widetilde{\mu},\a_ 1}^m f \big \|_{q_ 1,\alpha_ 1}^{\frac{t }{m}},
\end{equation*}
where $m'$ denotes the conjugate exponent of $m$ and
$$q_ 1=\frac{1}{\frac{m}{p}+1-\l}.$$
Since $m>r$ one has $q_ 1<(r/p+1-\l)^{-1}=q$, and hence
$$\|S_{\widetilde{\mu},\a_ 1}^m f\|_{q_ 1,\alpha_ 1}\le \|S_{\widetilde{\mu},\a_ 1}^m f\|_{q,\alpha_ 1}.$$
Therefore,
\begin{equation}\label{Eq-CII}
\|T_{\widetilde{\mu}}^{\beta} f\|_{t,\sigma}^{t}  \le \big \|B_{s,\gamma^*}(\widetilde{\mu})\big \|_{\frac{1}{1-\lambda},\gamma^*}^{\frac{t}{m'}}\cdot  \big \|S_{\widetilde{\mu},\a_ 1}^m f \big \|_{q,\alpha_ 1}^{\frac{t }{m}}.
\end{equation}
Now, applying the pointwise estimate for $f\in A^{p}_{\a_2}$, we obtain
\begin{displaymath}
\begin{split}
\|S_{\widetilde{\mu},\a_ 1}^m & f\|_{q,\alpha_ 1}^{q}
=\int_{\Bn} \left ((1-|z|^2)^s\int_{\Bn}\frac{|f(w)|^r\,|f(w)|^{m-r}}{|1-\langle z,w \rangle |^{n+1+s+\a_ 1}}
d\widetilde{\mu}(w)\right )^q \,dv_{\a_ 1}(z)
\\
& \le \big \|f \big \|_{p,\a_ 2}^{q(m-r)} \!\int_{\Bn}\!
\left ( \!(1-|z|^2)^s \! \int_{\Bn} \!\!
\frac{|f(w)|^r\, (1-|w|^2)^{A-(m-r)\frac{n+1+\a_ 2}{p}}}{|1-\langle z,w \rangle |^{n+1+s+\a_ 1}}
\,d\mu(w) \right )^q \!dv_{\alpha_ 1}(z)
\\
&=\big \|f \big \|_{p,\a_ 2}^{q(m-r)} \cdot \big \|S_{\mu,\a_ 1}^r f  \big \|^q_{q,\a_ 1}.
\end{split}
\end{displaymath}
Putting this into \eqref{Eq-CII} and using the inequality in part (b), we obtain
\begin{displaymath}
\begin{split}
\big \|T_{\widetilde{\mu}}^{\beta} f \big \|_{t,\sigma}
&\le \big \|B_{s,\gamma ^*}(\widetilde{\mu}) \big \|_{\frac{1}{1-\lambda},\gamma ^*}^{\frac{1}{m'}}\cdot
\big \|f \big \|_{p,\a_ 2}^{\frac{(m-r)}{m}} \cdot \big \|S_{\mu,\a_ 1}^r f \big \|^{1/m}_{q,\a_ 1}
\\
& \le K^{1/m}\cdot \big \|B_{s,\gamma ^*}(\widetilde{\mu})
\big \|_{\frac{1}{1-\lambda},\gamma ^*}^{\frac{1}{m'}}\cdot \big \|f \big \|_{p,\a_ 2}
\end{split}
\end{displaymath}
This together with \eqref{EqKL-3} yield
$$
\|B_{s,\gamma ^*}(\widetilde{\mu})\|_{\frac{1}{1-\lambda},\gamma ^*}\lesssim K^{1/m}\cdot \big \|B_{s,\gamma ^*}(\widetilde{\mu}) \big \|_{\frac{1}{1-\lambda},\gamma ^*}^{\frac{1}{m'}}
$$
which proves that
\begin{equation}\label{Eq-F4}
\|B_{s,\gamma ^*}(\widetilde{\mu})\|_{\frac{1}{1-\lambda},\gamma ^*}\lesssim K
\end{equation}
for $\mu$ with compact support on $\Bn$.
Then a standard limit argument gives \eqref{Eq-F4} for a general positive measure $\mu$.

Now, let $\{a_ k\}$ be any lattice in $\Bn$. Since $(n+1+\gamma ^*) \l -A=(n+1+\gamma) \l $,
applying Theorem \ref{TB} it follows that
\begin{displaymath}
\begin{split}
\|\mu\|_{\l,\g}^{1/(1-\l)}
&\lesssim \sum_ k \left (\frac{\mu(D_ k)}{(1-|a_ k|)^{(n+1+\gamma ^*)\lambda -A}}\right )^{\frac{1}{1-\l }}
\\
&\asymp \sum_ k \left (\frac{\widetilde{\mu}(D_ k)}{(1-|a_ k|)^{(n+1+\gamma ^*)\lambda}}\right )^{\frac{1}{1-\l }}
\\
&\lesssim \|\widetilde{\mu}\|_{\l,\g ^*}^{1/(1-\l)}
\lesssim  \big \|B_{s,\gamma ^*}(\widetilde{\mu})\big \|_{\frac{1}{1-\lambda},\gamma ^*}^{1/(1-\l)}.
\end{split}
\end{displaymath}
Then, from \eqref{Eq-F4} we have that $\mu$ is a $(\l,\g)$-Bergman Carleson measure with
$$\|\mu\|_{\l,\g} \lesssim K.$$
The proof is complete.
\end{proof}

\subsection{Proof of Theorem \ref{carleson}}
The following result together with Proposition \ref{Car-P1} concludes the proof of Theorem \ref{carleson}.
\begin{prop}\label{PNT1}
Let $\lambda>0$. If \eqref{eq1} holds, then $\mu$ is a $(\lambda,\gamma)$-Bergman Carleson measure. Furthermore, $\|\mu\|_{\l,\gamma}\lesssim C$, where $C$ is the constant appearing in \eqref{eq1}.
\end{prop}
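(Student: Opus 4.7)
My plan is to test \eqref{eq1} with carefully chosen functions, splitting into the cases $\l \ge 1$ and $0 < \l < 1$, matching the dichotomy between the characterizations of Bergman Carleson measures given in Theorem \ref{TA} and Theorem \ref{TB}.

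For $\l \ge 1$, I fix $a \in \Bn$ and plug the power-kernels $f_ i(w)=(1-\langle w,a \rangle )^{-\s_ i}$ with $\s_ i p_ i > n+1+\a_ i$ into \eqref{eq1}. By Lemma \ref{Ict}, $\|f_ i\|_{p_ i,\a_ i}^{q_ i}\lesssim (1-|a|^2)^{q_ i(n+1+\a_ i)/p_ i-\s_ i q_ i}$. Restricting the integral to $D(a,r)$, where $|1-\langle w,a\rangle |\asymp 1-|a|^2$, the $-\s_ i q_ i$ exponents cancel, and the identity $\sum_ i q_ i(n+1+\a_ i)/p_ i=(n+1+\g)\l$ yields $\mu(D(a,r))\lesssim C\,(1-|a|^2)^{(n+1+\g)\l}$. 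Theorem \ref{TA} then gives the $(\l,\g)$-Bergman Carleson property with $\|\mu\|_{\l,\g}\lesssim C$.

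For $0 < \l < 1$, Theorem \ref{TB} demands the stronger $\ell^{1/(1-\l)}$-summability of $\{\mu(D_ j)/(1-|a_ j|^2)^{(n+1+\g)\l}\}$, which a pointwise bound cannot deliver, so I invoke the just-proved Lemma \ref{ML}. Fixing any $i_ 0$ and setting $r=q_{i_ 0}$, $p=p_{i_ 0}$, $\a_ 2=\a_{i_ 0}$, $\a_ 1=A:=(\sum_{i\ne i_ 0}\a_ i q_ i/p_ i)/(\sum_{i\ne i_ 0}q_ i/p_ i)$, $1/q=1-\sum_{i\ne i_ 0}q_ i/p_ i$, and any $s>0$, a direct computation verifies $\l_{ML}=\l$ and $\g_{ML}=\g$; hence it suffices to establish the $S$-operator bound $\|S^{q_{i_ 0}}_{\mu,A}f\|_{q,A}\lesssim C\,\|f\|_{p_{i_ 0},\a_{i_ 0}}^{q_{i_ 0}}$. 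Plugging $f_{i_ 0}=f$ together with the kernels $f_ i(w)=(1-\langle w,z\rangle )^{-\s_ i}$, $i\ne i_ 0$ (with $\sum_{i\ne i_ 0}\s_ i q_ i=n+1+s+A$) into \eqref{eq1} produces only the pointwise bound $S^{q_{i_ 0}}_{\mu,A}f(z)\lesssim \|f\|^{q_{i_ 0}}(1-|z|^2)^{-(n+1+A)/q}$, which sits exactly on the critical exponent for $L^q(dv_A)$-integrability. To break this threshold I replace each such test kernel by a Rademacher-randomized sum $\sum_ j\lambda^{(i)}_ j r_ j(t_ i)k^{(i)}_{a_ j}$ over an $r$-lattice $\{a_ j\}$, apply \eqref{eq1}, integrate out the independent Rademacher parameters $t_ i$ via Khinchine's inequality, and sum over the disjoint balls $D(a_ j,r/4)$. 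With $\lambda^{(i)}_ j$ chosen so that the post-Khinchine kernel-squares coincide across all $i\ne i_ 0$, the $k$-function hypothesis \eqref{eq1} collapses into the desired $L^q$-inequality, and Lemma \ref{ML} delivers $\|\mu\|_{\l,\g}\lesssim C$.

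The main obstacle is the second case: the single-point kernel test lies exactly on the boundary of $L^q(dv_A)$-integrability, so the Rademacher/lattice averaging is indispensable. The delicate technical step is choosing the randomization coefficients to decouple the $(1-|a_ j|^2)$-weights uniformly across the indices $i\ne i_ 0$, which is precisely what lets the multilinear hypothesis \eqref{eq1} collapse into the single-function $S$-operator form demanded by Lemma \ref{ML}.
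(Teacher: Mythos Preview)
Your treatment of the case $\l \ge 1$ is correct and essentially identical to the paper's argument (the paper uses normalized kernels and the integral form of Theorem~\ref{TA} rather than restricting to $D(a,r)$, but this is cosmetic).

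For $0 < \l < 1$, you correctly reduce the problem, via Lemma~\ref{ML} with exactly the right parameters, to the single-function inequality
\[
\|S^{q_{i_0}}_{\mu,A} f\|_{q,A} \lesssim C\,\|f\|_{p_{i_0},\a_{i_0}}^{q_{i_0}}, \qquad f \in A^{p_{i_0}}_{\a_{i_0}},
\]
with $A = \g_{k-1}$ and $q = 1/(1-\l_{k-1})$ (taking $i_0 = k$); this is precisely the pivotal estimate in the paper's proof. However, your Rademacher/Khinchine route to this inequality has a genuine gap. After randomizing the $k-1$ test functions over a lattice and averaging, you obtain an estimate of the shape
\[
\int_{\Bn} |f(w)|^{q_{i_0}} \Big(\sum_j c_j\, |k_{a_j}(w)|^2\Big)^{Q/2}\, d\mu(w)
\;\lesssim\; C\,\|f\|^{q_{i_0}} \cdot (\text{sequence norms}),
\]
which is still a \emph{single} integral against $d\mu$. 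Localizing to the disjoint balls $D(a_j,r/4)$ only extracts information about the quantities $\int_{D_j} |f|^{q_{i_0}}\,d\mu$, i.e.\ about the measure $|f|^{q_{i_0}}\,d\mu$; the outer integral against $dv_A(z)$ needed for $\|S^{q_{i_0}}_{\mu,A}f\|_{q,A}$ simply never appears. The phrase ``collapses into the desired $L^q$-inequality'' hides a step I do not see how to fill directly.

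The paper instead uses a clean induction on $k$. Fixing $f_k$ and varying $f_1,\dots,f_{k-1}$, inequality \eqref{eq1} becomes the $(k-1)$-function version of \eqref{eq1} for the measure $d\mu_k := |f_k|^{q_k}\,d\mu$, with constant $C\|f_k\|_{p_k,\a_k}^{q_k}$. By the induction hypothesis, $\mu_k$ is a $(\l_{k-1},\g_{k-1})$-Bergman Carleson measure with $\|\mu_k\|_{\l_{k-1},\g_{k-1}} \lesssim C\|f_k\|_{p_k,\a_k}^{q_k}$. The key observation is that, by the Berezin-transform characterization in Theorem~\ref{TB}, this Carleson bound on $\mu_k$ is \emph{exactly} $\|B_{s,\g_{k-1}}(\mu_k)\|_{1/(1-\l_{k-1}),\g_{k-1}} \lesssim C\|f_k\|^{q_k}$, and since $B_{s,\g_{k-1}}(\mu_k) = S^{q_k}_{\mu,\g_{k-1}} f_k$ this \emph{is} the required $S$-operator inequality. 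No Rademacher machinery is needed; the outer $dv_A$-integral you were missing is supplied for free by Theorem~\ref{TB}.
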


\begin{proof}
Assume first that $\l\ge 1$.
Let
$$
f_{i,a}(z)=\frac{(1-|a|^2)^{(n+1+\a_i)/p_i}}{(1-\langle z,a\rangle)^{2(n+1+\a_i)/p_i}}.
$$
Then it can be easily checked that
for every $a\in \Bn$ and for all $i=1,2,\dots,k$, $\|f_{i,a}\|_{p_i,\a_i}\lesssim 1$.
Thus \eqref{eq1} implies
\begin{equation}\label{eq4}
\int_{\Bn}\prod_{i=1}^k |f_{i,a}(z)|\,d\mu(z)
\le C\prod_{i=1}^k\|f_{i,a}\|_{p_i,\a_i}^{q_i}=C,
\end{equation}
where $C$ is a positive constant independent of $a$.
An easy computation shows that
$$
\sum_{i=1}^k(n+1+\a_i)\,\frac{q_i}{p_i}=(n+1+\g)\l.
$$
Thus (\ref{eq4}) is equivalent to
$$
\int_{\Bn}\frac{(1-|a|^2)^{(n+1+\g)\l}}{|1-\langle z,a\rangle|^{2(n+1+\g)\l}}\,d\mu(z)\le C.
$$
Since $\l\ge1$, by Theorem A,
we know that $\mu$ is a $(\l,\g)$-Bergman Carleson measure with $\|\mu\|_{\l,\gamma}\lesssim C$.\\

Next, we consider the case $0<\lambda<1$. We use induction on $k$.
If $k=1$ then \eqref{eq1} is just the definition of a Bergman Carleson measure.
Now, let $k\ge 2$ and assume that the result holds for $k-1$ functions.
Set $\lambda_k=\lambda$; $\gamma_k=\gamma$ and
$$
\lambda_{k-1}=\sum_{i=1}^{k-1}\frac{q_i}{p_i}; \qquad
\g _{k-1}=\frac1{\l _{k-1}}\sum_{i=1}^{k-1}\frac{\a _iq_i}{p_i}.
$$
Considering the measure
$$
d\mu_k(z)=|f_k(z)|^{q_k} \,d\mu(z)
$$
we see that our condition
$$
\int_{\Bn} \prod_{i=1}^{k}|f_i(z)|^{q_i}\,d\mu(z)
\le C \prod_{i=1}^{k}\|f_i\|_{p_i,\a_i}^{q_i}
$$
is equivalent to the condition
$$
\int_{\Bn} \prod_{i=1}^{k-1}|f_i(z)|^{q_i}\,d\mu_k(z)
\le C(f_k)\, \prod_{i=1}^{k-1}\|f_i\|_{p_i,\a_i}^{q_i}
$$
with $C(f_k)=C\cdot \|f_k\|_{p_k,\a_k}^{q_k}$.
By induction, this implies that $\mu_k$ is a $(\lambda_{k-1},\gamma_{k-1})$-Bergman Carleson measure
with $\|\mu_k\|_{\l_{k-1},\gamma_{k-1}}\lesssim C(f_k)$.
Since $0<\l_{k-1}<\l<1$, then Theorem B implies that $B_{s,\gamma_{k-1}}(\mu_k)$
belongs to $L^{1/(1-\lambda_{k-1}),\gamma_{k-1}}$ for any $s>0$ with
$$
\big \|B_{s,\gamma_{k-1}}(\mu_k)\big \|_{\frac{1}{1-\lambda_{k-1}},\gamma_{k-1}} \lesssim C(f_k).
$$
That is, we have
\begin{equation*}
\int_{\Bn} \left ( \int_{\Bn} \frac{(1-|z|^2)^s
|f_k(w)|^{q_k} d\mu(w)}{|1-\langle z,w \rangle |^{n+1+s+\gamma_{k-1}}}\right )^{\frac{1}{1-\l_{k-1}}} dv_{\gamma_{k-1}}(z)
\lesssim \Big (C\cdot \|f_k\|_{p_k,\alpha_k}^{q_k}\Big)^{\frac{1}{1-\l_{k-1}}},
\end{equation*}
or equivalently,
\begin{equation*}
\|S^{q_k}_{\mu,\g _{k-1}} f_k \|_{\frac{1}{1-\l_{k-1}},\gamma_{k-1}}
\lesssim C\cdot \|f_k\|_{p_k,\alpha_k}^{q_k}
\end{equation*}
whenever $f_k$ is in $A^{p_k}_{\a_k}$. Thus, by Lemma \ref{ML},
the measure $\mu$ is an $(\l^*,\g^*)$-Bergman Carleson measure with $\|\mu\|_{\l ^*,\g ^*}\lesssim C$, where
$$
\l^*=1+\frac{q_k}{p_k}-(1-\l_{k-1}),\quad \textrm{and}
\quad \gamma^* =\frac{1}{\lambda^*}\Big(\g_{k-1}+\frac{\alpha_k q_k}{p_k}-\g_{k-1}(1-\l _{k-1}) \Big).
$$
Simple algebraic manipulations shows that $\l^*=\l$ and $\g^*=\g$ concluding the proof.
\end{proof}

\section{Vanishing $(\l,\g)$-Bergman Carleson measures}
\label{v-b-carleson}

We say that $\mu$ is a vanishing $(\lambda,\a)$-Bergman Carleson measure
if for any two positive numbers $p$ and $q$ satisfying $q/p=\lambda$
and any sequence $\{f_k\}$ in $A^p_{\a}$ with $\|f_k\|_{p,\a}\le 1$ and $f_k(z)\to0$
uniformly on any compact subset of $\Bn$,
$$
\lim_{k\to\infty}\int_{\Bn} |f_k(z)|^q\,d\mu(z)=0.
$$
It is well-known that, for $\lambda\ge 1$,
$\mu$ is a vanishing $(\lambda,\a)$-Bergman Carleson measure
if and only if
\begin{equation}\label{v-carleson0}
\lim_{|a|\to1}\int_{\Bn}\frac{(1-|a|^2)^t}{|1-\langle z, a \rangle |^{{(n+1+\a)\l}+t}}\,d\mu(z)=0
\end{equation}
for some (any) $t>0$.
It is also well-known that, for $0<\l<1$, $\mu$ is a vanishing
$(\lambda,\a)$-Bergman Carleson measure if and only if
it is a $(\lambda,\a)$-Bergman Carleson measure.
We refer to \cite{zz} for the above facts.

\begin{thm}\label{v-carleson}
Let $\mu$ be a positive Borel measure on $\Bn$.
For any integer $k\ge 1$ and $i=1,2,...,k$,
let $0<p_i,q_i<\infty$ and $-1<\a_i<\infty$.
Let
$$
\l=\sum_{i=1}^k\frac{q_i}{p_i}; \qquad
\g=\frac1{\l}\sum_{i=1}^k\frac{\a _iq_i}{p_i}.
$$
Then the following statements are equivalent.
\begin{itemize}
\item[(i)] $\mu$ is a vanishing $(\l,\g)$-Bergman Carleson measure.
\item[(ii)] For any sequence $\{f_{1,l}\}$ in the unit ball of $A^{p_1}_{\a_1}$
which is convergent to $0$ uniformly in compact subsets of $\Bn$,
$$
\lim_{l\to\infty}\sup_{f_i\in A^{p_i}_{\a_i}, \|f_i\|_{p_i,\a_i}\le1, i=2,3,...,k}
\int_{\Bn}|f_{1,l}(z)|^{q_1}|f_2(z)|^{q_2}\cdots|f_k(z)|^{q_k}\,d\mu(z)=0.
$$
\item[(iii)] For any $k$ sequences $\{f_{1,l}\}$, $\{f_{2,l}\}$, ..., $\{f_{k,l}\}$
in the unit balls of $A^{p_1}_{\a_1}$, $A^{p_2}_{\a_2}$, ..., $A^{p_k}_{\a_k}$, respectively,
which are all convergent to $0$ uniformly in compact subsets of $\Bn$,
$$
\lim_{l\to\infty}
\int_{\Bn}|f_{1,l}(z)|^{q_1}|f_{2,l}(z)|^{q_2}\cdots|f_{k,l}(z)|^{q_k}\,d\mu(z)=0.
$$
\end{itemize}
\end{thm}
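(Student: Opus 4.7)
The plan is to establish the chain $(i)\Rightarrow(ii)\Rightarrow(iii)\Rightarrow(i)$. The step $(ii)\Rightarrow(iii)$ is immediate: if $\{f_{i,l}\}_{i=1}^k$ are sequences in the respective unit balls converging to $0$ uniformly on compacta, then for each $l$ the tuple $(f_{2,l},\dots,f_{k,l})$ is admissible in the supremum appearing in (ii), so the integral in (iii) is dominated by that supremum, which tends to $0$.

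For $(i)\Rightarrow(ii)$, the key auxiliary claim is that $\|\mu_r\|_{\lambda,\gamma}\to 0$ as $r\to 1^-$, where $\mu_r:=\mu\cdot\chi_{\{|z|>r\}}$. When $\lambda\ge 1$ this follows from \eqref{v-carleson0}: for $|a|$ close to $\partial\Bn$ the kernel integral against $\mu_r$ is dominated by the vanishing quantity, while for $|a|$ in a compact set the kernel is bounded and $\mu(\{|z|>r\})\to 0$ (note $\mu$ is finite, by plugging $f\equiv 1$ into the BCM definition). When $0<\lambda<1$, vanishing BCM coincides with BCM, and Theorem \ref{TB} identifies $\|\mu\|_{\lambda,\gamma}$ with the $\ell^{1/(1-\lambda)}$-norm of $\{\mu(D_k)/(1-|a_k|^2)^{(n+1+\gamma)\lambda}\}$, whose tail over $\{k:|a_k|>r'\}$ decays as $r'\to 1$. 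Given this, I split
$$
\int_{\Bn}|f_{1,l}(z)|^{q_1}\prod_{i=2}^{k}|f_i(z)|^{q_i}\,d\mu(z)
=\int_{|z|\le r}+\int_{|z|>r}.
$$
The outer integral is at most $C\,\|\mu_r\|_{\lambda,\gamma}\prod_i\|f_i\|_{p_i,\alpha_i}^{q_i}$ by Theorem \ref{carleson} applied to $\mu_r$, uniformly over the $f_i$ in the unit balls. The inner integral is at most $\bigl(\sup_{|z|\le r}|f_{1,l}(z)|^{q_1}\bigr)$ times an integral finite by the pointwise bound $|f_i(z)|\lesssim(1-|z|^2)^{-(n+1+\alpha_i)/p_i}$ and local finiteness of $\mu$, and this supremum tends to $0$ as $l\to\infty$ for fixed $r$. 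A standard $\varepsilon/2$ argument concludes.

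For $(iii)\Rightarrow(i)$, I would use the normalized test functions from Proposition \ref{PNT1},
$$
\tilde f_{i,a}(z)=\frac{(1-|a|^2)^{(n+1+\alpha_i)/p_i}}{(1-\langle z,a\rangle)^{2(n+1+\alpha_i)/p_i}},
$$
which satisfy $\|\tilde f_{i,a}\|_{p_i,\alpha_i}\lesssim 1$ and $\tilde f_{i,a}\to 0$ uniformly on compact subsets of $\Bn$ as $|a|\to 1$. Feeding $f_{i,l}=\tilde f_{i,a_l}$ (rescaled to the unit balls) with $a_l\to\partial\Bn$ into (iii), and using the identity $\sum_{i}(n+1+\alpha_i)q_i/p_i=(n+1+\gamma)\lambda$, gives
$$
\lim_{|a|\to 1}\int_{\Bn}\frac{(1-|a|^2)^{(n+1+\gamma)\lambda}}{|1-\langle z,a\rangle|^{2(n+1+\gamma)\lambda}}\,d\mu(z)=0.
$$
For $\lambda\ge 1$ this is precisely \eqref{v-carleson0} with $t=(n+1+\gamma)\lambda$, giving (i). For $0<\lambda<1$, since vanishing BCM equals BCM, it suffices to check that the above integral is bounded as a function of $a\in\Bn$: (iii) already forces $\mu(\Bn)<\infty$ by applying it to the constant sequence $\varepsilon_l\cdot 1$ with $\varepsilon_l\to 0$ (otherwise the integrals equal $\varepsilon_l^{\sum q_i}\mu(\Bn)=\infty$); dominated convergence then shows the integrand is continuous in $a$, and its vanishing at the boundary forces uniform boundedness. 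Theorem \ref{TA} concludes.

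The main technical point is the decay $\|\mu_r\|_{\lambda,\gamma}\to 0$ used in $(i)\Rightarrow(ii)$ when $0<\lambda<1$, which must be extracted from the $\ell^{1/(1-\lambda)}$-summability rather than a pointwise statement; once this and the finiteness of $\mu$ from (iii) are in place, the rest faithfully adapts the templates of Theorem \ref{carleson} and Proposition \ref{PNT1}.
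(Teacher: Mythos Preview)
Your argument for $(i)\Rightarrow(ii)\Rightarrow(iii)$ is correct and essentially matches the paper's, including the splitting into $\{|z|\le r\}$ and $\{|z|>r\}$ and the fact that $\|\mu_r\|_{\lambda,\gamma}\to 0$. (Your handling of the $0<\lambda<1$ case via tail decay of the $\ell^{1/(1-\lambda)}$ sequence is in fact more explicit than the paper, which dispatches that range in one sentence.) Likewise, for $\lambda\ge 1$ your $(iii)\Rightarrow(i)$ via the test functions $\tilde f_{i,a}$ is exactly the paper's argument.

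The genuine gap is your $(iii)\Rightarrow(i)$ when $0<\lambda<1$. You deduce from (iii) that
\[
\sup_{a\in\Bn}\int_{\Bn}\frac{(1-|a|^2)^{(n+1+\gamma)\lambda}}{|1-\langle z,a\rangle|^{2(n+1+\gamma)\lambda}}\,d\mu(z)<\infty
\]
and then invoke Theorem~\ref{TA}. But Theorem~\ref{TA} is stated only for $0<p\le q$, i.e.\ for $\lambda\ge 1$; when $0<\lambda<1$ the Berezin-type supremum above is strictly weaker than the $(\lambda,\gamma)$-Bergman Carleson condition. For instance, on the disk with $\gamma=0$ and $\lambda=1/2$, the measure $d\mu=(1-|z|^2)^{-1}\,dA$ satisfies $\mu(D(a,r))\asymp(1-|a|^2)=(1-|a|^2)^{(n+1+\gamma)\lambda}$ and hence your supremum condition, yet $\widehat{\mu}_r(z)\asymp(1-|z|^2)^{-1}\notin L^{2}(dA)=L^{1/(1-\lambda),\gamma}$, so $\mu$ is not a $(1/2,0)$-Bergman Carleson measure by Theorem~\ref{TB}.

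The paper avoids this by reducing the whole $0<\lambda<1$ case to Theorem~\ref{carleson}. Concretely, one argues by contraposition: if $\mu$ is not a $(\lambda,\gamma)$-Bergman Carleson measure, Theorem~\ref{carleson} yields tuples $f_i^{(l)}$ in the unit balls with $\int_{\Bn}\prod_i|f_i^{(l)}|^{q_i}\,d\mu\ge l$. Setting $g_i^{(l)}=l^{-1/(2\sum_j q_j)}f_i^{(l)}$, the pointwise estimate $|f_i^{(l)}(z)|\lesssim (1-|z|^2)^{-(n+1+\alpha_i)/p_i}$ forces $g_i^{(l)}\to 0$ uniformly on compacta, while $\int\prod_i|g_i^{(l)}|^{q_i}\,d\mu\ge l^{1/2}\to\infty$, contradicting (iii). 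Replacing your last paragraph with this scaling argument closes the gap.
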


\begin{proof} By the remark preceding the statement of the Theorem,
the case $0<\l<1$ is just a consequence of Theorem \ref{carleson}.
So, we assume that $\l \ge 1$.
Let (i) be true, so $\mu$ is a vanishing $(\l,\g)$-Bergman Carleson measure.
Let $\{f_{1,l}\}$ be a sequence in the unit ball of $A^{p_1}_{\a_1}$
which is convergent to $0$ uniformly in compact subsets of $\Bn$,
and let $\{f_i\}$ be arbitrary functions in the unit balls of $A^{p_i}_{\a_i}$,
$i=2,3,...k$.

Let $\mu_r=\mu|_{\Bn\setminus \overline D_r}$, where $D_r=\{z\in\Bn\,:\,|z|<r\}$.
Then $\mu_r$ is also a $(\l,\g)$-Bergman Carleson measure, and
$$
\lim_{r\to1}\|\mu_r\|_{\l,\g}=0.
$$
(See, p.130 of \cite{cm}.)
Hence
\begin{eqnarray}\label{eq6}
&~&\int_{\Bn\setminus \overline{D}_r} |f_{1,l}(z)|^{q_1}|f_2(z)|^{q_2}
\cdots|f_k(z)|^{q_k}\,d\mu(z)
\\
&~&\qquad\le\int_{\Bn}|f_{1,l}(z)|^{q_1}|f_2(z)|^{q_2}
\cdots|f_k(z)|^{q_k}\,d\mu_r(z) \notag
\\
&~&\qquad\le C\|\mu_r\|_{\l,\g}
\le C\e, \notag
\end{eqnarray}
as $r$ sufficiently close to $1$. Fix such an $r$. Since
$\{f_{1,l}\}$ converges to $0$ uniformly in compact subsets of $\Bn$,
there is a constant $K>0$ such that for any $l>K$, $|f_{1,l}(z)|<\e$ for any
$z\in \overline D_r$. Therefore, using Theorem \ref{carleson},
\begin{eqnarray}\label{eq7}
&~&\int_{\overline D_r} |f_{1,l}(z)|^{q_1}|f_2(z)|^{q_2}\cdots|f_k(z)|^{q_k}\,d\mu(z)
\\
&~&\qquad\le \e\int_{\Bn} |f_2(z)|^{q_2}\cdots|f_k(z)|^{q_k}\,d\mu(z) \notag
\\
&~&\qquad=\e\int_{\Bn} |1|^{q_1} |f_2(z)|^{q_2}\cdots|f_k(z)|^{q_k}\,d\mu(z) \notag
\\
&~&\qquad\lesssim\e\|1\|_{p_1,\a_1}^{p_1}\|f_2\|_{p_2,\a_2}^{p_2}\cdots\|f_k\|_{p_k,\a_k}^{p_k}
\lesssim\e, \notag
\end{eqnarray}
for any $z\in\overline D_r$. Combining (\ref{eq6}) and (\ref{eq7}) we get (ii).

It is obvious that (ii) implies (iii). Now let (iii) be true.
Let
$$
f_{i,a}(z)=\frac{(1-|a|^2)^{(n+1+\a_i)/p_i}}{(1-\langle z,a\rangle)^{2(n+1+\a_i)/p_i}}.
$$
Then, as before, we know that
for every $a\in \Bn$ and for all $i=1,2,...,k$, $\|f_{i,a}\|_{p_i,\a_i}\lesssim 1$
and it can be easily checked that
$$
\lim_{|a|\to1}|f_{i,a}(z)|=0
$$
uniformly on any compact subset of $\Bn$.
Thus (iii) implies
$$
\lim_{|a|\to1}\int_{\Bn}\prod_{i=1}^k
\frac{(1-|a|^2)^{(n+1+\a_i)q_i/p_i}}{|1-\langle z,a\rangle|^{2(n+1+\a_i)q_i/p_i}}\,d\mu(z)=0.
$$
Since $\sum_{i=1}^k(n+1+\a_i)q_i/p_i=(n+1+\g){\l}$,
the above equality is the same as
$$
\lim_{|a|\to1}\int_{\Bn}
\frac{(1-|a|^2)^{(n+1+\g)\l}}{|1-\langle z,a\rangle|^{2(n+1+\g)\l}}\,d\mu(z)=0.
$$
Thus by (\ref{v-carleson0}), $\mu$ is a vanishing $(\l,\g)$-Bergman Carleson measure.
The proof is complete.
\end{proof}

Vanishing Bergman Carleson measures are also useful in order to describe the compactness of Toeplitz operators between weighted Bergman spaces.
\begin{thm}
Let $\mu$ be a positive Borel measure on $\Bn$, $0<p_1, p_2<\infty$, and $-1<\a_1,\a_2<\infty$. Let $\b,\l$ and $\g$ be as in Theorem \ref{toeplitz1}. Then
$T_{\mu}^{\b}$ is compact from $A^{p_1}_{\a_1}$ to $A^{p_2}_{\a_2}$ if and only if $\mu$ is a vanishing $(\l,\g)$-Bergman Carleson measure.
\end{thm}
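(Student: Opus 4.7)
The plan is to prove both directions by separating the cases $\lambda\ge 1$ and $0<\lambda<1$, with the common pivot being that in either case $\mu$ is a vanishing $(\lambda,\gamma)$-Bergman Carleson measure if and only if
\begin{equation*}
\|\mu_r\|_{\lambda,\gamma}\longrightarrow 0\quad\text{as}\quad r\to 1^-,
\end{equation*}
where $\mu_r:=\mu|_{\Bn\setminus\overline{D_r}}$ and $D_r=\{z\in\Bn:|z|<r\}$.

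For the necessity, suppose $T_\mu^\beta$ is compact. When $0<\lambda<1$ compactness implies boundedness, so by Theorem \ref{toeplitz1} $\mu$ is a $(\lambda,\gamma)$-Bergman Carleson measure, which, as recalled in the paragraph preceding Theorem \ref{v-carleson}, is the same as being a vanishing one in this range. When $\lambda\ge 1$ I would imitate Case~1 of the proof of Theorem \ref{toeplitz1}: the normalized test functions
\begin{equation*}
g_a(z)=\frac{(1-|a|^2)^{(n+1+\beta)-(n+1+\alpha_1)/p_1}}{(1-\langle z,a\rangle)^{n+1+\beta}}
\end{equation*}
are uniformly bounded in $A^{p_1}_{\alpha_1}$, and the assumption $n+1+\beta>n\max(1,1/p_1)+(1+\alpha_1)/p_1$ makes the exponent in the numerator strictly positive, so $g_a\to 0$ uniformly on compact subsets of $\Bn$ as $|a|\to 1$. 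Compactness of $T_\mu^\beta$ then forces $\|T_\mu^\beta g_a\|_{p_2,\alpha_2}\to 0$, and combining the pointwise Bergman estimate at $z=a$ with the lower bound $T_\mu^\beta g_a(a)\gtrsim (1-|a|^2)^{-\tau}\mu(D(a,r))$ taken directly from the computation in Theorem \ref{toeplitz1} yields
\begin{equation*}
\mu(D(a,r))=o\bigl((1-|a|^2)^{(n+1+\gamma)\lambda}\bigr)\quad\text{as}\quad |a|\to 1^-,
\end{equation*}
which by Theorem A is precisely the vanishing Carleson condition.

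For the sufficiency, let $\{f_m\}$ be a bounded sequence in $A^{p_1}_{\alpha_1}$ converging to zero uniformly on compact subsets of $\Bn$. I would split $\mu=\mu|_{\overline{D_r}}+\mu_r$. For $w\in\overline{D_r}$ one has $|1-\langle z,w\rangle|\ge 1-r$ uniformly in $z\in\Bn$, so
\begin{equation*}
|T_{\mu|_{\overline{D_r}}}^\beta f_m(z)|\le\frac{\mu(\overline{D_r})}{(1-r)^{n+1+\beta}}\sup_{\overline{D_r}}|f_m|,
\end{equation*}
and since the constant function lies in $A^{p_2}_{\alpha_2}$, this first piece tends to $0$ in $A^{p_2}_{\alpha_2}$-norm as $m\to\infty$ for each fixed $r$. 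For the second piece, the quantitative bound in Theorem \ref{toeplitz1} gives
\begin{equation*}
\|T_{\mu_r}^\beta f_m\|_{p_2,\alpha_2}\lesssim \|\mu_r\|_{\lambda,\gamma}\,\|f_m\|_{p_1,\alpha_1},
\end{equation*}
so the whole argument reduces to $\|\mu_r\|_{\lambda,\gamma}\to 0$ as $r\to 1^-$. For $\lambda\ge 1$ this follows from \eqref{v-carleson0} and Theorem A by a two-region split: points $a$ close to the boundary inherit the smallness from \eqref{v-carleson0}, while for $a$ with $|a|$ bounded away from $1$ the Bergman ball $D(a,r_0)$ is a fixed compact set that is eventually disjoint from the support of $\mu_r$. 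For $0<\lambda<1$ the vanishing hypothesis is the same as plain Carleson, and I would instead invoke Theorem B: the Carleson norm of $\mu_r$ is comparable to $\|\widehat{(\mu_r)}_{r_0}\|_{1/(1-\lambda),\gamma}$, and $\widehat{(\mu_r)}_{r_0}(z)\to 0$ pointwise as $r\to 1^-$ while being dominated by the fixed $L^{1/(1-\lambda),\gamma}$-function $\widehat{\mu}_{r_0}$, so dominated convergence finishes the job.

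The main obstacle is the $0<\lambda<1$ case of sufficiency, where the hypothesis delivers nothing beyond plain boundedness and one must manufacture smallness of $\|\mu_r\|_{\lambda,\gamma}$ purely from the integrability of the Berezin-type density of $\mu$; the dominated convergence argument through Theorem B described above is what makes this work cleanly, and everything else is a routine adaptation of Theorem \ref{toeplitz1} combined with the ``compact part versus boundary part'' decomposition of $\mu$.
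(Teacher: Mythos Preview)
Your proof is correct. The necessity argument matches the paper's: for $0<\lambda<1$ you both reduce to Theorem~\ref{toeplitz1} via ``vanishing $=$ Carleson'', and for $\lambda\ge 1$ you both test against the normalized kernels $g_a$ and read off $\mu(D(a,r))=o\big((1-|a|^2)^{(n+1+\gamma)\lambda}\big)$.

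For sufficiency, however, your route is genuinely different and more uniform than the paper's. The paper splits into three cases: when $0<\lambda<1$ it invokes the abstract Banach space fact that every bounded operator $\ell^{p_1}\to\ell^{p_2}$ with $p_1>p_2$ is compact (together with the isomorphism $A^p_\alpha\simeq\ell^p$); when $\lambda\ge 1$ and $p_2>1$ it uses duality and Theorem~\ref{v-carleson}; and when $\lambda\ge 1$ with $0<p_2\le 1$ it returns to the lattice estimate~\eqref{t-mu2} and splits the sum. Your single decomposition $\mu=\mu|_{\overline{D_r}}+\mu_r$, combined with the quantitative bound $\|T_{\mu_r}^\beta\|\lesssim\|\mu_r\|_{\lambda,\gamma}$ from Theorem~\ref{toeplitz1} and the claim $\|\mu_r\|_{\lambda,\gamma}\to 0$, handles all cases at once. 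The dominated convergence argument through Theorem~\ref{TB} for $0<\lambda<1$ is exactly what supplies the missing smallness there, and for $\lambda\ge 1$ the paper itself cites the fact $\|\mu_r\|_{\lambda,\gamma}\to 0$ (see \cite[p.~130]{cm}) in the proof of Theorem~\ref{v-carleson}. What your approach buys is that it is self-contained and avoids the external $\ell^p$-compactness theorem; what the paper's approach buys in the $0<\lambda<1$ case is brevity, since the abstract result immediately upgrades boundedness to compactness with no further estimate.
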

\begin{proof}
If $0<\l<1$ then by the remark preceding Theorem \ref{v-carleson}, a vanishing $(\l,\g)$-Bergman Carleson measure is the same as a $(\l,\g)$-Bergman Carleson measure. Also, since $0<\l\le 1$, then $0<p_ 2<p_ 1<\infty$, and therefore the result follows from Theorem \ref{toeplitz1} since in that case $T_{\mu}^{\b}$ is compact from $A^{p_1}_{\a_1}$ to $A^{p_2}_{\a_2}$ if and only if it is bounded, due to a general result of Banach space theory:
it is known that, for $0< p_2<p_1<\infty$, every bounded operator
from $\ell^{p_1}$ to $\ell^{p_2}$ is compact
(see, for example, \cite[Theorem I.2.7, p.31]{lt}),
and the weighted Bergman space $A^p_{\a}$ is isomorphic to $\ell^{p}$
(see \cite[Theorem 11, p.89]{wo}, note that the same proof there works
for weighted Bergman spaces on the unit ball $\Bn$).

Next we consider the case $\l\ge 1$. If $T^{\beta}_{\mu}$ is compact, then $\|T^{\beta}_{\mu} f_ k\|_{p_ 2,\a _ 2}\rightarrow 0$ for any bounded sequence $\{f_ k\}$ in $A^{p_ 1}_{\a_ 1}$ converging to zero uniformly on compact subsets of $\Bn$. Let $\{a_ k\}\subset \Bn$ with $|a_ k| \rightarrow 1^{-}$ and consider the functions
$$f_ k(z)=\frac{(1-|a_ k|^2)^{(n+1+\beta)-(n+1+\a _ 1)/p_ 1}}{(1-\langle z,a_ k \rangle )^{n+1+\beta}}.$$
Due to the conditions on $\beta$ and Lemma \ref{Ict} we have $\sup_ k \|f_ k\|_{p_ 1,\a _ 1}<\infty$, and it is obvious that $f_ k$ converges to zero uniformly on compact subsets of $\Bn$. Hence, $\|T^{\beta}_{\mu} f_ k\|_{p_ 2,\a _ 2}\rightarrow 0$. Therefore, proceeding as in the proof of the case $\l \ge 1$ of the implication (i) imples (ii) in Theorem \ref{toeplitz1}, for any $r>0$, we get
\begin{displaymath}
\begin{split}
\frac{\mu \big (D(a_ k,r) \big ) }{(1-|a_ k|^2)^{(n+1+\g)\l}}&\lesssim (1-|a_ k|^2)^{(n+1+\beta)+(n+1+\a _ 1)/p_ 1-(n+1+\g)\l}\,\,T_{\mu}^{\beta} f_ k(a_ k)
\\
&=(1-|a_ k|^2)^{(n+1+\a _ 2)/p_ 2}\,\,T_{\mu}^{\beta} f_ k(a_ k)
\\
&\lesssim \|T_{\mu}^{\beta} f_ k\|_{p_ 2,\a_ 2}\rightarrow 0.
\end{split}
\end{displaymath}
Thus, by \cite[p. 71]{zz}, the measure $\mu$ is a vanishing $(\l,\g)$-Bergman Carleson measure.

Conversely, let $\mu$ be a vanishing $(\l,\g)$-Bergman Carleson measure with $\l \ge 1$. To prove that $T_{\mu}^{\beta}$ is compact, we must show that $\|T^{\beta}_{\mu} f_ k\|_{p_ 2,\a _ 2}\rightarrow 0$ for any bounded sequence $\{f_ k\}$ in $A^{p_ 1}_{\a_ 1}$ converging to zero uniformly on compact subsets of $\Bn$. If $p_ 2>1$ then, as in the proof of Theorem \ref{toeplitz1}, by duality and Theorem \ref{v-carleson} we have (the numbers $p'_ 2$ and $\a '_ 2$ are the ones defined by \eqref{Eq-DN})
\begin{displaymath}
\|T^{\beta}_{\mu} f_ k\|_{p_ 2,\a _ 2}\asymp \sup_ {\|h\|_{p'_ 2,\a '_ 2}\le 1} \big | \langle h, T^{\beta}_{\mu} f_ k \rangle _{\beta} \big |\le \sup _ {\|h\|_{p'_ 2,\a '_ 2}\le 1} \int_{\Bn} |f_ k(z)|\,|h(z)|\,d\mu(z)\rightarrow 0.
\end{displaymath}
If $0<p_ 2 \le 1$, from the estimates obtained in the proof of (ii) implies (i) in Theorem \ref{toeplitz1} (see \eqref{t-mu2}) it follows that, for any lattice $\{a_ j\}$, we have
\begin{equation}\label{Eq-TC-1}
\begin{split}
\|T_{\mu}^{\beta}f_ k\|_{p_2,\alpha_2}^{p_2}
&\lesssim \sum_{j=1}^{\infty}
\left (\frac{\mu(D_j)}{(1-|a_j|^2)^{(n+1+\gamma )\l }}\right )^{p_ 2}
\left(\int_{\tilde D_j}  \!\!  |f_ k(z)|^{p_1}\,dv_{\alpha_1}(z)\right)^{p_2/p_1}.
\end{split}
\end{equation}
Let  $\varepsilon >0$. Since  $\mu$ is a vanishing $(\l,\g)$-Bergman Carleson measure, due to \cite[p. 71]{zz}, there is $0<r_ 0<1$ such that
\begin{equation}\label{Eq-TC-2}
\sup _{|a_ j|>r_ 0} \frac{\mu(D_j)}{(1-|a_j|^2)^{(n+1+\gamma )\l }} <\varepsilon.
\end{equation}
Split the sum appearing in \eqref{Eq-TC-1} in two parts: one over the points $a_ j$ with $|a_ j|\le r_ 0$, and the other over the points with $|a_ j|>r_ 0$. Since $\{f_ k\}$ converges to zero uniformly on compact subsets of $\Bn$, it is clear that the sum over the points $a_ j$ with $|a_ j|\le r_ 0$ (a finite sum) goes to zero as $k$ goes to infinity. On the other hand, by \eqref{Eq-TC-2} and since $p_ 2\ge p_ 1$ (because $\l\ge 1$), we have
\begin{displaymath}
\begin{split}
\sum_{j: |a_ j|>r_ 0} &
\left (\frac{\mu(D_j)}{(1-|a_j|^2)^{(n+1+\gamma )\l }}\right )^{p_ 2}
\left(\int_{\tilde D_j}  \!\!  |f_ k(z)|^{p_1}\,dv_{\alpha_1}(z)\right)^{p_2/p_1}
\\
&<\varepsilon ^{p_ 2} \sum_{j: |a_ j|>r_ 0}
\left(\int_{\tilde D_j}  \!\!  |f_ k(z)|^{p_1}\,dv_{\alpha_1}(z)\right)^{p_2/p_1} \le \varepsilon ^{p _ 2}\|f_ k\|_{p_ 1,\a _ 1}^{p_ 2}\le C \varepsilon ^{p _ 2}.
\end{split}
\end{displaymath}
Thus $\|T^{\beta}_{\mu} f_ k\|_{p_ 2,\a _ 2}\rightarrow 0$, finishing the proof.
\end{proof}

\section{Applications}

As a direct consequence of Theorem~\ref{carleson} and Theorem~\ref{v-carleson}
we have the following result.

\begin{cor}\label{carleson2}
Let $\mu$ be a positive Borel measure on $\Bn$.
Let $p,q>0$, $s\ge0$ and $\a,\delta>-1$ be given constants such that
$q/p+s/(n+1+\delta)\ge 1$.
Let
$$
\lambda=\frac{q}{p}+\frac{s}{(n+1+\delta)}\quad
\textrm{and} \quad
\gamma=\frac{1}{\l}\left(\frac{\alpha q}p+\frac{\delta s}{n+1+\delta}\right).
$$
Then $\mu$ is a $(\lambda, \gamma)$-Bergman Carleson measure
if and only if for any $f\in A^p_{\a}$,
and for some (any) $t>0$,
\begin{equation}\label{eq-c2}
\sup_{a\in \Bn}\int_{\Bn}|f(z)|^q\frac{(1-|a|^2)^t}{|1-\langle z, a\rangle |^{s+t}}\,d\mu(z)
\lesssim \|f\|_{p,\a}^q;
\end{equation}
and $\mu$ is a vanishing $(\lambda, \gamma)$-Bergman Carleson measure
if and only if for some (any) $t>0$, and for any sequence $\{f_k\}$ in $A^p_{\a}$
with $\|f_k\|_{p,\a}\le 1$ and $f_k(z)\to0$
uniformly on any compact subset of $\Bn$,
\begin{equation}\label{eq-c2v}
\lim_{k\to\infty}\sup_{a\in\Bn}
\int_{\Bn}|f_k(z)|^q\frac{(1-|a|^2)^t}{|1-\langle z, a\rangle |^{s+t}}\,d\mu(z)=0.
\end{equation}
\end{cor}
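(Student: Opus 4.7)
The strategy is to derive Corollary~\ref{carleson2} as the $k=2$ case of Theorem~\ref{carleson} (and Theorem~\ref{v-carleson}), with the second factor chosen as an explicit kernel--type test function. Set $(p_1,q_1,\a_1)=(p,q,\a)$, $\a_2=\delta$, and pick any $p_2,q_2>0$ with $q_2/p_2=s/(n+1+\delta)$ (the case $s=0$ collapses to the $k=1$ statement, since $(1-|a|^2)^t|1-\langle z,a\rangle|^{-t}$ is then uniformly bounded and nondegenerate at $a=0$). A direct substitution shows that the $(\l,\g)$ produced by \eqref{Eqlg} coincides with the $(\l,\g)$ of the corollary. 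The key test function is
$$
f_{2,a}(z)=\frac{(1-|a|^2)^{t/q_2}}{(1-\langle z,a\rangle)^{(s+t)/q_2}},\qquad a\in\Bn,\ t>0,
$$
which satisfies $|f_{2,a}(z)|^{q_2}=(1-|a|^2)^t|1-\langle z,a\rangle|^{-s-t}$ and, by Lemma~\ref{Ict}, has $\|f_{2,a}\|_{p_2,\delta}\asymp 1$ uniformly in $a$ (because $(s+t)p_2/q_2=(n+1+\delta)(s+t)/s>n+1+\delta$, and the resulting exponent of $(1-|a|^2)$ collapses to $0$).

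\emph{Forward direction.} If $\mu$ is a $(\l,\g)$-Bergman Carleson measure, Theorem~\ref{carleson} applied with $f_1=f$ and $f_2=f_{2,a}$ gives
$$
\int_{\Bn}|f(z)|^q\,\frac{(1-|a|^2)^t}{|1-\langle z,a\rangle|^{s+t}}\,d\mu(z)\lesssim \|f\|_{p,\a}^q\cdot\|f_{2,a}\|_{p_2,\delta}^{q_2}\lesssim \|f\|_{p,\a}^q
$$
uniformly in $a$, which is \eqref{eq-c2}. The vanishing analogue follows in the same way from Theorem~\ref{v-carleson}(ii): since $\|f_{2,a}\|_{p_2,\delta}$ is bounded uniformly in $a$, normalizing $f_{2,a}$ shows that the supremum over $a$ on the left of \eqref{eq-c2v} is dominated by the internal supremum over admissible $f_2$ appearing in Theorem~\ref{v-carleson}(ii).

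\emph{Reverse direction.} Assuming \eqref{eq-c2}, substitute the test family
$$
f_{A,b}(z)=\frac{(1-|b|^2)^A}{(1-\langle z,b\rangle)^{A+(n+1+\a)/p}},
$$
which has $\|f_{A,b}\|_{p,\a}\lesssim 1$ for $A>0$ large, by Lemma~\ref{Ict}. Evaluating \eqref{eq-c2} at $a=b$ and using the algebraic identity
$$
(n+1+\a)\frac{q}{p}+s=(n+1+\g)\l,
$$
(an immediate computation from the definitions of $\l$ and $\g$) we obtain
$$
\sup_{b\in\Bn}\int_{\Bn}\frac{(1-|b|^2)^{t'}}{|1-\langle z,b\rangle|^{(n+1+\g)\l+t'}}\,d\mu(z)<\infty,\qquad t':=Aq+t>0.
$$
Since $\l\ge 1$ by hypothesis, the remark following Theorem~A identifies $\mu$ as a $(\l,\g)$-Bergman Carleson measure. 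For the vanishing converse, given \eqref{eq-c2v} and any sequence $|b_k|\to 1^-$, the functions $f_{A,b_k}$ lie in the unit ball of $A^p_\a$ and converge to $0$ uniformly on compact subsets of $\Bn$; specializing the internal supremum at $a=b_k$ and applying the same algebraic identity recovers precisely \eqref{v-carleson0} for $\mu$.

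No substantive obstacle is expected: the entire argument is bookkeeping with exponents, anchored by the identity $(n+1+\a)q/p+s=(n+1+\g)\l$ (which aligns the Poisson-type kernel exponent with the Theorem~A exponent) and by the fact that $\|f_{2,a}\|_{p_2,\delta}$ is independent of $a$ thanks to the choice $q_2/p_2=s/(n+1+\delta)$.
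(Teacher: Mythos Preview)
Your proposal is correct and follows essentially the same approach as the paper: reduce to the $k=2$ case of Theorem~\ref{carleson} and Theorem~\ref{v-carleson} by choosing $p_2,q_2$ with $q_2/p_2=s/(n+1+\delta)$, use the kernel test function $f_{2,a}$ (identical to the paper's $g_a$, since $(n+1+\delta)/p_2=s/q_2$) for the forward direction, and substitute the standard normalized kernel $f_{A,b}$ for the reverse direction, invoking the identity $(n+1+\alpha)q/p+s=(n+1+\gamma)\lambda$ together with Theorem~A and \eqref{v-carleson0}. The only cosmetic difference is notation (your $A$ is the paper's $t_1/q$), and your remark that $A>0$ need be ``large'' is unnecessary---any $A>0$ already gives $\|f_{A,b}\|_{p,\alpha}\asymp 1$.
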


\textbf{Remark.} Note that (\ref{eq-c2}) does not depend on $\delta$,
which means that, in this corollary, we can choose any real number
$\delta>-1$ satisfying $q/p+s/(n+1+\delta)\ge1$
for $\lambda$ and $\gamma$. Furthermore, if $s=0$ we can also take $t=0$ since then the result reduces to the definition of (vanishing) Bergman Carleson measures.

\begin{proof} We begin with the first part. The case $s=0$ follows directly from the definition of Bergman Carleson measures.
So we assume $s>0$.
Since $s/(n+1+\delta)>0$, we can choose two positive numbers $p_2$ and $q_2$
such that $s/(n+1+\delta)=q_2/p_2$. Then
$$
\lambda=\frac{q}{p}+\frac{s}{n+1+\delta}=\frac{q}{p}+\frac{q_2}{p_2}\ge 1,
$$
and
$$
\gamma=\frac1{\lambda}\left(\frac{\a q}{p}+\frac{\delta s}{n+1+\delta}\right)
=\frac1{\lambda}\left(\frac{\a q}{p}+\frac{\delta q_2}{p_2}\right).
$$
Let $\mu$ be an $(\lambda,\gamma)$-Bergman Carleson measure.
Then, from the above observation and Theorem~\ref{carleson},
we know that for any $f\in A^{p}_{\a}$ and $g\in A^{p_2}_{\delta}$,
we have
\begin{equation}\label{eq-c2-g}
\int_{\Bn}|f(z)|^q \,|g(z)|^{q_2}\,d\mu(z)\lesssim \|f\|_{p,\a}^{q}\cdot \|g\|_{p_2,\delta}^{q_2}.
\end{equation}
For any $t>0$, let
$$
g(z)=g_a(z)=\frac{(1-|a|^2)^{t/{q_2}}}{ (1-\langle z,a\rangle)^{(n+1+\delta)/{p_2}+t/{q_2}}}.
$$
Using Lemma \ref{Ict} it is easy to check that $g_a\in A^{p_2}_{\delta}$,
and $\sup_{a\in \Bn}\|g_a\|_{p_2,\delta}\lesssim  1$.
Put $g=g_a$ in equation (\ref{eq-c2-g}),
take supremum over all $a\in \Bn$, and we get (\ref{eq-c2}).

Conversely, suppose (\ref{eq-c2}) holds for some $t>0$.
Given an arbitrary $t_1>0$, let
$$
f_a(z)=\frac{(1-|a|^2)^{t_1/q}}{(1-\langle z, a\rangle )^{(n+1+\a)/p+t_1/q}}.
$$
As before, it is easy to check that
$f_a\in A^p_{\alpha}$ and $\|f_a\|_{p,\alpha}\lesssim 1$.
 It is clear that $$(n+1+\gamma)\lambda=s+(n+1+\a)q/p,$$
and therefore, due to (\ref{eq-c2}) we get
\begin{displaymath}
\int_{\Bn}\frac{(1-|a|^2)^{t+t_1}}{|1-\langle z, a \rangle |^{(n+1+\g)\l+t+t_1}}\,d\mu(z)
=\int_{\Bn}|f_a(z)|^q\frac{(1-|a|^2)^t}{|1-\langle z, a \rangle |^{s+t}}\,d\mu(z)
\lesssim 1.
\end{displaymath}
Therefore
$$
\sup_{a\in \Bn}\int_{\Bn}\frac{(1-|a|^2)^{t+t_1}}{|1-\langle z, a \rangle |^{(n+1+\g)\l+t+t_1}}\,d\mu(z)<\infty,
$$
Since $\l \ge 1$, by Theorem \ref{TA}, we see that $\mu$ is an
$(\lambda,\gamma)$-Bergman Carleson measure.\\

Next we deal with the part concerning vanishing Bergman Carleson measures. If $s=0$, then the result follows easily from the definition of vanishing Bergman Carleson measures, and so we assume that $s>0$. If $\mu$ is a vanishing $(\lambda,\gamma)$-Bergman Carleson measure then, proceeding as in the first part, but using Theorem \ref{v-carleson} instead of Theorem \ref{carleson}, we obtain \eqref{eq-c2v}. Conversely, suppose that \eqref{eq-c2v} holds for some $t>0$. Let $\{a_ k\}\subset \Bn$ with $|a_ k|\rightarrow 1$ and, for arbitrary $t_ 1 >0$, consider the functions
$$f_ k(z)=\frac{(1-|a_ k|^2)^{t_1/q}}{(1-\langle z, a _ k\rangle )^{(n+1+\a)/p+t_1/q}}.$$
Then $\sup _ k \|f_ k\|_{p,\a} \le C$ and $\{f_ k\}$ converges to zero uniformly on compact subsets of $\Bn$, and using \eqref{eq-c2v} we see that
\begin{displaymath}
\begin{split}
\int_{\Bn} \frac{(1-|a_ k|^2)^{t+t_ 1}}{|1-\langle z, a_ k\rangle |^{(n+1+\g)\l +t+t_ 1}} & \,d\mu(z)=\int_{\Bn} |f_ k(z)|^q \,\frac{(1-|a_ k|^2)^t}{|1-\langle z,a_ k\rangle |^{s+t}}\,d\mu(z)
\\
&\le \sup_{a\in \Bn} \int_{\Bn} |f_ k(z)|^q \,\frac{(1-|a|^2)^t}{|1-\langle z,a\rangle |^{s+t}}\,d\mu(z)\longrightarrow 0.
\end{split}
\end{displaymath}
Since $\l \ge 1$, it follows from \eqref{v-carleson0} that $\mu$ is a vanishing $(\lambda,\gamma)$-Bergman Carleson measure. The proof is complete.
\end{proof}

\subsection{Applications to extended Ces\`aro operators}
\label{Cesaro}

For $g\in H(\Bn)$, the radial derivative is defined by
$$
Rg(z)=\sum_{k=1}^nz_k\frac{\partial g}{\partial z_k}(z),
$$
and the extended Ces\`aro operator is defined by
$$
J_gf(z)=\int_0^1 f(tz)Rg(tz)\,\frac{dt}{t},\qquad f\in H(\Bn).
$$
In the case of one variable, the operator is the same as
$$
J_gf(z)=\int_0^z f(\xi)g'(\xi)\,d\xi,
$$
which is also called the Riemann-Stieltjes operator.
The operator $J_g$ was first used by Ch. Pommerenke to
characterize BMOA functions on the unit disk.
It was first systematically studied
by A. Aleman and A. G. Siskasis  in \cite{as1}. They proved that $J_g$
is bounded on the Hardy space $H^p$ on the unit disk if and only if $g\in BMOA$.
Thereafter there have been many works on these operators.
See,  \cite{ac}, \cite{as2}, \cite{h}, \cite{pp1}, \cite{pp2}, \cite{sz} and \cite{x} for a few examples.
Here we are considering boundedness and compactness
of these operators from a
weighted Bergman space into the general space $F(p,q,s)$ on the unit ball, which is
defined as the space of all holomorphic functions $f$ on $\Bn$ such that
$$
\|f\|_{F(p,q,s)}^p
=\sup_{a\in \Bn}\int_{\Bn}|Rf(z)|^p(1-|z|^2)^q(1-|\p_a(z)|^2)^s\,dv(z)<\infty,
$$
where  $0<p<\infty$, $-n-1<q<\infty$, $0\le s<\infty$,
and $q+s>-1$.
We also say $f\in F_0(p,q,s)$ if
$$
\lim_{|a|\to1}\int_{\Bn}|Rf(z)|^p(1-|z|^2)^q(1-|\p_a(z)|^2)^s\,dv(z)=0.
$$

The family of spaces $F(p,q,s)$ on the unit disk was introduced
in \cite{zha1}. It contains, as special cases, many classical function spaces,
such as the analytic Besov spaces, weighted Bergman spaces,
Dirichlet spaces, the Bloch space, BMOA and $Q_p$ spaces.
See \cite{zha1} for the details. For $F(p,q,s)$ on the unit ball,
we refer to \cite{zhc}.

Here, for our purpose, we point out that if $s>n$, and $\a>0$,
then for any $p>0$, the space $F(p,p\a-n-1,s)=B^\a$, the $\a$-Bloch space,
which means the space of all functions $f\in H(\Bn)$ such that
$$
\|f\|_{B^{\a}}=\sup_{z\in \Bn}|Rf(z)|(1-|z|^2)^{\a}<\infty.
$$
When $\a=1$, $B^{1}=B$, the classical Bloch space.

For the case of the unit disk, the above result can be found in \cite{zha1}.
For the case of the unit ball $\Bn$, the result may be also known,
but we were not able to find a reference, so we provide a brief proof here.
First, we show that $F(p,q,s)$ are all subspaces of some $\a$-Bloch space.

\begin{prop}\label{fpqs} Let $0<p<\infty$, $-n-1<q<\infty$, $0\le s<\infty$,
and $q+s>-1$. Then $F(p,q,s)\subseteq B^{(n+1+q)/p}$.
\end{prop}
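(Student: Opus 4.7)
The plan is to get the pointwise estimate $|Rf(z)|(1-|z|^2)^{(n+1+q)/p} \lesssim \|f\|_{F(p,q,s)}$ by exploiting the subharmonicity of $|Rf|^p$ on a fixed Bergman ball $D(z,r)$ and then making a judicious choice of the parameter $a$ in the supremum defining the $F(p,q,s)$-norm.

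First, I would fix $z \in \Bn$ and some small $r > 0$. Since $Rf$ is holomorphic on $\Bn$ and $p>0$, the function $|Rf|^p$ is subharmonic, so by the sub-mean value property with respect to the weighted volume (see Lemma 2.24 in Zhu's book, already cited in the paper) together with $v(D(z,r)) \asymp (1-|z|^2)^{n+1}$, one gets
$$
|Rf(z)|^p \lesssim \frac{1}{(1-|z|^2)^{n+1}} \int_{D(z,r)} |Rf(w)|^p\,dv(w).
$$

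Next I would insert the weights. On the Bergman ball $D(z,r)$ the standard estimates give $1-|w|^2 \asymp 1-|z|^2$; moreover, since $\varphi_z$ is an involution with $\varphi_z(D(z,r)) = D(0,r)$, one also has $1-|\varphi_z(w)|^2 \asymp 1$ uniformly for $w \in D(z,r)$. Consequently, for $s \ge 0$,
$$
\int_{D(z,r)} |Rf(w)|^p\,dv(w) \asymp (1-|z|^2)^{-q}\int_{D(z,r)} |Rf(w)|^p (1-|w|^2)^q (1-|\varphi_z(w)|^2)^s\,dv(w).
$$
Combining the two displays,
$$
|Rf(z)|^p \lesssim \frac{1}{(1-|z|^2)^{n+1+q}}\int_{\Bn} |Rf(w)|^p (1-|w|^2)^q (1-|\varphi_z(w)|^2)^s\,dv(w) \le \frac{\|f\|_{F(p,q,s)}^p}{(1-|z|^2)^{n+1+q}},
$$
where in the last step I take the supremum with $a = z$. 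Raising to the power $1/p$ gives the Bloch-type bound $|Rf(z)|(1-|z|^2)^{(n+1+q)/p} \lesssim \|f\|_{F(p,q,s)}$, which yields $f \in B^{(n+1+q)/p}$.

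There is no real obstacle here; the only thing to be careful about is that the bound $(1-|\varphi_z(w)|^2)^s \asymp 1$ on $D(z,r)$ requires $s \ge 0$ (for the lower bound we use compactness of $\overline{D(0,r)}$ in $\Bn$), which is part of the hypothesis on $F(p,q,s)$. The argument is local in $z$ and uses only subharmonicity plus Möbius invariance of the Bergman metric, so the constants depend on $p$, $s$, $r$ and the dimension but not on $f$ or $z$, as required.
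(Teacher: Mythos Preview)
Your proposal is correct and follows essentially the same approach as the paper: apply the subharmonic mean-value estimate to $|Rf|^p$ on a Bergman ball $D(z,r)$, use $1-|w|^2\asymp 1-|z|^2$ and $1-|\varphi_z(w)|^2\asymp 1$ on $D(z,r)$ to insert the weights, and take $a=z$ in the supremum defining $\|f\|_{F(p,q,s)}$. The only cosmetic difference is that the paper derives $1-|\varphi_a(z)|^2\asymp 1$ from the identity \eqref{eq-pa} rather than from the involution property, but the substance is identical.
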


\begin{proof}
Let $f\in F(p,q,s)$.
By subharmonicity we have that, for a fixed $r$, $0<r<1$,
$$
|Rf(a)|^p\lesssim \frac1{(1-|a|^2)^{n+1}}\int_{D(a,r)}|Rf(z)|^p\,dv(z).
$$
Let $\a=(n+1+q)/p$. Then $q=p\a-n-1$.
Hence
\begin{eqnarray*}
|Rf(a)|^p(1-|a|^2)^{p\a}
&\lesssim& \frac1{(1-|a|^2)^{n+1-p\a}}\int_{D(a,r)}|Rf(z)|^p\,dv(z)\\
&\lesssim& \int_{D(a,r)}|Rf(z)|^p(1-|z|^2)^{p\a-n-1}\,dv(z)\\
\end{eqnarray*}
Since $|1-\langle z,a \rangle | \asymp (1-|z|^2)\asymp (1-|a|^2)$ for $z\in D(a,r)$, we know from \eqref{eq-pa} that $1-|\p_a(z)|^2 \asymp  1$ for $z\in D(a,r)$, and so, for $s\ge 0$,
\begin{eqnarray*}
|Rf(a)|^p(1-|a|^2)^{p\a}
&\lesssim& \int_{D(a,r)}|Rf(z)|^p(1-|z|^2)^{p\a-n-1}(1-|\p_a(z)|^2)^s\,dv(z)\\
&\lesssim& \int_{\Bn}|Rf(z)|^p(1-|z|^2)^{p\a-n-1}(1-|\p_a(z)|^2)^s\,dv(z).
\end{eqnarray*}
This clearly implies $F(p,p\a-n-1,s)\subseteq B^{\a}$, or $F(p,q,s)\subseteq B^{(n+1+q)/p}$.
\end{proof}

\begin{prop}\label{bloch}
Let $0<p<\infty$, $-n-1<q<\infty$, $0\le s<\infty$,
and $q+s>-1$. If $s>n$ then $F(p,q,s)=B^{(n+1+q)/p}$.
\end{prop}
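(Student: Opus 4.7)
By Proposition~\ref{fpqs} we already have $F(p,q,s)\subseteq B^{(n+1+q)/p}$, so the plan is to establish the reverse inclusion under the extra hypothesis $s>n$. Let $\alpha=(n+1+q)/p$, so that $q=p\alpha-n-1$, and suppose $f\in B^{\alpha}$. The idea is simply to pull the Bloch-type pointwise bound $|Rf(z)|(1-|z|^2)^{\alpha}\le \|f\|_{B^{\alpha}}$ out of the defining integral of $F(p,q,s)$ and reduce the rest to a standard Forelli--Rudin integral estimate.

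Explicitly, I would write
\begin{equation*}
|Rf(z)|^p\,(1-|z|^2)^q \;=\; |Rf(z)|^p\,(1-|z|^2)^{p\alpha}\,(1-|z|^2)^{-(n+1)} \;\le\; \|f\|_{B^{\alpha}}^p\,(1-|z|^2)^{-(n+1)},
\end{equation*}
so that it suffices to bound, uniformly in $a\in\Bn$,
\begin{equation*}
I(a):=\int_{\Bn}\frac{(1-|\varphi_a(z)|^2)^s}{(1-|z|^2)^{n+1}}\,dv(z).
\end{equation*}
Using the identity \eqref{eq-pa}, this becomes
\begin{equation*}
I(a)=(1-|a|^2)^s\int_{\Bn}\frac{(1-|z|^2)^{s-n-1}}{|1-\langle z,a\rangle|^{2s}}\,dv(z).
\end{equation*}

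At this point the hypothesis $s>n$ is exactly what is needed: it guarantees $t:=s-n-1>-1$, so Lemma~\ref{Ict} applies with $c=s$ and gives that the inner integral is comparable to $(1-|a|^2)^{-s}$. Consequently $I(a)\lesssim 1$ uniformly in $a$, which yields
\begin{equation*}
\sup_{a\in\Bn}\int_{\Bn}|Rf(z)|^p(1-|z|^2)^q(1-|\varphi_a(z)|^2)^s\,dv(z)\lesssim \|f\|_{B^{\alpha}}^p,
\end{equation*}
so $f\in F(p,q,s)$ with $\|f\|_{F(p,q,s)}\lesssim \|f\|_{B^{\alpha}}$. Combined with Proposition~\ref{fpqs}, this proves $F(p,q,s)=B^{\alpha}$. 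There is no real obstacle here; the only subtle point is making sure the exponent $t=s-n-1$ in Lemma~\ref{Ict} satisfies $t>-1$, and that is precisely the role of the assumption $s>n$.
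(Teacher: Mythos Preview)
Your proof is correct and follows essentially the same approach as the paper: pull out the Bloch bound $|Rf(z)|(1-|z|^2)^{\alpha}\le \|f\|_{B^{\alpha}}$, rewrite the remaining integral using \eqref{eq-pa}, and apply Lemma~\ref{Ict} with $t=s-n-1>-1$ and $c=s>0$. The paper's argument is identical in substance; your version is just slightly more explicit about the parameters in Lemma~\ref{Ict}.
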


\begin{proof}
Let $\a=(n+1+q)/p$.
The inclusion $F(p,q,s)\subseteq B^{\a}$ has been proved in the previous proposition.
Now we are proving the opposite inclusion.
Let $f\in B^{\a}$, and assume that $s>n$. Then, by Lemma~\ref{Ict}
\begin{eqnarray*}
\|f\|_{F(p,q,s)}^p
&=&\sup_{a\in \Bn}\int_{\Bn}|Rf(z)|^p(1-|z|^2)^{p\a-n-1}(1-|\p_a(z)|^2)^s\,dv(z)\\
&\le&\|f\|_{B^{\a}}^p\sup_{a\in \Bn}\int_{\Bn}(1-|z|^2)^{-n-1}(1-|\p_a(z)|^2)^s\,dv(z)\\
&\lesssim& \|f\|_{B^{\a}}^p\sup_{a\in \Bn}(1-|a|^2)^s
\int_{\Bn}\frac{(1-|z|^2)^{s-n-1}}{|1-\langle z,a\rangle|^{2s}}\,dv(z)\\
&\lesssim&\|f\|_{B^{\a}}^p,
\end{eqnarray*}
and so $f\in F(p,q,s)$. The proof is complete.
\end{proof}

In a similar way we can prove that, under the same restrictions of the parameters,
$F_0(p,q,s)\subseteq B^{(n+1+q)/p}_0$; and $F_0(p,q,s)=B^{(n+1+q)/p}_0$ if $s>n$,
where, for $\a>0$, $B^{\a}_0$ is the closed subspace of $B^{\a}$
which consists of functions $f\in H(\Bn)$ such that
$$
\lim_{|z|\to1}|Rf(z)|(1-|z|^2)^{\a}=0,
$$
and is called the little $\a$-Bloch space. We will frequently use the following well-known result \cite[Exercise 7.7]{ZhuBn} for
the $\a$-Bloch space: for $\a>1$, an analytic function $f\in B^{\a}$
if and only if
$$
\sup_{z\in\Bn}|f(z)|(1-|z|^2)^{\a-1}<\infty,
$$
and the norm of $f$ in $B^{\a}$ is
\begin{equation}\label{a-bloch}
|f(0)|+\|f\|_{B^{\a}}\asymp\sup_{z\in \Bn}|f(z)|(1-|z|^2)^{\a-1},\qquad \a>1.
\end{equation}

\begin{thm}\label{jg}
Let $0<p,t,\alpha<\infty$, $-1<\beta<\infty$, $0\le s<\infty$,
with $p\beta+s>n$.
Let $g\in H(\Bn)$ and
suppose $\beta-(n+1+\alpha)/t>0$ and
$p/t+s/(n+1+\delta)\ge 1$ for some $\delta>-1$.
Then
\begin{enumerate}
\item [(a)]  $J_g$ is a bounded operator
from $A^t_{\alpha}$ into $F(p,p\beta-n-1,s)$ if and only if
$g\in B^{\beta-(n+1+\a)/t}$;
\item [(b)]
 $J_g$ is a compact operator
from $A^t_{\alpha}$ into $F(p,p\beta-n-1,s)$ if and only if
$g\in B^{\beta-(n+1+\a)/t}_0$.
\end{enumerate}
\end{thm}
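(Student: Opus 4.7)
The starting point is the identity $R(J_g f)(z)=f(z)\,Rg(z)$, which is immediate from the definition of $J_g$. Combined with the formula $(1-|\p_a(z)|^2)^s=(1-|a|^2)^s(1-|z|^2)^s/|1-\langle z,a\rangle|^{2s}$ coming from \eqref{eq-pa}, this lets us rewrite
$$\|J_g f\|_{F(p,p\beta-n-1,s)}^p=\sup_{a\in\Bn}\int_{\Bn}|f(z)|^p\,\frac{(1-|a|^2)^s}{|1-\langle z,a\rangle|^{2s}}\,d\mu_g(z),$$
where $d\mu_g(z):=|Rg(z)|^p(1-|z|^2)^{p\beta-n-1+s}\,dv(z)$. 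Hence $J_g:A^t_\alpha\to F(p,p\beta-n-1,s)$ is bounded (respectively compact) precisely when the supremum on the right is bounded by $C\|f\|_{t,\alpha}^p$ for every $f\in A^t_\alpha$ (respectively tends to zero along every sequence in the unit ball of $A^t_\alpha$ converging to $0$ uniformly on compact subsets of $\Bn$; the latter reformulation of compactness is standard for operators between Bergman-type spaces and is justified by the normal-families property of $A^t_\alpha$).

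Next I would apply Corollary \ref{carleson2} and its vanishing counterpart with its $q$ equal to our $p$, its $p$ equal to our $t$, its $\alpha$ equal to our $\alpha$, and with both of its parameters $s$ and $t$ taken to be our $s$. The hypothesis $p/t+s/(n+1+\delta)\ge 1$ is precisely the condition required, so with
$$\lambda=\frac{p}{t}+\frac{s}{n+1+\delta}\ge 1,\qquad \gamma=\frac{1}{\lambda}\left(\frac{\alpha p}{t}+\frac{\delta s}{n+1+\delta}\right),$$
part (a) becomes equivalent to $\mu_g$ being a $(\lambda,\gamma)$-Bergman Carleson measure, and part (b) to $\mu_g$ being a vanishing one. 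Since $\lambda\ge 1$, Theorem A (respectively \eqref{v-carleson0}) reduces these to size/decay conditions on $\mu_g(D(z,r))/(1-|z|^2)^{(n+1+\gamma)\lambda}$. A direct algebraic calculation gives
$$(n+1+\gamma)\lambda=(n+1+\alpha)\frac{p}{t}+s,$$
which, as expected, is independent of the auxiliary parameter $\delta$.

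It remains to translate the measure condition into a Bloch-type condition on $g$. One direction uses subharmonicity of $|Rg|^p$: the sub-mean value inequality together with $(1-|w|^2)\asymp(1-|z|^2)$ and $v(D(z,r))\asymp(1-|z|^2)^{n+1}$ on $D(z,r)$ yields
$$|Rg(z)|^p(1-|z|^2)^{p\beta+s}\lesssim \mu_g(D(z,r)),$$
so if $\mu_g$ is a $(\lambda,\gamma)$-Bergman Carleson measure then $|Rg(z)|(1-|z|^2)^{\beta-(n+1+\alpha)/t}$ is bounded, giving $g\in B^{\beta-(n+1+\alpha)/t}$. For the converse, the Bloch bound $|Rg(w)|^p\lesssim(1-|w|^2)^{(n+1+\alpha)p/t-p\beta}$ inserted into the defining integral of $\mu_g(D(z,r))$ produces the matching upper bound $(1-|z|^2)^{(n+1+\alpha)p/t+s}$, after using once more that $(1-|w|^2)\asymp(1-|z|^2)$ on $D(z,r)$. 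This proves (a), and (b) follows by running exactly the same two inequalities with $\varepsilon$'s: the condition $|Rg(z)|(1-|z|^2)^{\beta-(n+1+\alpha)/t}\to 0$ as $|z|\to 1$ corresponds precisely to $\mu_g(D(z,r))/(1-|z|^2)^{(n+1+\gamma)\lambda}\to 0$ as $|z|\to 1$.

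The main obstacle is the reduction step: one must correctly match the parameters of Corollary \ref{carleson2} and verify the clean algebraic identity $(n+1+\gamma)\lambda=(n+1+\alpha)p/t+s$ that decouples the result from $\delta$; once this bookkeeping is done, the subharmonicity/Bloch translation in the final paragraph is routine.
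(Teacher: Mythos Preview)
Your argument is correct and follows the paper's strategy through the reduction to a Carleson condition on $\mu_g$: the identity $R(J_gf)=f\,Rg$, the rewriting via $(1-|\p_a(z)|^2)^s$, the application of Corollary~\ref{carleson2}, and the algebraic identity $(n+1+\gamma)\lambda=(n+1+\alpha)p/t+s$ all match the paper exactly.

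The one genuine difference is in the final translation from the Carleson condition to the Bloch condition. The paper uses the integral form (iii) of Theorem~A, recognizes the resulting condition as membership of $g$ in $F\big(p,\,p(\beta-(n+1+\alpha)/t)-n-1,\,s+(n+1+\alpha)p/t\big)$, checks that $s+(n+1+\alpha)p/t>n$, and then invokes Proposition~\ref{bloch} to identify this space with $B^{\beta-(n+1+\alpha)/t}$. You instead use the ball form (ii) of Theorem~A together with a direct subharmonicity estimate $|Rg(z)|^p(1-|z|^2)^{p\beta+s}\lesssim\mu_g(D(z,r))$ and its converse. Your route is more elementary in that it bypasses the $F(p,q,s)$ machinery entirely; the paper's route is more structural in that it exhibits the condition as membership in another $F$-space before collapsing it to a Bloch space. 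Both are equally valid, and your vanishing argument for part~(b) via the ball characterization of vanishing Carleson measures (as in \cite[p.~71]{zz}) is likewise correct.
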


\begin{proof}
An easy computation shows that $R (J_gf)=fRg$.
By definition, $J_g$ is bounded
from $A^t_{\alpha}$ into $F(p,p\beta-n-1,s)$ if and only if
for any $f\in A^t_{\alpha}$
\begin{eqnarray*}
\|J_gf\|_{F(p,p\beta-n-1,s)}^p
&=&\sup_{a\in \Bn}\int_{\Bn}|f(z)|^p|Rg(z)|^p(1-|z|^2)^{p\beta-n-1}(1-|\p_a(z)|^2)^s\,dv(z)
\\
&=&\sup_{a\in \Bn}\int_{\Bn}|f(z)|^p|Rg(z)|^p(1-|z|^2)^{s+p\beta-n-1}\frac{(1-|a|^2)^s}{|1-\langle z,a\rangle|^{2s}}\,dv(z)
\\
&=&\sup_{a\in \Bn}\int_{\Bn}|f(z)|^p\frac{(1-|a|^2)^s}{|1-\langle z,a\rangle|^{2s}}\,d\mu_g(z)
\\
&\le& C\|f\|_{t,\alpha}^p,
\end{eqnarray*}
where $d\mu_g(z)=|Rg(z)|^p(1-|z|^2)^{s+p\beta-n-1}\,dv(z)$.
By Corollary~\ref{carleson2}, this is equivalent to that
$\mu_g$ is an $(\lambda,\gamma)$-Bergman Carleson measure,
where
$$
\lambda=\frac{p}{t}+\frac{s}{n+1+\delta},\quad \textrm{and}\quad
\gamma=\frac1{\lambda}\left(\frac{\a p}{t}+\frac{\delta s}{n+1+\delta}\right).
$$
Then, by the condition in the theorem, $\lambda\ge 1$ and,
it is easy to check that $\gamma>-1$.
Thus, by Theorem \ref{TA}, the boundedness of $J_ g$ is equivalent to
\begin{equation}\label{EqJg}
\sup_{a\in \Bn}\int_{\Bn}
\frac{(1-|a|^2)^{(n+1+\gamma)\lambda}}{|1-\langle z,a\rangle|^{2(n+1+\gamma)\lambda}}
|Rg(z)|^p(1-|z|^2)^{s+p\beta-n-1}\,dv(z)<\infty.
\end{equation}
An easy computation shows that
$$
(n+1+\gamma)\lambda
=s+(n+1+\alpha)p/t,
$$
and \eqref{EqJg} becomes
$$
\sup_{a\in \Bn}\int_{\Bn}
\frac{(1-|a|^2)^{s+(n+1+\alpha)p/t}}{|1-\langle z,a\rangle|^{2(s+(n+1+\alpha)p/t)}}
|Rg(z)|^p(1-|z|^2)^{s+p\beta-n-1}\,dv(z)<\infty,
$$
which is the same as
$$
\sup_{a\in \Bn}\int_{\Bn}
|Rg(z)|^p(1-|z|^2)^{p(\beta-(n+1+\a)/t)-n-1}(1-|\p_a(z)|^2)^{s+(n+1+\a)p/t}\,dv(z)
<\infty.
$$
Thus the operator $J_ g$ is bounded from $A^t_{\alpha}$ into $F(p,p\beta-n-1,s)$ if and only if
\begin{equation} \label{EqJg-2}
g\in F \Big (p, q, s+(n+1+\a)p/t \Big )
\end{equation}
with
$$ q=p(\beta-(n+1+\a)/t)-n-1.$$
Since $\lambda\ge 1$ and $\gamma>-1$, we know that
$$
s+(n+1+\a)p/t=(n+1+\gamma)\lambda\ge n+1+\gamma>n,
$$
and so, by Proposition~\ref{bloch}, condition \eqref{EqJg-2} is equivalent to
$g\in B^{\beta-(n+1+\a)/t}$ which proves part (a).

Using the second part of Corollary~\ref{carleson2},
the criterion for the compactness in part (b) is proved in the same way.
We omit the details.
The proof is complete.
\end{proof}


Our next result is for the integral operator
$$
I_gf(z)=\int_0^1 Rf(tz)\,g(tz)\,\frac{dt}{t}.
$$
This operator can be considered as a companion of the operator $J_g$.

\begin{thm}\label{ig}
Let $0<p,t,\beta<\infty$, $-1<\a<\infty$, $0\le s<\infty$ with
$p\beta+s>n$. Let $g\in H(\Bn)$ and
suppose $p/t+s/(n+1+\delta)\ge 1$ for some $\delta>-1$.
Then $I_g$ is a bounded operator
from $A^t_{\alpha}$ into $F(p,p\beta-n-1,s)$ if and only if
\begin{itemize}
\item[ (i)] $g\in B^{\beta-(n+1+\a)/t}$ \,for\, $\beta>1+(n+1+\a)/t$;
\item[(ii)] $g\in H^{\infty}$ \,for\, $\beta=1+(n+1+\a)/t$;
\item[(iii)] $g\equiv 0$ \,for\, $0<\beta<1+(n+1+\a)/t$.
\end{itemize}
\end{thm}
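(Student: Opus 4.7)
The approach parallels that of Theorem \ref{jg}. Since $R(I_g f)(z) = g(z) Rf(z)$, I would rewrite
\begin{equation*}
\|I_g f\|^p_{F(p, p\beta-n-1, s)} = \sup_{a \in \Bn} \int_{\Bn} |Rf(z)|^p \frac{(1-|a|^2)^s}{|1-\langle z,a\rangle|^{2s}} \, d\mu_g(z),
\end{equation*}
where $d\mu_g(z) = |g(z)|^p (1-|z|^2)^{p\beta+s-n-1}\, dv(z)$. Using the standard fact that $f\in A^t_\alpha$ if and only if $Rf\in A^t_{\alpha+t}$, with $\|f\|_{t,\alpha} \asymp |f(0)| + \|Rf\|_{t,\alpha+t}$, and that the natural test functions $(1-\langle\cdot,a\rangle)^{-N}$ lie in the range of $R$, the boundedness of $I_g$ becomes equivalent to the Carleson-type estimate
\begin{equation*}
\sup_{a \in \Bn} \int_{\Bn} |h(z)|^p \frac{(1-|a|^2)^s}{|1-\langle z,a\rangle|^{2s}} \, d\mu_g(z) \lesssim \|h\|^p_{t, \alpha+t}, \qquad h \in A^t_{\alpha+t}.
\end{equation*}

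I would then invoke Corollary \ref{carleson2} with the parameters $(q,p,\alpha)$ replaced by $(p,t,\alpha+t)$ (keeping $s,\delta$), which is legitimate because $p/t+s/(n+1+\delta)\ge 1$. This gives that the display above is equivalent to $\mu_g$ being a $(\lambda^*,\gamma^*)$-Bergman Carleson measure with $\lambda^*=p/t+s/(n+1+\delta)\ge 1$ and $\gamma^*$ given by the usual formula. Since $\lambda^*\ge 1$, Theorem A translates this into the pointwise condition $\mu_g(D(z,r)) \lesssim (1-|z|^2)^{(n+1+\gamma^*)\lambda^*}$. A direct computation yields $(n+1+\gamma^*)\lambda^* = s + p + (n+1+\alpha)p/t$, and subharmonicity of $|g|^p$ reduces the measure estimate to the pointwise bound
\begin{equation*}
|g(z)| \lesssim (1-|z|^2)^{1 + (n+1+\alpha)/t - \beta}.
\end{equation*}

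Setting $\sigma = \beta - (n+1+\alpha)/t$, the condition reads $|g(z)| \lesssim (1-|z|^2)^{1-\sigma}$, and the three cases of the theorem fall out: if $\sigma>1$, identity \eqref{a-bloch} gives $g\in B^\sigma$; if $\sigma=1$, the bound reads $g\in H^\infty$; if $0<\sigma<1$, then $g$ is holomorphic on $\Bn$ with $|g(z)|\to 0$ as $|z|\to 1$, and the maximum modulus principle forces $g\equiv 0$. For the sufficiency in cases (i) and (ii), I would plug the pointwise estimate for $|g|$ back into $\|I_g f\|^p_F$, reducing matters to a Carleson estimate for the elementary measure $(1-|z|^2)^{\tau}\,dv(z)$ for the appropriate $\tau$, which is handled by Corollary \ref{carleson2} together with Lemma \ref{Ict}; case (iii) is trivial since $I_0\equiv 0$.

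The main obstacle I anticipate is the clean passage between the $Rf$ formulation (with $f\in A^t_\alpha$) and the $h$ formulation (with $h\in A^t_{\alpha+t}$): both directions have to be justified, since on the necessity side one needs enough test functions $h$ of the form $Rf$ to extract the full Carleson condition on $\mu_g$, and on the sufficiency side one needs the estimate $\|Rf\|_{t,\alpha+t}\lesssim \|f\|_{t,\alpha}$. Apart from this bookkeeping, the algebraic simplification of $(n+1+\gamma^*)\lambda^*$ into the form $s+p+(n+1+\alpha)p/t$ is the other step worth carrying out carefully, as the whole trichotomy in $\beta$ hinges on it.
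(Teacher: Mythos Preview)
Your approach is correct, and for sufficiency it coincides with the paper's: both insert the pointwise bound on $|g|$ coming from (i) or (ii), reduce to the Carleson property of the elementary weight $(1-|z|^2)^{\tau}\,dv$, and conclude via Corollary~\ref{carleson2} and Lemma~\ref{Ict}.

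For necessity the paper takes a shorter path that sidesteps precisely the obstacle you flag. Rather than establishing the full Carleson equivalence for $\mu_g$ on $A^t_{\alpha+t}$, it uses the embedding $F(p,p\beta-n-1,s)\subset B^{\beta}$ of Proposition~\ref{fpqs}, so boundedness of $I_g$ into $F$ immediately gives boundedness into $B^{\beta}$. One then tests on $f_a(z)=(1-|a|^2)^{\eta}/(1-\langle z,a\rangle)^{\eta+(n+1+\alpha)/t}$ and evaluates the Bloch seminorm at $z=a$: since $R(I_gf_a)=g\,Rf_a$ and $|Rf_a(a)|\asymp(1-|a|^2)^{-(n+1+\alpha)/t-1}$, this yields directly
\[
(1-|a|^2)^{\beta-(n+1+\alpha)/t-1}\,|g(a)|\lesssim\|I_gf_a\|_{B^\beta}\lesssim\|I_g\|,
\]
from which (i)--(iii) follow without any passage through $A^t_{\alpha+t}$ or inversion of $R$. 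Your route also works, but note that your assertion that $(1-\langle\cdot,a\rangle)^{-N}$ lies in the range of $R$ is not literally true, since the range of $R$ consists of functions vanishing at the origin; you must subtract the constant term and then handle it separately, which is exactly the bookkeeping you anticipated. One minor point: in case (iii) you write $0<\sigma<1$, but $\sigma=\beta-(n+1+\alpha)/t$ can be nonpositive; the maximum-principle argument still goes through since $1-\sigma>0$ throughout case (iii).
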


\begin{proof}
Assume that (i)-(iii) hold.
First, we consider case (i), that is, when $\beta>1+(n+1+\a)/t$.
Let $g\in B^{\beta-(n+1+\a)/t}$.
An easy computation shows that $R(I_gf)=gRf$.
Hence, due to \eqref{a-bloch}, for any $f\in A^t_{\alpha}$ we have
\begin{equation}\label{igf}
\begin{split}
\|I_g&f\|_{F(p,p\beta-n-1,s)}^pCase
\\
&=\sup_{a\in\Bn}\int_{\Bn}|g(z)|^p\,|Rf(z)|^p\,(1-|z|^2)^{p\beta-n-1}(1-|\p_a(z)|^2)^s\,dv(z)
\\
&=\sup_{a\in\Bn}\int_{\Bn}|g(z)|^p\,|Rf(z)|^p \,(1-|z|^2)^{s+p\beta-n-1}
\frac{(1-|a|^2)^s}{|1-\langle z,a\rangle|^{2s}}\,dv(z)
\\
&\le \|g\|_{B^{\beta-(n+1+\a)/t}}^p\sup_{a\in \Bn}\int_{\Bn}|Rf(z)|^p
\frac{(1-|a|^2)^s}{|1-\langle z,a\rangle|^{2s}}\,d\mu(z).
\end{split}
\end{equation}
where $d\mu(z)=(1-|z|^2)^{(n+1+\a)p/t+p+s-n-1}\,dv(z)$.
By Lemma~\ref{Ict}, for any $\eta>0$ we have
\begin{eqnarray}\label{mu}
&~&\sup_{a\in\Bn}\int_{\Bn}\frac{(1-|a|^2)^{\eta}}
{|1-\langle z,a\rangle|^{\eta+(n+1+\a)p/t+p+s}}\,d\mu(z)
\\
&~&\qquad=\sup_{a\in\Bn}(1-|a|^2)^{\eta}\int_{\Bn}
\frac{(1-|z|^2)^{(n+1+\a)p/t+p+s-n-1}}
{|1-\langle z,a\rangle|^{\eta+(n+1+\a)p/t+p+s}}\,dv(z) <\infty. \notag
\end{eqnarray}
Notice that the application of Lemma \ref{Ict} here is correct, since if we let
$$
\lambda=\frac{p}{t}+\frac{s}{n+1+\delta},\quad \textrm{and}\quad
\gamma=\frac1{\lambda}\left(\frac{(t+\a)p}{t}+\frac{\delta s}{n+1+\delta}\right),
$$
then, by the condition in the theorem, $\lambda\ge 1$, and
it is easy to check that $\gamma>-1$. Also, an easy computation shows that
\begin{equation}\label{Eq-Z}
(n+1+\gamma)\lambda
=s+(n+1+\alpha+t)p/t=(n+1+\a)p/t+p+s.
\end{equation}
Thus we have
\begin{displaymath}
\begin{split}
(n+1+\a)p/t+p+s-n-1&= (n+1+\gamma)\lambda -n-1
\\
&\ge n+1+\g -n-1=\g>-1.
\end{split}
\end{displaymath}
Hence, due to \eqref{Eq-Z}, condition \eqref{mu} means that $\mu$ is a $(\lambda,\gamma)$-Bergman Carleson measure,
and so by (\ref{igf}), Corollary~\ref{carleson2} and \cite[Theorem 2.16]{ZhuBn} we have that
$$
\|I_gf\|_{F(p,p\beta-n-1,s)}^p
\lesssim \|g\|_{B^{\beta-(n+1+\a)/t}}^p\|Rf\|_{t,t+\a}^{p}
\asymp \|g\|_{B^{\beta-(n+1+\a)/t}}^p\|f\|_{t,\a}^{p},
$$
and so $I_g:\,A^t_{\alpha}\to F(p,p\beta-n-1,s)$ is bounded.

Case (ii) is proved in the exactly same way as the proof for case (i),
with $\|g\|_{B^{\beta-(n+1+\a)/t}}$ replaced by $\|g\|_{H^{\infty}}$.
Case (iii) is trivial.

Conversely, suppose that $I_g:\,A^t_{\alpha}\to F(p,p\beta-n-1,s)$ is bounded. Then, by Proposition \ref{fpqs}, the operator $I_g:\,A^t_{\alpha}\to B^{\beta}$ is also bounded.
For $\eta>0$ and  $a\in \Bn$, let
$$
f_a(z)=\frac{(1-|a|^2)^{\eta}}{(1-\langle z,a\rangle)^{\eta+(n+1+\a)/t}}.
$$
It is easy to check that $\sup_{a\in\Bn} \|f_a\|_{t,\a}\le C$.
An easy computation shows that
$$
Rf_a(z)=\frac1{\eta+(n+1+\a)/t}
\frac{(1-|a|^2)^{\eta}}{(1-\langle z,a\rangle)^{\eta+(n+1+\a)/t+1}}.
$$
Note that $R(I_gf_a)(z)=Rf_a(z)\,g(z)$, and therefore
\begin{equation}\label{Ig-Eq-1}
\begin{split}
(1-|a|^2)^{\beta-(n+1+\a)/t-1} |g(a)|&\asymp |Rf_ a(a)|\,|g(a)|\,(1-|a|^2)^{\beta}
\\
&\le \sup_{z\in\Bn}|Rf_a(z)|\,|g(z)|\,(1-|z|^2)^{\beta}
\\
&=\|I_gf_a\|_{B^{\beta}}\le C\,\|I_ g\|.
\end{split}
\end{equation}
This directly gives (ii) and, by the maximum principle, we also obtain (iii). Part (i) follows from \eqref{a-bloch}.
The proof is complete.
\end{proof}

Similarly, by a standard method, we can prove the following compactness result.
\begin{thm}\label{ig0}
Let $0<p,t,\beta<\infty$, $-1<\a<\infty$, $0\le s<\infty$ with
$p\beta+s>n$. Let $g\in H(\Bn)$ and
suppose $p/t+s/(n+1+\delta)\ge 1$ for some $\delta>-1$.
Then $I_g$ is a compact operator
from $A^t_{\alpha}$ into $F(p,p\beta-n-1,s)$ if and only if
\begin{itemize}
\item[(i)] $g\in B^{\beta-(n+1+\a)/t}_0$ \,for\, $\beta>1+(n+1+\a)/t$;
\item[(ii)] $g\equiv 0$ \,for\, $0<\beta\le 1+(n+1+\a)/t$.
\end{itemize}
\end{thm}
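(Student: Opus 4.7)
The plan is to parallel the proof of Theorem \ref{ig}, replacing Carleson-measure arguments by their vanishing counterparts. Throughout, I will use the fact that $R(I_g f)= g\,Rf$, the identification $\|f\|_{t,\alpha}\asymp |f(0)|+\|Rf\|_{t,t+\alpha}$, and the standard characterization that, for $\alpha'>1$, a function $g$ belongs to $B^{\alpha'}_0$ if and only if $(1-|z|^2)^{\alpha'-1}|g(z)|\to 0$ as $|z|\to 1$.

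\textbf{Sufficiency.} Case (ii) is trivial. For case (i), assume $g\in B^{\beta-(n+1+\alpha)/t}_0$; since $\beta-(n+1+\alpha)/t>1$ we may pick, for given $\varepsilon>0$, some $r_0\in(0,1)$ with $(1-|z|^2)^{\beta-(n+1+\alpha)/t-1}|g(z)|<\varepsilon$ whenever $|z|>r_0$. Let $\{f_k\}$ be bounded in $A^t_\alpha$ with $f_k\to 0$ uniformly on compact subsets of $\Bn$; then $Rf_k\to 0$ uniformly on compacts too. Split
$$\|I_g f_k\|_{F(p,p\beta-n-1,s)}^p=\sup_a\bigl(I_1(a,k)+I_2(a,k)\bigr),$$
where $I_1$ is the piece with $|z|\le r_0$ and $I_2$ the piece with $|z|>r_0$. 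On $\{|z|\le r_0\}$ one has $|1-\langle z,a\rangle|\ge 1-r_0$, so, using that $|g|$ is bounded on this compact set, $I_1(a,k)\lesssim_{r_0}\sup_{|z|\le r_0}|Rf_k(z)|^p\to 0$ uniformly in $a$. For $I_2$, the Bloch-zero bound on $g$ yields
$$I_2(a,k)\le\varepsilon^p\int_{\Bn}|Rf_k(z)|^p(1-|z|^2)^{p+p(n+1+\alpha)/t+s-n-1}\frac{(1-|a|^2)^s}{|1-\langle z,a\rangle|^{2s}}\,dv(z),$$
and the Carleson-measure calculation (\ref{mu}) already used in the proof of Theorem \ref{ig}, together with Corollary \ref{carleson2}, bounds the right-hand side by $C\varepsilon^p\|Rf_k\|_{t,t+\alpha}^p\asymp C\varepsilon^p\|f_k\|_{t,\alpha}^p\lesssim C\varepsilon^p$. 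Letting $k\to\infty$ and then $\varepsilon\to 0$ gives $\|I_g f_k\|_{F(p,p\beta-n-1,s)}\to 0$, hence compactness.

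\textbf{Necessity.} Let $\eta>0$ and set, as in the proof of Theorem \ref{ig},
$$f_a(z)=\frac{(1-|a|^2)^\eta}{(1-\langle z,a\rangle)^{\eta+(n+1+\alpha)/t}}.$$
These functions satisfy $\sup_a\|f_a\|_{t,\alpha}\le C$, and $f_a\to 0$ uniformly on compact subsets of $\Bn$ as $|a|\to 1$. If $I_g$ is compact then $\|I_g f_a\|_{F(p,p\beta-n-1,s)}\to 0$, and by Proposition \ref{fpqs} we also have $\|I_g f_a\|_{B^\beta}\to 0$. The pointwise estimate \eqref{Ig-Eq-1} obtained in the proof of Theorem \ref{ig} gives
$$(1-|a|^2)^{\beta-(n+1+\alpha)/t-1}|g(a)|\lesssim \|I_g f_a\|_{B^\beta}\longrightarrow 0\quad\text{as }|a|\to 1.$$
In case (i) this is exactly the membership $g\in B^{\beta-(n+1+\alpha)/t}_0$. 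In case (ii) with $\beta=1+(n+1+\alpha)/t$, the exponent is zero, so $|g(a)|\to 0$ as $|a|\to 1$; since compactness implies boundedness, Theorem \ref{ig}(ii) gives $g\in H^\infty$, and the maximum modulus principle applied on $\{|z|\le r\}$ with $r\to 1$ forces $g\equiv 0$. In case (ii) with $0<\beta<1+(n+1+\alpha)/t$, boundedness alone already yields $g\equiv 0$ by Theorem \ref{ig}(iii).

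\textbf{Main obstacle.} The delicate point is the sufficiency in case (i): we must extract a genuine $\varepsilon$ from the Bloch-zero hypothesis while still being able to close the estimate by the Carleson-measure bound. The splitting at $|z|=r_0$ handles this, but it relies crucially on the equivalence $g\in B^{\alpha'}_0\Leftrightarrow (1-|z|^2)^{\alpha'-1}|g(z)|\to 0$ (which needs $\alpha'=\beta-(n+1+\alpha)/t>1$, exactly the range of case (i)), and on the fact that the ``outer'' Carleson constant coming from (\ref{mu}) is independent of $f_k$, so that only the $\varepsilon^p$ factor survives.
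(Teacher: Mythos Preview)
Your proof is correct and follows essentially the same approach as the paper: the same test functions $f_a$ combined with Proposition~\ref{fpqs} and the pointwise estimate \eqref{Ig-Eq-1} for necessity, and the same splitting of the integral at $|z|=r_0$ (with the Carleson-measure bound \eqref{mu} on the outer piece) for sufficiency. The only cosmetic differences are that you bound $I_1$ via direct compactness of $\{|z|\le r_0\}$ rather than via \eqref{a-bloch}, and you invoke Theorem~\ref{ig}(iii) for the subcase $\beta<1+(n+1+\alpha)/t$ whereas the paper handles all of case~(ii) uniformly by the maximum principle; both variants work.
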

\begin{proof}
If $I_g$ is a compact operator
from $A^t_{\alpha}$ into $F(p,p\beta-n-1,s)$, then $I_ g:A^t_{\a}\rightarrow B^{\beta}$ is also compact due to Proposition \ref{fpqs}. Let $\{a_ k\}\subset \Bn$ with $|a_ k|\rightarrow 1$ and, for $\eta>0$, consider the sequence of holomorphic functions $\{f_ k\}$ given by
$$f_ k(z)= \frac{(1-|a_ k|^2)^{\eta}}{(1-\langle z,a_ k\rangle)^{\eta+(n+1+\a)/t}}.$$
As before, $\sup_ k \|f_ k\|_{t,\a}\le C$, and $\{f_ k\}$ converges to zero uniformly on compact subsets of $\Bn$. Since $I_ g$ is compact, from \eqref{Ig-Eq-1} we get
$$(1-|a_ k|^2)^{\beta-(n+1+\a)/t-1} |g(a_ k)|\lesssim \|I_gf_k\|_{B^{\beta}}\rightarrow 0.$$
This gives (ii) by the maximum principle; and also (i) since, as in \eqref{a-bloch}, for $\sigma>1$, a function $f\in H(\Bn)$ is in $B^{\sigma}_ 0$ if and only if $\lim_{|z|\rightarrow 1^{-}}(1-|z|^2)^{\sigma-1}|f(z)|=0$.

Conversely, assume that (i) holds, that is, $\beta >1+(n+1+\a)/t$ and $g\in B_ 0^{\beta-(n+1+\a)/t}$. Then, given $\varepsilon>0$, there is $0<r_ 0<1$ such that
\begin{equation}\label{Ig-C-1}
\sup_{r_ 0<|z|<1} (1-|z|^2)^{\beta-(n+1+\a)/t-1}\,|g(z)| <\varepsilon.
\end{equation}
Let $\{f_ k\}$ be a bounded sequence in $A^t_{\a}$ converging to zero uniformly on compact subsets of $\Bn$. From \eqref{igf} we get
\begin{displaymath}
\|I_ g f_ k\|^p _{F(p,p\beta-n-1,s)} =I_ 1(k)+I_ 2(k),
\end{displaymath}
with
$$I_ 1(k):=\sup_{a\in\Bn}\int_{|z|\le r_ 0}\!\!|g(z)|^p\,|Rf_ k(z)|^p \,(1-|z|^2)^{s+p\beta-n-1}
\frac{(1-|a|^2)^s}{|1-\langle z,a\rangle|^{2s}}\,dv(z),$$
and
$$I_ 2(k):=\sup_{a\in\Bn}\int_{r_ 0<|z|<1}\!\!|g(z)|^p\,|Rf_ k(z)|^p \,(1-|z|^2)^{s+p\beta-n-1}
\frac{(1-|a|^2)^s}{|1-\langle z,a\rangle|^{2s}}\,dv(z).$$
Since $\{Rf_ k\}$ also converges to zero uniformly on compact subsets of $\Bn$, there is a positive integer $k_ 0$ such that $\sup_{|z|\le r_ 0}|Rf_ k(z)|<\varepsilon$ for $k\ge k_ 0$. Then, using \eqref{a-bloch}, it is easy to see that
\begin{displaymath}
I_ 1 (k)\le C\varepsilon ^p \|g\|_{B^{\beta-(n+1+\a)/t}}^p .
\end{displaymath}
On the other hand, by \eqref{Ig-C-1}, and arguing as in the proof of Theorem \ref{ig} we obtain
\begin{displaymath}
I_ 2 (k)<C \varepsilon ^p \,\|f_ k\|_{t,\a}^{p}\le C \varepsilon ^p.
\end{displaymath}
This shows that $\|I_ g f_ k\| _{F(p,p\beta-n-1,s)}\rightarrow 0$ proving that $I_ g$ is compact. Since the case (ii) is trivial, the proof is complete.
\end{proof}

\subsection{Pointwise multipliers}

For an holomorphic function $g$ in $\Bn$, the pointwise multiplication
operator $M_g$ is defined as follows $M_gf=gf$ for $f\in H(\Bn)$.

\begin{lemma}\label{mg-bloch}
Let $-1<\a<\infty$, $0<t,\beta<\infty$, and suppose that
$M_g:\,A^t_{\a}\to B^{\beta}$ is bounded. Then
\begin{itemize}
\item[(i)] $g\in B^{\beta-(n+1+\a)/t}$ \,if\, $\beta>1+(n+1+\a)/t$;
\item[(ii)] $g\in H^{\infty}$ \,if\, $\beta=1+(n+1+\a)/t$;
\item[(iii)] $g\equiv 0$ \,if\, $0<\beta<1+(n+1+\a)/t$.
\end{itemize}
\end{lemma}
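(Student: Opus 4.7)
My plan is to test the boundedness of $M_g:A^t_\alpha\to B^\beta$ on the standard family of ``peaking'' functions
$$f_a(z)=\frac{(1-|a|^2)^\eta}{(1-\langle z,a\rangle)^{\eta+(n+1+\alpha)/t}},\qquad a\in\Bn,$$
for any fixed $\eta>0$. By Lemma~\ref{Ict} one has $\sup_{a\in\Bn}\|f_a\|_{t,\alpha}\lesssim 1$, so the assumed boundedness of $M_g$ gives $\sup_{a\in\Bn}\|gf_a\|_{B^\beta}\lesssim \|M_g\|$. The key numerics are $f_a(a)=(1-|a|^2)^{-(n+1+\alpha)/t}$ and $|gf_a(0)|=|g(0)|(1-|a|^2)^\eta\lesssim 1$ uniformly in $a$. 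All three parts of the lemma follow by applying an appropriate pointwise estimate for $B^\beta$-functions to $h=gf_a$ and specializing to $z=a$.

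\textbf{Parts (i) and (ii).} Here $\beta\ge 1+(n+1+\alpha)/t>1$, so \eqref{a-bloch} applied to $h=gf_a$ yields
$$|g(a)|(1-|a|^2)^{\beta-1-(n+1+\alpha)/t}=|(gf_a)(a)|(1-|a|^2)^{\beta-1}\lesssim |gf_a(0)|+\|gf_a\|_{B^\beta}\lesssim 1.$$
When $\beta>1+(n+1+\alpha)/t$, the exponent on the left-hand side equals $\sigma-1$ with $\sigma:=\beta-(n+1+\alpha)/t>1$, and \eqref{a-bloch} (read in reverse) recognizes the resulting supremum bound as membership $g\in B^\sigma$, giving (i). When $\beta=1+(n+1+\alpha)/t$ the exponent vanishes and we obtain $g\in H^\infty$, which is (ii).

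\textbf{Part (iii).} The goal is to show $|g(a)|\to 0$ as $|a|\to 1^-$; the maximum modulus principle then forces $g\equiv 0$, since such decay implies that $g$ extends continuously to $\overline{\Bn}$ with zero boundary values. The standard pointwise growth for $B^\beta$-functions splits into three regimes: for $\beta<1$, $B^\beta\subset H^\infty$; for $\beta=1$, $|h(z)|\lesssim \log\frac{1}{1-|z|^2}$; and for $\beta>1$, \eqref{a-bloch} gives $|h(z)|\lesssim (1-|z|^2)^{1-\beta}$. Applying the relevant estimate to $h=gf_a$, setting $z=a$, and dividing by $f_a(a)$, one obtains in every subcase
$$|g(a)|\lesssim (1-|a|^2)^{\delta}\bigl(1+\log(1-|a|^2)^{-1}\bigr)^{\kappa}$$
with $\delta>0$ and $\kappa\in\{0,1\}$, which tends to $0$ as $|a|\to 1^-$, as required.

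The only real subtlety lies in case (iii), where the pointwise growth of $B^\beta$-functions transitions through a logarithmic regime at $\beta=1$ and thus forces a case split; but all three subcases close with the same maximum-modulus argument, and the uniform $A^t_\alpha$-boundedness of $\{f_a\}$ delivered by Lemma~\ref{Ict} is the only nontrivial input.
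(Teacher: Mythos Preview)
Your proof is correct and follows essentially the same approach as the paper: both test on the functions $f_a(z)=(1-|a|^2)^\eta/(1-\langle z,a\rangle)^{\eta+(n+1+\alpha)/t}$ and use the pointwise characterization \eqref{a-bloch} of $B^\beta$ for $\beta>1$.

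The one difference lies in part (iii). You handle the three regimes $\beta<1$, $\beta=1$, $\beta>1$ separately, invoking the corresponding pointwise growth estimates for Bloch-type functions (bounded, logarithmic, and polynomial, respectively). The paper instead observes that since $B^{\beta_1}\subseteq B^{\beta_2}$ for $\beta_1<\beta_2$, boundedness of $M_g:A^t_\alpha\to B^\beta$ implies boundedness of $M_g:A^t_\alpha\to B^{\beta'}$ for any $\beta'>\beta$; so in case (iii) one may without loss of generality assume $1<\beta<1+(n+1+\alpha)/t$, after which the single estimate \eqref{a-bloch} already used for (i) and (ii) applies directly. The paper's reduction is a little cleaner in that it stays entirely within the one estimate \eqref{a-bloch} recorded in the text, whereas your case split appeals to the $\beta\le 1$ growth bounds (standard, but not stated in the paper). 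Both arguments close with the same maximum-modulus conclusion.
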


\begin{proof}
By definition it is easy to see that $B^{\beta_1}\subseteq B^{\beta_2}$
for $\beta_1<\beta_2$. Hence, in case (iii) we may assume that
$1<\beta<1+(n+1+\a)/t$.
For $\eta>0$ and $a\in \Bn$, let
$$
f_a(z)=\frac{(1-|a|^2)^{\eta}}{(1-\langle z,a\rangle)^{\eta+(n+1+\a)/t}}.
$$
We have seen before that $\{f_a\}$ is uniformly bounded in $A^t_{\a}$.
Since $M_g:\,A^t_{\a}\to B^{\beta}$ is bounded,
we know that
$$
\sup_{a\in\Bn}(|g(0)f_a(0)|+\|gf_a\|_{B^{\beta}})
\lesssim \sup_{a\in\Bn}\|f_a\|_{t,\a}<\infty.
$$
However, since $\beta>1$, by (\ref{a-bloch}) we get
\begin{eqnarray*}
|g(0)f_a(0)|+\|gf_a\|_{B^{\beta}}
&=&|g(0)|(1-|a|^2)^{\eta}+\sup_{z\in\Bn}|g(z)||f_a(z)|(1-|z|^2)^{\beta-1}\\
&\ge& |g(a)||f_a(a)|(1-|a|^2)^{\beta-1}\\
&=&|g(a)|(1-|a|^2)^{\beta-1-(n+1+\a)/t}.
\end{eqnarray*}
Hence we get
$$
\sup_{a\in\Bn}|g(a)|(1-|a|^2)^{\beta-1-(n+1+\a)/t}<\infty,
$$
which gives (i) and (ii), and also gives (iii) by the maximum principle.
\end{proof}

Now we are ready to prove the following characterizations for
bounded pointwise multiplication operators from $A^t_{\a}$ to $F(p,q,s)$ spaces.

\begin{thm}\label{mg}
Let $0<p,t,\beta<\infty$, $-1<\a<\infty$, $0\le s<\infty$ with
$p\beta+s>n$. Let $g\in H(\Bn)$ and
suppose $p/t+s/(n+1+\delta)\ge 1$ for some $\delta>-1$.
Then $M_g:\, A^t_{\a}\to F(p,p\beta-n-1,s)$ is bounded if and only if
\begin{itemize}
\item[(i)] $g\in B^{\beta-(n+1+\a)/t}$ \,for\, $\beta>1+(n+1+\a)/t$;
\item[(ii)] $g\in H^{\infty}=H^{\infty}\cap B$ \,for\, $\beta=1+(n+1+\a)/t$;
\item[(iii)] $g\equiv 0$ \,for\, $0<\beta<1+(n+1+\a)/t$.
\end{itemize}
\end{thm}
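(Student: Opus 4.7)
The plan is to reduce the problem to the characterizations of boundedness for $J_g$ and $I_g$ already established in Theorems \ref{jg} and \ref{ig}. The product rule for the radial derivative gives $R(gf)=g\,Rf+f\,Rg$, from which one readily checks that
$$
M_gf(z)=g(0)f(0)+I_gf(z)+J_gf(z),\qquad f\in H(\Bn).
$$
After absorbing the constant term, $M_g\colon A^t_{\a}\to F(p,p\beta-n-1,s)$ will be bounded if and only if both companion operators $I_g$ and $J_g$ map $A^t_{\a}$ boundedly into $F(p,p\beta-n-1,s)$. This is the key structural observation that converts the problem into something already resolved.

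For necessity, I would invoke Proposition \ref{fpqs} to obtain the continuous inclusion $F(p,p\beta-n-1,s)\subseteq B^{\beta}$. Thus boundedness of $M_g$ into $F(p,p\beta-n-1,s)$ automatically forces $M_g\colon A^t_{\a}\to B^{\beta}$ to be bounded, and Lemma \ref{mg-bloch} then delivers the three alternatives (i)--(iii) according to the range of $\beta$.

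For sufficiency, case (iii) is trivial, so only (i) and (ii) require attention. In case (i), since $\beta-(n+1+\a)/t>1>0$, the hypothesis $g\in B^{\beta-(n+1+\a)/t}$ matches both the condition in Theorem \ref{jg}(a) and the condition in Theorem \ref{ig}(i); hence both $J_g$ and $I_g$ are bounded, and so is $M_g$. In case (ii), the hypothesis $g\in H^{\infty}$ directly yields $I_g$ bounded by Theorem \ref{ig}(ii). Moreover $\beta-(n+1+\a)/t=1$ and $H^{\infty}\subset B=B^{1}$, so $g$ belongs to $B^{\beta-(n+1+\a)/t}$ and Theorem \ref{jg}(a) yields boundedness of $J_g$ as well.

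There is no genuine obstacle here: the analytic heavy lifting is already contained in Theorems \ref{ig} and \ref{jg} and in Lemma \ref{mg-bloch}. What remains is essentially parameter bookkeeping, namely verifying that the standing assumptions of Theorem \ref{mg} (the conditions $p\beta+s>n$ and the existence of some $\delta>-1$ with $p/t+s/(n+1+\delta)\ge 1$) carry over verbatim to the hypotheses required by Theorems \ref{ig} and \ref{jg}. The only subtlety worth flagging is the statement ``$H^{\infty}=H^{\infty}\cap B$'' in case (ii), which simply records the inclusion $H^{\infty}\subset B$ that is needed to match the Bloch-type requirement coming from $J_g$.
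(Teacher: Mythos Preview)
Your proposal is correct and follows essentially the same route as the paper: sufficiency via the decomposition $R(M_gf)=R(I_gf)+R(J_gf)$ together with Theorems \ref{jg} and \ref{ig}, and necessity via the embedding $F(p,p\beta-n-1,s)\subseteq B^{\beta}$ from Proposition \ref{fpqs} followed by Lemma \ref{mg-bloch}. One small caution: your assertion that $M_g$ is bounded \emph{if and only if} both $I_g$ and $J_g$ are bounded is not something you can read off directly from the decomposition (boundedness of a sum does not a priori force boundedness of each summand); fortunately your actual necessity argument does not use that direction, so the proof stands.
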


\begin{proof}
Suppose (i)-(iii) hold.
Then, by Theorem~\ref{jg} and Theorem~\ref{ig} we know that
both $J_g$ and $I_g$ are bounded from $A^t_{\a}$ to $F(p,p\beta-n-1,s)$.
Since
\begin{equation*}\label{rmgf}
R(M_gf)=fRg+gRf=R(I_gf)+R(J_gf),
\end{equation*}
we easily see that $M_g$ is also  bounded from $A^t_{\a}$ to $F(p,p\beta-n-1,s)$.

Conversely, suppose that $M_g:\, A^t_{\a}\to F(p,p\beta-n-1,s)$ is bounded. By Proposition \ref{fpqs}, we know that
$M_g: A^t_{\a}\to B^{\beta}$ is also bounded, and so by Lemma~\ref{mg-bloch},
we get (i)-(iii) directly.
The proof is complete.
\end{proof}

The result on
the compactness of the multiplication operator is stated next.

\begin{thm}\label{mg0}
Let $0<p,t,\beta<\infty$, $-1<\a<\infty$, $0\le s<\infty$ with
$p\beta+s>n$. Let $g\in H(\Bn)$ and
suppose $p/t+s/(n+1+\delta)\ge 1$ for some $\delta>-1$.
Then $M_g:\, A^t_{\a}\to F(p,p\beta-n-1,s)$ is compact if and only if
\begin{itemize}
\item[(i)] $g\in B^{\beta-(n+1+\a)/t}_0$ \,for\, $\beta>1+(n+1+\a)/t$;
\item[(iii)] $g\equiv 0$ \,for\, $0<\beta\le 1+(n+1+\a)/t$.
\end{itemize}
\end{thm}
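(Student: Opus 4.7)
The approach parallels the proof of Theorem \ref{mg} (the boundedness case), exploiting the identity
$$R(M_g f) = f\,Rg + g\,Rf = R(I_g f) + R(J_g f),$$
which expresses $M_g$ as $I_g + J_g$ up to the rank-one bounded map $f\mapsto g(0)f(0)$ (which contributes only to the value at the origin, harmless for compactness). The compactness characterizations of $J_g$ and $I_g$ supplied by Theorem \ref{jg}(b) and Theorem \ref{ig0} will then carry over to $M_g$.

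For sufficiency, suppose (i) holds: $\beta>1+(n+1+\alpha)/t$ and $g\in B^{\beta-(n+1+\alpha)/t}_0$. By Theorem \ref{jg}(b), $J_g:A^t_{\alpha}\to F(p,p\beta-n-1,s)$ is compact; since $B^{\beta-(n+1+\alpha)/t}_0\subset B^{\beta-(n+1+\alpha)/t}$, Theorem \ref{ig0}(i) makes $I_g$ compact as well. The identity above then makes $M_g$ a sum of compact and finite-rank operators, hence compact. Case (ii) is trivial.

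For necessity, first use Proposition \ref{fpqs} to deduce that $M_g:A^t_{\alpha}\to B^{\beta}$ is compact. Test with the family
$$f_a(z)=\frac{(1-|a|^2)^{\eta}}{(1-\langle z,a\rangle)^{\eta+(n+1+\alpha)/t}},\qquad a\in\Bn,$$
with $\eta>0$ large. These satisfy $\sup_{a}\|f_a\|_{t,\alpha}<\infty$ and $f_a\to 0$ uniformly on compact subsets of $\Bn$ as $|a|\to 1$, so compactness of $M_g$ forces $\|g\,f_a\|_{B^{\beta}}\to 0$. Once $\beta>1$, the estimate \eqref{a-bloch} yields
$$(1-|a|^2)^{\beta-1-(n+1+\alpha)/t}\,|g(a)|=|g(a)f_a(a)|(1-|a|^2)^{\beta-1}\lesssim |g(0)f_a(0)|+\|g\,f_a\|_{B^{\beta}}\longrightarrow 0.$$
For $\beta>1+(n+1+\alpha)/t$ this is exactly the $B^{\beta-(n+1+\alpha)/t}_0$ condition, via the little-Bloch analogue of \eqref{a-bloch} valid for $\sigma>1$. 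For $\beta=1+(n+1+\alpha)/t$ the estimate reduces to $|g(a)|\to 0$ as $|a|\to 1$; since compactness implies boundedness and Theorem \ref{mg}(ii) gives $g\in H^{\infty}$, the maximum modulus principle forces $g\equiv 0$. For $0<\beta<1+(n+1+\alpha)/t$, compactness implies boundedness and Theorem \ref{mg}(iii) gives $g\equiv 0$ directly.

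The main obstacle is essentially bookkeeping rather than anything deep: we must verify that the decomposition $M_g=I_g+J_g+\text{(rank one)}$ respects compactness into $F(p,p\beta-n-1,s)$, which works because this space's seminorm depends only on $R$ and $R$ annihilates constants. Everything else is a routine application of the already-established Theorems \ref{jg}(b), \ref{ig0}, and \ref{mg}, combined with the standard test-function argument and the well-known characterization of $B^{\sigma}_0$ (for $\sigma>1$) by vanishing of $(1-|z|^2)^{\sigma-1}|g(z)|$ at the boundary.
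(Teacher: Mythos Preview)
Your proposal is correct and follows precisely the route the paper indicates: the paper omits the proof entirely, stating only that it ``follows, using standard arguments, arguing in a similar way as in Theorem~\ref{mg}'', and your argument does exactly this --- decompose $M_g=I_g+J_g+\text{(rank one)}$ and invoke Theorems~\ref{jg}(b) and~\ref{ig0} for sufficiency, then pass to $B^{\beta}$ via Proposition~\ref{fpqs} and run the test-function argument (the ``little'' analogue of Lemma~\ref{mg-bloch}) for necessity. One cosmetic remark: the inclusion $B^{\beta-(n+1+\alpha)/t}_0\subset B^{\beta-(n+1+\alpha)/t}$ you mention is not needed, since Theorem~\ref{ig0}(i) already takes the $B_0$-condition as its hypothesis.
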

This follows,  using standard arguments, arguing in a similar way as in Theorem \ref{mg}. We omit the proof here.

\end{document}